%% file: main_arxiv.tex
\documentclass[11pt]{amsart}
\usepackage[utf8]{inputenc}
\usepackage{amssymb,amsmath}
\usepackage{graphicx}
\usepackage{color}
\usepackage{tikz}
\usepackage{pgfplots}
\usepackage{enumitem}
\usepackage{algorithm}
\usepackage{setspace}
\usepackage{algpseudocode}
\usepackage{mathtools}
\usepackage{xfrac}

\usepackage{accents}
\usepackage{multicol} 
\usepackage{soul}
\usepackage{subcaption}
\usepackage{booktabs}
\usepackage{array}
\usepackage{comment}
\newcolumntype{L}[1]{>{\raggedright\let\newline\\\arraybackslash\hspace{0pt}}m{#1}}
\newcolumntype{C}[1]{>{\centering\let\newline\\\arraybackslash\hspace{0pt}}m{#1}}
\newcolumntype{R}[1]{>{\raggedleft\let\newline\\\arraybackslash\hspace{0pt}}m{#1}}
\usepackage{siunitx}
\usepackage{bbold}

\usepackage[hidelinks]{hyperref}
\usepackage[nameinlink,noabbrev]{cleveref}
\newtheorem{theorem}{Theorem}

\theoremstyle{definition}

\newtheorem{example}[theorem]{Example}
\theoremstyle{lemma}
\newtheorem{lemma}[theorem]{Lemma}

\theoremstyle{remark}
\newtheorem{remark}[theorem]{Remark}

\newtheorem{assumption}[theorem]{Assumption}
\Crefname{assumption}{Assumption}{Assumptions}
\numberwithin{theorem}{section}
\numberwithin{equation}{section}
\numberwithin{table}{section}
\numberwithin{figure}{section}
\input{def}
\allowdisplaybreaks
\begin{document}
\title[Semi-explicit Time Discretization for Linear Thermo--poroelasticity]{Semi-explicit Time Discretization for\\ Linear Thermo--poroelasticity}
\author[]{R.~Altmann$^\dagger$, R.~Maier$^\ddagger$, J.~Schmeck$^\dagger$}
\address{${}^{\dagger}$ Institute of Analysis and Numerics, Otto von Guericke University Magdeburg, Universit\"atsplatz 2, 39106 Magdeburg, Germany}
\email{\{robert.altmann,jochewed.schmeck\}@ovgu.de}
\address{${}^{\ddagger}$ Institute for Applied and Numerical Mathematics, Karlsruhe Institute of Technology, Englerstr.~2, 76131 Karlsruhe, Germany}
\email{roland.maier@kit.edu}
%
\date{\today}
\keywords{}
%
%
\begin{abstract}
Within this paper, we introduce partially and fully decoupled time stepping schemes for linear thermo--poroelasticity. This means that the mechanics, heat, and flow equations can be solved sequentially. We provide sufficient conditions on the material parameters, which can be checked a priori, guaranteeing first-order convergence of the introduced schemes. Hence, the proposed methods have the same order as the implicit Euler scheme but are computationally more efficient due to the decoupling of the system equations. Numerical examples validate the proven convergence results and analyze the sharpness of the mentioned parameter condition. Further, we compare the schemes with other decoupling schemes from the literature.
\end{abstract}
%
%
\maketitle
\setcounter{tocdepth}{3}
%
{\tiny {\bf Key words.} thermo--poroelasticity, time discretization, decoupling, delay equation}\\
\indent
{\tiny {\bf AMS subject classifications.}  {\bf 65M12}, {\bf 65J15}, {\bf 76S05}} 
%
%
%
\section{Introduction}
This paper deals with the numerical treatment of thermo--poroelasticity, i.e., poroelastic applications where the influence of the temperature is taken into account such as in human geological activities. Here, the temperature has a great impact on the physical phenomena, which are the basis for CO$_2$ sequestration and geothermal energy production~\cite{AntBB23}. In geothermal engineering, the heat energy that is stored inside the earth is used to produce energy~\cite{Kim18}.  

In this paper, we consider the linear thermo--poroelastic model derived in~\cite{BruBNR18}. Starting from a micro-level model that is based on fluid--structure interaction as well as conservation principles for the solid and fluid phases, a two-scale expansion is used to derive a macroscopic homogenized model, involving also a nonlinear convective term. However, for small P\'eclet numbers (which is satisfied for high thermal conductivity or low flow velocity), this term can be neglected, resulting in a system of linear partial differential equations (PDEs) for the displacement of the solid, the pressure of the fluid, and the temperature as unknown variables. A theoretical study of the well-posedness for a corresponding six-field formulation, including the linear as well as the nonlinear case, is given in~\cite{BruANR19}. 
%
A similar homogenization idea as in~\cite{BruBNR18} has also been used in~\cite{DuiMWW19} to derive a thermo--poroelastic macro-model. Moreover, other approaches to obtain suitable macro-equations are presented, e.g., in~\cite{LeeM97} (which, however, relies on restrictive assumptions on the allowed deformation) and~\cite{BriBPR16} (where the matrix is assumed to be rigid). Generally, a complete study of the constitutive laws for the total stress tensor, the rate of change of fluid mass, and the rate of change of the energy is presented in~\cite[Ch.~4]{Cou04}. For an extension of the model to poroelastoplasticity with thermal effects, we refer to~\cite{Cou89}, whereas the fully dynamic model is considered in~\cite{BonBMA23}.  

In this work, we focus on the time discretization of the linear model. While classical (implicit) time discretizations lead to a fully coupled system of equations, our goal is to decouple the equations with appropriate semi-explicit schemes under suitable coupling conditions. These conditions are only based on the physical parameters and do not present a CFL-type time step restriction. More precisely, we aim at extending the semi-explicit scheme introduced in~\cite{AltMU21} (see also~\cite{AltM22,AltMU24}) for poroelasticity problems to thermo--poroelasticity. The main idea to obtain decoupled equations is to introduce a certain time-lagging related to the chosen time step in parts of the equations, resulting in a system of delay differential equations on the analytical level. The analysis of the semi-explicit schemes is based on the closeness of the delay equations to the original system of PDEs in combination with the analysis of a classical implicit time discretization scheme for the upcoming delay equations. Based on a splitting as in~\cite{KolVV14}, 
a similar decoupling approach is presented in~\cite{KolV17} but without any convergence analysis. 

Besides our approach, there exist multiple other strategies to decouple the equations of linear thermo--poroelasticity, mostly motivated by similar ideas used for poroelasticity, see~\cite{ArmS92,SimM92,KimTJ11a,KimTJ11b,MikW13}. These approaches have in common that -- in contrast to our strategy -- they are based on an additional inner iteration. Although no assumptions on the physical parameters are required, sufficiently large stabilization parameters are necessary. Moreover, convergence is only given up to a certain tolerance, which is fixed by the number of inner iteration steps. Actual examples are given in \cite{Kim18,BruABNR20,BalLY24} as well as~\cite{CaiLLL25} for the corresponding four-field formulation (similar to the three-field formulation in poroelasticity). 

The paper is structured as follows. In Section~\ref{sec:model}, we introduce the model of linear thermo--poroelasticity and its weak formulation. Afterwards, we consider a parabolic delay equation and discuss its discretization by the implicit Euler scheme in Section~\ref{sec:convergence}. This will be the basis for the convergence analysis for the half and fully decoupled schemes in Sections~\ref{sec:halfDecoupled} and~\ref{sec:fullyDecoupled}, respectively. Finally, we present several numerical experiments in Section~\ref{sec:num}. Here, we numerically analyze the assumptions on the physical parameters and compare our schemes to implicit and iterative methods. 

\subsection*{Notation} 
Within this paper, we use typical Matlab notation for vectors, i.e., we write $[x;y]$ for $[x^\top, y^\top]^\top$ with $x\in \R^n$, $y\in\R^m$. 
Moreover, for a Hilbert space $\mathcal{H}$ and the final time $T$, we abbreviate the Bochner spaces $L^\infty(0,T;\mathcal{H})$ and $W^{k,\infty}(0,T;\mathcal{H})$ by $L^\infty(\mathcal{H})$ and $W^{k,\infty}(\mathcal{H})$, respectively.
%
%
\section{Thermo--poroelasticity}\label{sec:model}
In this section, we introduce the model equations for the problem of linear thermo--poroelasticity, modeling the interplay of fluid flow, displacement of a surrounding porous material, and the temperature. As a simplifying assumption (which is, however, not necessary), we assume zero Dirichlet boundary conditions throughout this work. Further, we work with weak formulations only. For this, we introduce the Hilbert spaces 
\[
	\V \coloneqq [H^1_0(\Omega)]^d, \qquad
	\Q \coloneqq H^1_0(\Omega),  \qquad
	\cHV \coloneqq [L^2(\Omega)]^d,\qquad
	\cHQ \coloneqq  L^2(\Omega)
\]
including homogeneous Dirichlet boundary conditions, such that $\V, \cHV, \V^*$ and  $ \Q, \cHQ, \Q^*$ form Gelfand triples.

In the following subsection, we first consider the problem of linear poroelasticity (i.e., the coupling of displacement and fluid flow without the inclusion of temperature) before introducing the weak formulation of thermo--poroelasticity in Section~\ref{sec:model:thermo}. We later rephrase the latter model into a form that resembles poroelasticity, such that these considerations are an important basis.

%
\subsection{Classical poroelasticity}\label{sec:model:poro}
For the abstract formulation of the linear poroelasticity model, we introduce the bilinear forms~$a\colon\V\times\V\to\R$, $b\colon\Q\times\Q\to\R$, $c\colon\cHQ\times\cHQ\to\R$, and $d\colon\V\times\cHQ\to\R$ by 
\begin{align*}
	a(u,v) \coloneqq \int_\Omega \sigma(u) : \varepsilon(v) \dx, \qquad 
	&b(p,q) \coloneqq \int_\Omega \frac{\kappa}{\nu}\, \nabla p \cdot \nabla q \dx,\\
	c(p,q) \coloneqq \int_\Omega c_0\, p\, q \dx, \qquad 
	&d(u,q) \coloneqq \int_\Omega \alpha\, (\nabla \cdot u)\, q \dx 
\end{align*}
for $u,\, v \in \V$ and $p,\, q \in \Q$. Therein, $\sigma$ equals the stress tensor, which is defined through the Lamé coefficients $\lambda$ and $\mu$ as well as the symmetric gradient $\varepsilon(v) \coloneqq \frac{1}{2}\, (\nabla v + (\nabla v)^\top )$. That is, $\sigma(v) =2\mu \varepsilon(v) +\lambda \trace(\varepsilon(v))I$. Further, we use the classical double dot notation for matrices $A, B \in \R^{n\times m}$, i.e., $A : B = \trace(A^\top B)$. The physical constants are given by
\begin{itemize}[itemsep=0.2em]
	\item[] $\kappa$ -- permeability,
	\item[] $\nu$ -- fluid viscosity,
	\item[] $c_0$ -- inverse of the Biot modulus (often denoted by~$M$),
	\item[] $\alpha$ -- Biot--Willis fluid--solid coupling coefficient. 
\end{itemize}
The weak formulation of linear poroelasticity seeks abstract functions~$u\colon [0,T] \to \V$ for the displacement and $p\colon [0,T] \to \Q$ for the pressure such that
\begin{subequations}
\label{eq:poroBilinear}
	\begin{align}
		a(u, v) - d(v, p) 
		&= \langle\f, v\rangle, \label{eq:poroBilinear:a} \\
		d(\dot u, q) + c(\dot p, q) + b(p, q) 
		&= \langle\g, q\rangle \label{eq:poroBilinear:b} 
	\end{align}
\end{subequations}	
for all test functions $v \in \V$, $q \in \Q$. Introducing the corresponding operators~$\calA$, $\calB$, $\calC$, and~$\calD$, defined through $a$, $b$, $c$, and~$d$, respectively, system~\eqref{eq:poroBilinear} can also be written in the more compact operator form
\begin{subequations}
	\label{eq:poroOperator}
	\begin{align}
		\calA u - \calD^*p  
		&= \f, \label{eq:poroOperator:a} \\
		\calD \dot u + \calC \dot p + \calB p 
		&= \g. \label{eq:poroOperator:b} 
	\end{align}
\end{subequations}	
As initial condition, we consider $p(0)=p^0$ given, which then also determines $u(0)$ by~\eqref{eq:poroOperator:a}. In the case of poroelasticity, the operators have the following properties.
\begin{assumption}[operators I]
\label{ass:operatorsI}
The operators $\calA, \calB, \calC$, and $\calD$ satisfy
\begin{itemize}[itemsep=0.2em]
	\item $\calA\colon \V\to\V^*$ is continuous with constant $C_a$, symmetric, elliptic with constant $c_a$,
	\item $\calB\colon\Q\to\Q^*$ is continuous with constant $C_b$, symmetric, elliptic with constant $c_b$, 
	\item $\calC\colon\cHQ\to\cHQdual$ is continuous with constant $C_c$, symmetric, elliptic with constant $c_c$, 
	\item $\calD\colon\V\to\cHQdual$ is continuous with constant $C_d$.
\end{itemize}
\end{assumption}
\begin{remark}
Since $c$ equals -- up to the constant $c_0$ -- the $L^2$-inner product, we have $C_c = c_c = c_0$.    			
\end{remark}
Results on the unique solvability of the system in terms of a strong solution are shown in~\cite{Sho00}. For weak solutions, we have the following result.
\begin{theorem}[weak solution, poroelasticity]
\label{thm:existencePoro}
Let Assumption~\ref{ass:operatorsI} hold and consider initial data~$p(0)=p^0\in \cHQ$ together with right-hand sides~$\f\in H^1(0,T;\V^*)$, $\g\in L^2(0,T;\Q^*)$. Then there exists a unique (weak) solution pair
\[
	u \in L^2(0,T;\V), \qquad
	p \in L^2(0,T;\Q) \cap H^1(0,T;\Q^*).
\]
\end{theorem}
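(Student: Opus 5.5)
The plan is to eliminate the displacement and reduce \eqref{eq:poroOperator} to a single parabolic equation for $p$, to which the standard Lions theory for abstract evolution equations on the Gelfand triple $\Q,\cHQ,\Q^*$ applies.

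\textbf{Elimination of $u$.} Since $\calA$ is elliptic and continuous (Assumption~\ref{ass:operatorsI}), the Lax--Milgram lemma shows that $\calA\colon\V\to\V^*$ is invertible. Equation~\eqref{eq:poroOperator:a} therefore gives
\[
u = \calA^{-1}(\f + \calD^* p).
\]
Inserting this into~\eqref{eq:poroOperator:b}, where $\dot u = \calA^{-1}(\dot\f + \calD^*\dot p)$ is meaningful because $\f\in H^1(0,T;\V^*)$, we obtain
\[
(\calC + \calD\calA^{-1}\calD^*)\,\dot p + \calB p = \g - \calD\calA^{-1}\dot\f .
\]
The right-hand side lies in $L^2(0,T;\Q^*)$, because $\calD\calA^{-1}$ maps $\V^*$ continuously into $\cHQdual\hookrightarrow\Q^*$.

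\textbf{Properties of the reduced operator.} Set $\widetilde\calC \coloneqq \calC + \calD\calA^{-1}\calD^*\colon \cHQ\to\cHQdual$. It is bounded, with constant at most $C_c + C_d^2/c_a$. It is symmetric because $\calA^{-1}$ is symmetric. It is elliptic with constant $c_c$, since $\langle \calD\calA^{-1}\calD^* q,q\rangle = \langle \calA w, w\rangle \ge 0$ for $w=\calA^{-1}\calD^*q$. Hence $\widetilde\calC$ induces an inner product on $\cHQ$ that is equivalent to the standard one. We then substitute $\widetilde\calC^{1/2}$ as a change of inner product, or equivalently rewrite the problem as $\frac{\mathrm d}{\mathrm dt}(\widetilde\calC p) + \calB p = \tilde\g$. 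This puts the problem into the standard form of a linear parabolic equation with the elliptic, bounded operator $\calB\colon\Q\to\Q^*$ on the Gelfand triple $\Q\subset\cHQ\subset\Q^*$, where $\cHQ$ carries the $\widetilde\calC$-inner product.

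\textbf{Existence and uniqueness for $p$.} The classical theorem of Lions (via Galerkin approximation together with the energy estimate obtained by testing with $p$) gives a unique $p\in L^2(0,T;\Q)\cap H^1(0,T;\Q^*)$ with $p(0)=p^0\in\cHQ$. We then recover $u=\calA^{-1}(\f+\calD^*p)\in L^2(0,T;\V)$, which also has $\dot u\in L^2(0,T;\V)$ in the $\Q^*$-sense needed. Uniqueness of the pair follows because any solution pair must satisfy the reduced equation.

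\textbf{Main obstacle.} The delicate step is to justify the time derivative of $\widetilde\calC p$ when only $p\in H^1(0,T;\Q^*)$ is known. The operator $\widetilde\calC$ is defined on $\cHQ$, not on $\Q^*$, so the meaning of $\widetilde\calC\dot p$ and of the initial condition have to be interpreted carefully. I would handle this by working with the equivalent inner product on $\cHQ$ and its induced Gelfand triple, in which $\widetilde\calC$ becomes the Riesz map. This requires checking that $\widetilde\calC$ extends to an isomorphism $\Q^*\to\Q^*$ compatible with the triple, which holds because it is a bounded coercive perturbation of a multiple of the identity.
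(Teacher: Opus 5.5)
\textbf{Verdict: same route as the paper, but with a genuine gap.} Your reduction is the paper's proof. You eliminate $u=\calA^{-1}(f+\calD^*p)$ and obtain $\calM\dot p+\calB p=g-\calD\calA^{-1}\dot f$ with $\calM=\calC+\calD\calA^{-1}\calD^*$ (your $\widetilde\calC$). You then note that $\calM$ is symmetric and coercive on $\cHQ$ and invoke Lions' theorem; the paper simply stops at that citation.

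You rightly notice that this does not by itself give $\dot p\in L^2(0,T;\Q^*)$. In the triple where $\cHQ$ carries the $\calM$-inner product, Lions' theorem yields $p\in L^2(0,T;\Q)\cap C([0,T];\cHQ)$ with $\frac{\mathrm d}{\mathrm dt}(\calM p)\in L^2(0,T;\Q^*)$. Passing from this to $\dot p$ requires $\calM^{-1}$ to act boundedly on $\Q^*$.

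The gap is your justification of that step. Being a bounded coercive perturbation of $c_0 I$ on $\cHQ$ says nothing about $\Q^*$. A symmetric operator on $\cHQ$ extends boundedly to $\Q^*$ if and only if it maps $\Q$ boundedly into $\Q$, and in general neither $\calM$ nor $\calM^{-1}$ does so. Take $d=1$, $\Omega=(0,1)$ and constant coefficients. Then $\calD\calA^{-1}\calD^*q=\kappa_0\,(q-\bar q)$ with $\kappa_0=\alpha^2/(\lambda+2\mu)>0$ and $\bar q$ the mean of $q$. Hence $\calM q=(c_0+\kappa_0)q-\kappa_0\bar q$ and $\calM^{-1}q=\frac{q}{c_0+\kappa_0}+(\frac{1}{c_0}-\frac{1}{c_0+\kappa_0})\bar q$. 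For $q\in H^1_0(0,1)$ with $\bar q\neq 0$, neither image vanishes at the endpoints.

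This cannot be repaired, because the asserted regularity is false under the stated hypotheses. In the same example take $f=0$, $g=0$ and $p^0\equiv 1\in\cHQ$. The reduced equation reads $\frac{\mathrm d}{\mathrm dt}\big((c_0+\kappa_0)p-\kappa_0\bar p\big)=\frac{\kappa}{\nu}\,p''\in L^2(0,T;\Q^*)$. So $\dot p\in L^2(0,T;\Q^*)$ would force $\frac{\mathrm d}{\mathrm dt}\bar p\in L^2(0,T)$. But integrating over $(0,1)$ gives, for $t>0$, $c_0\frac{\mathrm d}{\mathrm dt}\bar p=\frac{\kappa}{\nu}\big(p'(1)-p'(0)\big)$. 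This is the boundary flux produced by initial data that do not vanish on $\partial\Omega$, and it behaves like $t^{-1/2}$ as $t\to 0$, just as for the heat equation with incompatible data. A Laplace transform computation confirms this: the transform of $\frac{\mathrm d}{\mathrm dt}\bar p$ decays only like $s^{-1/2}$.

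For $p^0\in\cHQ$, the correct statement is what Lions' theorem delivers: $p\in L^2(0,T;\Q)\cap C([0,T];\cHQ)$ with $\frac{\mathrm d}{\mathrm dt}(\calC p+\calD u)\in L^2(0,T;\Q^*)$. This is also the sense in which \eqref{eq:poroOperator:b} must be read. Your remark on $\dot u$ fails for the same reason, since $\calD^*\dot p$ only makes sense for $\dot p\in\cHQ$. To obtain $p\in H^1(0,T;\Q^*)$ you need stronger data, e.g.\ $p^0\in\Q$ and $g\in L^2(0,T;\cHQ)$. Testing with $\dot p$ at the Galerkin level then even gives $\dot p\in L^2(0,T;\cHQ)$.
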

\begin{proof}
With~\eqref{eq:poroOperator:a} we first eliminate the variable $u = \calA^{-1}(f+\calD^*p)$, leading to 
\[
	\calM \dot p + \calB p 
	= \tilde g
	\coloneqq g - \calD\calA^{-1} \dot f
\]
with $\calM \coloneqq \calC + \calD\calA^{-1}\calD^*$. Due to the assumptions on $\calA$ and $\calC$, also $\calM$ defines an inner product on $\cHQ$. Moreover, $\calB$ is elliptic, and the right-hand side satisfies~$\tilde g \in L^2(0,T;\Q^*)$. Hence, we have a parabolic equation which yields a unique solution~$p$ in the asserted space; see~\cite[Ch.~3, Sect.~4.4]{LioM72}. Equation~\eqref{eq:poroOperator:a} further implies that~$u \in L^2(0,T;\V)$.
\end{proof}
%
%
\subsection{Linear thermo--poroelasticity}\label{sec:model:thermo}

We now add a third variable to the system, modeling the temperature. Following~\cite{Bru19,BruANR19} restricted to the linear case, we introduce additional bilinear forms~$\bb\colon\Q\times\Q\to\R$, $\cc, \ccc\colon\cHQ\times\cHQ\to\R$ and $\dd\colon\V\times\cHQ\to\R$ defined by
\begin{align*}
	\bb(\theta,\zeta) 
	\coloneqq \int_\Omega \widetilde \kappa\, \nabla \theta \cdot \nabla \zeta \dx, \qquad
	&\dd(u,\zeta) 
	\coloneqq \int_\Omega \beta\, (\nabla \cdot u)\, \zeta \dx, \\  
	\cc(\theta,q) 
	\coloneqq \int_\Omega \cc_0\, \theta\, q \dx, \qquad
	&\ccc(\theta, \zeta) 
	\coloneqq \int_\Omega \ccc_0\, \theta\, \zeta \dx,
\end{align*}
where the physical constants are given by
\begin{itemize}[itemsep=0.2em]
	\item[] $\widetilde \kappa$ -- thermal conductivity, 
	\item[] $\beta$ -- thermal stress coefficient,
	\item[] $\cc_0$ -- thermal dilatation coefficient,
	\item[] $\ccc_0$ -- thermal capacity. 
\end{itemize}
The linear problem of thermo--poroelasticity in weak form then reads: seek the displacement~$u\colon [0,T] \to \V$, the pressure~$p\colon [0,T] \to \Q$, and the temperature~$\theta\colon [0,T] \to \Q$ such that
\begin{subequations}
\label{eq:thermoporoBilinear}
\begin{align}
	a(u, v) - d(v, p) - \dd(v, \theta) 
	&= \langle\f, v\rangle, \label{eq:thermoporoBilinear:a} \\
	d(\dot u, q) + c(\dot p, q) - \cc(\dot \theta,q) + b(p, q) 
	&= \langle\g, q\rangle, \label{eq:thermoporoBilinear:b} \\
	\dd(\dot u, \zeta) + \ccc(\dot \theta, \zeta) - \cc(\dot p, \zeta) + \bb(\theta, \zeta) 
	&= \langle\h, \zeta\rangle
	\label{eq:thermoporoBilinear:c} 
\end{align}
\end{subequations}	
for all test functions $v \in \V,\, q, \zeta \in \Q$. The initial conditions read~$p(0)=p^0$ and~$\theta(0)=\theta^0$, which again determines $u(0)$ by~\eqref{eq:thermoporoBilinear:a}. 
\begin{remark}
In order to model more general boundary conditions, one may also define different solution spaces for $p$ and $\theta$. Here, however, we only consider homogeneous Dirichlet boundary conditions such that $\Q$ is suitable as space for both variables. 
\end{remark}
As before, the bilinear forms define operators, which we denote by $\calBB, \calCC, \calCCC$, and $\calDD$, respectively. With these operators, system~\eqref{eq:thermoporoBilinear} can be written in the form
\begin{subequations}
\label{eq:thermoporoOperator}
\begin{align}
	\calA u - \calD^*p - \calDD^* \theta 
	&= \f, \label{eq:thermoporoOperator:a} \\
	\calD \dot u + \calC \dot p - \calCC \dot \theta + \calB p 
	&= \g, \label{eq:thermoporoOperator:b} \\
	\calDD \dot u + \calCCC \dot \theta - \calCC \dot p + \calBB \theta 
	&= \h. \label{eq:thermoporoOperator:c} 
\end{align}
\end{subequations}	
We summarize properties of the newly introduced operators in the following assumption. 
\begin{assumption}[operators II]
\label{ass:operatorsII}
The operators $\calBB, \calCC, \calCCC$, and $\calDD$ satisfy
\begin{itemize}[itemsep=0.2em]
	\item $\calBB\colon\Q\to\Q^*$ is continuous with constant $C_{\bb}$, symmetric, elliptic with constant $c_{\bb}$, 
	\item $\calCC\colon\cHQ\to\cHQdual$ is continuous with constant $C_{\cc}$, symmetric, elliptic with constant $c_{\cc}$, 
	\item $\calCCC\colon\cHQ\to\cHQdual$ is continuous with constant $C_{\ccc}$, symmetric, elliptic with constant $c_{\ccc}$, 	
	\item $\calDD\colon\V\to\cHQdual$ is continuous with constant $C_{\dd}$.
\end{itemize}
\end{assumption}
\begin{remark}
Since $\cc$ and $\ccc$ are again $L^2$-inner products only scaled by a constant, we have $C_{\cc} = c_{\cc} = \cc_0$ and $C_{\ccc} = c_{\ccc} = \ccc_0$. 
\end{remark}
As in~\cite{BruABNR20} we make the following assumption.  
\begin{assumption}[ellipticity constants]
\label{ass:chat}
The constants $c_0$, $\cc_0$, and $\ccc_0$ satisfy
\[
	\cc_0 
	< \ccc_0
	\qquad\text{as well as}\qquad
	\cc_0 
	< c_0.
\] 
\end{assumption}
For the following analysis, we define the operator matrices
\[
	\bbB\colon \Q^2 \to (\Q^*)^2, \qquad 
	\bbC\colon \cHQsquare \to (\cHQdual)^2, \qquad 
	\bbD\colon \V \to (\cHQdual)^2
\]
given by
\[
	\bbB
	\coloneqq \begin{bmatrix} \calB & \\ & \calBB \end{bmatrix}, \qquad
	\bbC
	\coloneqq \begin{bmatrix} \phantom{-}\calC & -\calCC\ \ \\ -\calCC & \phantom{-}\calCCC\ \ \end{bmatrix}, \qquad
	\bbD
	\coloneqq \begin{bmatrix} \calD \\ \calDD \end{bmatrix}.
\]
The composed function spaces are equipped with the standard norms 
\[
	\| \qbf \|_{\Q^2}
	= \sqrt{\| q_1\|_{\Q}^2 + \| q_2\|_{\Q}^2}, \qquad 
	\| \vbf \|_{\cHQsquare}
	= \sqrt{\| v_1\|_{\cHQ}^2 + \| v_2\|_{\cHQ}^2},
\]
where $q_1,\,v_1$ and $q_2,\,v_2$ are the first and second components of $\qbf \in \Q^2$ and $\vbf \in \cHQ^2$, respectively.
Resulting stability and ellipticity estimates are summarized in the following lemma. 
\begin{lemma}[properties of $\bbB$, $\bbC$, and $\bbD$]
\label{lem:propertiesBlockOperators}
Let Assumptions~\ref{ass:operatorsI}, \ref{ass:operatorsII}, and~\ref{ass:chat} be satisfied. Then the operators $\bbB$, $\bbC$, and $\bbD$ are continuous with stability constants 
\begin{align*}
	C_{\bbB} 
	&\le \max(C_{b}, C_{\bb}), \\
	C_{\bbC} 
	&\le \max\Big(\sqrt{c_0^2  +\cc_0^2+ c_0  \cc_0+ \ccc_0\cc_0}, \sqrt{\cc_0^2  + \ccc_0^2+ c_0  \cc_0+ \ccc_0\cc_0}\Big)
	\le \sqrt{2c_0^2  + 2\ccc_0^2}, \\
	C_{\bbD} 
	&\le \sqrt{C_{d}^2 + C_{\dd}^2}.
\end{align*}
Moreover, the operators $\bbB$ and $\bbC$ are symmetric and elliptic with constants
\[
	c_{\bbB} \ge \min(c_{b},  c_{\bb}), \qquad 
	c_{\bbC} \ge \min\big(c_0-\cc_0, \ccc_0-\cc_0\big).
\]
\end{lemma}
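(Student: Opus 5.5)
The plan is to treat each block operator componentwise, using only the scalar constants from Assumptions~\ref{ass:operatorsI} and~\ref{ass:operatorsII}, the Cauchy--Schwarz inequality, and Young's inequality in the form $2|(v_1,v_2)_{\cHQ}| \le \|v_1\|_{\cHQ}^2 + \|v_2\|_{\cHQ}^2$. Assumption~\ref{ass:chat} is needed only for the final simplified bound on $C_{\bbC}$ and for the positivity of $c_{\bbC}$.

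For $\bbB$ and $\bbD$ everything is direct. Since $\bbB$ is block diagonal, we have $\|\bbB\qbf\|^2 = \|\calB q_1\|^2 + \|\calBB q_2\|^2 \le \max(C_b,C_{\bb})^2\|\qbf\|_{\Q^2}^2$. Symmetry is inherited from $\calB$ and $\calBB$, and
\[
\langle \bbB\qbf,\qbf\rangle \ge c_b\|q_1\|_\Q^2 + c_{\bb}\|q_2\|_\Q^2 \ge \min(c_b,c_{\bb})\,\|\qbf\|_{\Q^2}^2 .
\]
For $\bbD$, we have $\|\bbD v\|^2 = \|\calD v\|^2 + \|\calDD v\|^2 \le (C_d^2 + C_{\dd}^2)\|v\|_\V^2$.

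For $\bbC$, I would use that all entries are scaled $L^2$ inner products and identify $\cHQdual$ with $\cHQ$. The first component of $\bbC\vbf$ is $c_0 v_1 - \cc_0 v_2$, and its squared norm is
\[
c_0^2\|v_1\|^2 + \cc_0^2\|v_2\|^2 - 2c_0\cc_0(v_1,v_2).
\]
The second component is $-\cc_0 v_1 + \ccc_0 v_2$, with squared norm
\[
\cc_0^2\|v_1\|^2 + \ccc_0^2\|v_2\|^2 - 2\cc_0\ccc_0(v_1,v_2).
\]
Bounding both mixed terms by Young's inequality and summing, the coefficient of $\|v_1\|^2$ is $c_0^2 + \cc_0^2 + c_0\cc_0 + \ccc_0\cc_0$. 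The coefficient of $\|v_2\|^2$ is $\cc_0^2 + \ccc_0^2 + c_0\cc_0 + \ccc_0\cc_0$. Taking the maximum of the two coefficients yields the first bound.

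For the cruder bound $\sqrt{2c_0^2 + 2\ccc_0^2}$, I use Assumption~\ref{ass:chat} in the form $\cc_0 \le m \coloneqq \min(c_0,\ccc_0)$. Take the first coefficient and split cases. If $m = \ccc_0$, it is at most $c_0^2 + 2\ccc_0^2 + c_0\ccc_0$, which is bounded by $2c_0^2 + 2\ccc_0^2$ because $\ccc_0 \le c_0$. If $m = c_0$, it is at most $3c_0^2 + c_0\ccc_0$. This is at most $2c_0^2 + 2\ccc_0^2$ because $c_0^2 + c_0\ccc_0 \le 2\ccc_0^2$ when $c_0 \le \ccc_0$. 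The second coefficient is handled symmetrically. This case analysis is the only mildly delicate step, since naive estimates such as $\cc_0 \le c_0$ alone overshoot.

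For ellipticity, symmetry of $\bbC$ is clear because the off-diagonal blocks are both $-\calCC$ and $\calCC$ is symmetric. Then
\[
\langle\bbC\vbf,\vbf\rangle = c_0\|v_1\|^2 + \ccc_0\|v_2\|^2 - 2\cc_0(v_1,v_2),
\]
and Young's inequality on the mixed term gives the lower bound $(c_0-\cc_0)\|v_1\|^2 + (\ccc_0-\cc_0)\|v_2\|^2$. This is at least $\min(c_0-\cc_0,\ccc_0-\cc_0)\|\vbf\|^2$, which is positive by Assumption~\ref{ass:chat}.
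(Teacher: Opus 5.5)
Your proposal is correct and follows the paper's proof essentially line by line: componentwise bounds for $\bbB$ and $\bbD$, expanding $\|\bbC\qbf\|^2$ and splitting the mixed terms by Young's inequality to obtain the two coefficients, and the same Young-type lower bound for the ellipticity of $\bbC$. The one place you go beyond the paper is the case distinction on $\min(c_0,\ccc_0)$ for the cruder bound $\sqrt{2c_0^2+2\ccc_0^2}$, which the paper only asserts from Assumption~\ref{ass:chat}; your case analysis is valid and fills that step.
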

\begin{proof}
For $\bbB$ we have with $\qbf = [q_1; q_2] \in \Q^2$, 
\[
	\|\bbB \qbf \|_{(\Q^*)^2} 
	= \sqrt{\| \calB q_1\|_{\Q^*}^2 + \| \calBB q_2\|_{\Q^*}^2} 
	\le \sqrt{C_{b}^2\, \|q_1\|_{\Q}^2 + C_{\bb}^2\, \|q_2\|_{\Q}^2}.
\]
Hence, the continuity constant~$C_{\bbB}$ is bounded by $\max(C_{b},  C_{\bb})$. Further, we note that 
\[
	\langle \bbB\qbf, \qbf\rangle
	= \langle \calB q_1, q_1\rangle + \langle \calBB q_2, q_2\rangle
	\ge c_{b}\, \|q_1\|^2_\Q + c_{\bb}\, \|q_2\|^2_\Q
	\ge \min(c_{b},  c_{\bb})\, \|\qbf\|^2_{\Q^2}.
\]
For the operator $\bbD$, we directly obtain 
\begin{align*}
	\|\bbD q\|_{(\cHQdual)^2} 
	= \sqrt{\| \calD q\|_{\cHQdual}^2 + \| \calDD q\|_{\cHQdual}^2}
	\le \sqrt{C_{d}^2\, \| q\|_{\V}^2 + C_{\dd}^2\, \| q\|_{\V}^2}
	=\sqrt{C_{d}^2 + C_{\dd}^2}\ \|q\|_{\V}.
\end{align*}
Finally, with $\qbf = [q_1; q_2] \in \cHQsquare$, we obtain for $\bbC$
\begin{align*}
	\|\bbC \qbf \|_{(\cHQdual)^2} 
	&= \sqrt{ \| \calC q_1 -\calCC q_2\|_{\cHQdual}^2 + \|\calCCC q_2 -\calCC q_1  \|_{\cHQdual}^2 } \\
	&\le \sqrt{ (c_0^2  +\cc_0^2) \| q_1  \|_{\cHQ}^2 + 2\,  (c_0  \cc_0+ \ccc_0\cc_0) \| q_2\|_{\cHQ}  \|q_1  \|_{\cHQ} +(\cc_0^2  + \ccc_0^2) \|q_2\|_{\cHQ}^2}\\
	&\le \sqrt{ (c_0^2  +\cc_0^2+ c_0  \cc_0+ \ccc_0\cc_0) \| q_1  \|_{\cHQ}^2  +(\cc_0^2  + \ccc_0^2+ c_0  \cc_0+ \ccc_0\cc_0) \|q_2\|_{\cHQ}^2} \\
	&\le \max\Big(\sqrt{c_0^2  +\cc_0^2+ c_0  \cc_0+ \ccc_0\cc_0}, \sqrt{\cc_0^2  + \ccc_0^2+ c_0  \cc_0+ \ccc_0\cc_0} \Big)\, \|\qbf\|_{\cHQsquare}. 
\end{align*}
Using Assumption~\ref{ass:chat}, this can be further bounded by 
\[
	\|\bbC \qbf \|_{(\cHQdual)^2}
	\le \max\Big(\sqrt{2c_0^2 + 2\ccc_0^2}, \sqrt{2c_0^2 + 2\ccc_0^2}\Big)\, \|\qbf\|_{\cHQsquare}
	\le \sqrt{ 2c_0^2+2\ccc_0^2}\ \|\qbf\|_{\cHQsquare}.
\]
For the ellipticity of~$\bbC$, we compute 
\begin{align*}
	\langle \bbC\qbf, \qbf\rangle 
	&= \langle \calC q_1, q_1\rangle - \langle \calCC q_1, q_2 \rangle - \langle \calCC q_2, q_1\rangle + \langle \calCCC q_2, q_2\rangle\\
	&\geq c_0\, \|q_1\|^2_{\cHQ} - 2\, \cc_0\, \|q_1\|_{\cHQ}\|q_2\|_{\cHQ} + \ccc_0\, \|q_2\|^2_{\cHQ} \\
	&\geq (c_0 - \cc_0) \|q_1\|^2_{\cHQ} +  (\ccc_0  -\cc_0) \|q_2\|^2_{\cHQ}\\
	&\geq \min\big(c_0-\cc_0, \ccc_0-\cc_0\big) \|\qbf\|^2_{\cHQsquare}.
	\qedhere
\end{align*} 
\end{proof}
The following theorem shows the existence of a unique weak solution for linear thermo--poroelasticity. 
\begin{theorem}[weak solution, thermo--poroelasticity]
\label{thm:existenceThermoPoro}
Consider Assumptions~\ref{ass:operatorsI}, \ref{ass:operatorsII}, and~\ref{ass:chat}. Further assume initial data~$p(0)=p^0, \theta(0)=\theta^0\in \cHQ$ as well as right-hand sides~$\f\in H^1(0,T;\V^*)$, $\g\in L^2(0,T;\Q^*)$, and~$\h\in L^2(0,T;\Q^*)$. Then there exists a unique (weak) solution triple
\[
	u \in L^2(0,T;\V), \qquad
	p, \theta \in L^2(0,T;\Q) \cap H^1(0,T;\Q^*).
\]
\end{theorem}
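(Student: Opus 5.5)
The plan is to mimic the proof of Theorem~\ref{thm:existencePoro}, now using the block operators $\bbB$, $\bbC$, $\bbD$. Collecting $\mathbf{p} \coloneqq [p;\theta]$, system~\eqref{eq:thermoporoOperator} becomes
\[
	\calA u - \bbD^*\mathbf{p} = \f, \qquad
	\bbD \dot u + \bbC \dot{\mathbf{p}} + \bbB \mathbf{p} = [\g;\h].
\]
This has exactly the structure of~\eqref{eq:poroOperator}. Since $\calA$ is elliptic and continuous, the Lax--Milgram lemma lets us eliminate $u = \calA^{-1}(\f + \bbD^*\mathbf{p})$. This yields the parabolic system
\[
	\mathbb{M}\dot{\mathbf{p}} + \bbB\mathbf{p} = \tilde{\mathbf{g}}
	\coloneqq [\g;\h] - \bbD\calA^{-1}\dot \f,
	\qquad
	\mathbb{M} \coloneqq \bbC + \bbD\calA^{-1}\bbD^*.
\]

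Next we verify the hypotheses of the abstract parabolic theory of~\cite[Ch.~3, Sect.~4.4]{LioM72} on the Gelfand triple $\Q^2\subset \cHQsquare \subset (\Q^*)^2$.
\begin{itemize}
	\item By Lemma~\ref{lem:propertiesBlockOperators}, $\bbC$ is bounded, symmetric, and elliptic on $\cHQsquare$ with constant at least $\min(c_0-\cc_0,\ccc_0-\cc_0)>0$. Positivity here is exactly where Assumption~\ref{ass:chat} enters.
	\item The operator $\bbD\calA^{-1}\bbD^*$ is bounded on $\cHQsquare$, because $\bbD\colon\V\to(\cHQdual)^2$ is continuous.
	\item It is also symmetric and positive semidefinite, since $\langle \bbD\calA^{-1}\bbD^*\mathbf{q},\mathbf{q}\rangle = \langle \calA w, w\rangle \ge 0$ with $w=\calA^{-1}\bbD^*\mathbf{q}$.
	\item Consequently $\mathbb{M}$ is symmetric, bounded, and elliptic on $\cHQsquare$, so it induces an inner product equivalent to the standard one.
	\item $\bbB$ is symmetric, bounded, and $\Q^2$-elliptic by the same lemma.
	\item The right-hand side satisfies $\tilde{\mathbf{g}}\in L^2(0,T;(\Q^*)^2)$, because $\dot\f\in L^2(0,T;\V^*)$ and $\bbD\calA^{-1}$ maps $\V^*$ continuously into $(\cHQdual)^2\hookrightarrow(\Q^*)^2$.
\end{itemize}

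To apply the parabolic theory, we reformulate the problem in the Gelfand triple with pivot space $\cHQsquare$ equipped with the $\mathbb{M}$-inner product. Equivalently, we substitute and consider $\dot{\mathbf{p}} + \mathbb{M}^{-1}\bbB\mathbf{p} = \mathbb{M}^{-1}\tilde{\mathbf{g}}$ with respect to that inner product. Here $\mathbb{M}^{-1}$ is extended to $(\Q^*)^2$ as the Riesz-type map associated with the equivalent inner product. This gives a unique $\mathbf{p}\in L^2(0,T;\Q^2)\cap H^1(0,T;(\Q^*)^2)$ with $\mathbf{p}(0)=[p^0;\theta^0]\in\cHQsquare$. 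Finally, $u=\calA^{-1}(\f+\bbD^*\mathbf{p})\in L^2(0,T;\V)$, since $\bbD^*\mathbf{p}\in L^2(0,T;\V^*)$. Uniqueness follows from uniqueness for the reduced parabolic problem together with the explicit formula for $u$.

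The main obstacle I expect is the justification that the equivalent inner product induced by $\mathbb{M}$ on the pivot space is compatible with the duality pairing between $\Q^2$ and $(\Q^*)^2$. This is precisely the step handled implicitly in Theorem~\ref{thm:existencePoro}. I would handle it by a Galerkin/energy argument directly, testing with $\mathbf{p}$ and using $\tfrac{d}{dt}\tfrac12\langle\mathbb{M}\mathbf{p},\mathbf{p}\rangle = \langle \mathbb{M}\dot{\mathbf{p}},\mathbf{p}\rangle$. Combining this with the ellipticity of $\bbB$ gives the a priori bounds, and the equation itself then controls $\dot{\mathbf{p}}$ in $L^2(0,T;(\Q^*)^2)$.
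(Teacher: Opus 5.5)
Your proposal is correct and follows essentially the paper's route. The paper also rewrites the system in block form with $\pbf=[p;\theta]$ and notes that $\calA,\bbB,\bbC,\bbD$ satisfy Assumption~\ref{ass:operatorsI} with $\Q,\cHQ$ replaced by $\Q^2,\cHQsquare$; the only non-trivial point is the ellipticity of $\bbC$ from Lemma~\ref{lem:propertiesBlockOperators}, which is where Assumption~\ref{ass:chat} enters. It then simply invokes Theorem~\ref{thm:existencePoro} rather than redoing the elimination of $u$ and the parabolic argument inline. So the pivot-space subtlety you flag is not a new obstacle here; it is the step already absorbed into the proof of Theorem~\ref{thm:existencePoro}. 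Note also that your sketched energy argument would only control $\mathbb{M}\dot{\pbf}$, not $\dot{\pbf}$, in $L^2(0,T;(\Q^*)^2)$, so it would not resolve that subtlety by itself.
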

\begin{proof}
With the solution vector~$\pbf \coloneqq [p; \theta]$ and right-hand side $\gbf \coloneqq [\g; \h]$, system~\eqref{eq:thermoporoOperator} can be written as 
\begin{subequations}
\label{eq:theromoporoBlockFormulation}
\begin{align}
	\calA u - \bbD^*\pbf  
	&= \f, \\
	\bbD \dot u + \bbC \dot{\pbf} + \bbB \pbf 
	&= \mathbf{\g}. 
\end{align}
\end{subequations}
Note that this system has the same structure as linear poroelasticity. In order to apply the existence result of Theorem~\ref{thm:existencePoro}, we need to show that the operators~$\calA, \bbB, \bbC$, and~$\bbD$ satisfy Assumption~\ref{ass:operatorsI} if we replace the spaces~$\Q$ and $\cHQ$ by $\Q^2$ and $\cHQsquare$, respectively. For this, the only non-trivial property is the ellipticity of~$\bbC$, which follows from Lemma~\ref{lem:propertiesBlockOperators}.
\end{proof}
%
%
\subsection{Implicit time discretization}\label{sec:model:implicitDisc}
We close this section with the introduction of an implicit and fully coupled time discretization. More precisely, we apply the classical implicit Euler scheme to system~\eqref{eq:thermoporoOperator} based on an equidistant partition of the time interval with step size~$\tau$. Multiplying the second and first equation by~$\tau$, it reads 
\begin{align}
\label{eq:impliciEuler}	
	\begin{bmatrix} 
		\calA & -\calD^* & -\calDD^* \\ 
		\calD & \calC + \tau \calB & - \calCC\phantom{*} \\ 
		\calDD & -\calCC \phantom{*}& \calCCC + \tau \calBB 
	\end{bmatrix}
	\begin{bmatrix} 
		u^{n+1} \\ 
		p^{n+1} \\ 
		\theta^{n+1} 
	\end{bmatrix}
	= 
	\begin{bmatrix}
		0 & 0 & 0 \\ 
		\calD & \phantom{+}\calC & -\calCC \\ 
		\calDD & -\calCC & \phantom{+}\calCCC 
	\end{bmatrix}
	\begin{bmatrix} 
		u^{n} \\ 
		p^{n} \\ 
		\theta^{n} 
	\end{bmatrix}
	+ 
	\begin{bmatrix} 
		\phantom{\tau}\f^{n+1} \\ 
		\tau \g^{n+1} \\ 
		\tau \h^{n+1} 
	\end{bmatrix}.
\end{align}
\begin{theorem}[well-posedness of the implicit Euler scheme]
Given Assumption~\ref{ass:chat}, the iteration matrix of the implicit Euler scheme is invertible.
\end{theorem}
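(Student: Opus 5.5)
We will show that the block operator on the left-hand side of~\eqref{eq:impliciEuler}, viewed as a map $\V\times\Q^2\to\V^*\times(\Q^*)^2$, is bijective. In the block notation from the proof of Theorem~\ref{thm:existenceThermoPoro}, with $\pbf=[p;\theta]$, the system matrix reads
\[
	\mathcal{K}_\tau \coloneqq \begin{bmatrix} \calA & -\bbD^* \\ \bbD & \bbC+\tau\bbB \end{bmatrix}.
\]
The natural approach is a Schur complement argument. By Assumption~\ref{ass:operatorsI}, $\calA$ is elliptic and continuous, so it is invertible by Lax--Milgram. For given data $[\f;\mathbf{r}]$, the first row yields $u=\calA^{-1}(\f+\bbD^*\pbf)$. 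Inserting this into the second row gives the reduced equation
\[
	\big(\bbC+\bbD\calA^{-1}\bbD^* + \tau\bbB\big)\,\pbf = \mathbf{r}-\bbD\calA^{-1}\f
\]
in $(\Q^*)^2$. Solvability of the full system is therefore equivalent to invertibility of the Schur complement $\mathcal{S}_\tau\coloneqq \bbC+\bbD\calA^{-1}\bbD^*+\tau\bbB\colon\Q^2\to(\Q^*)^2$.

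The key step is to show that $\mathcal{S}_\tau$ is continuous and elliptic on $\Q^2$, so that Lax--Milgram applies.

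\emph{Continuity.} This follows from Lemma~\ref{lem:propertiesBlockOperators} together with the continuous embedding $\Q\hookrightarrow\cHQ$, which controls $\bbC$ and $\bbD\calA^{-1}\bbD^*$ in the $\Q$-norm.

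\emph{Ellipticity.} For $\qbf\in\Q^2$ we estimate
\[
	\langle \mathcal{S}_\tau\qbf,\qbf\rangle
	= \langle\bbC\qbf,\qbf\rangle + \langle \calA^{-1}\bbD^*\qbf,\bbD^*\qbf\rangle + \tau\langle\bbB\qbf,\qbf\rangle
	\ge \tau\, c_{\bbB}\,\|\qbf\|_{\Q^2}^2 .
\]
Here the first term is nonnegative, indeed $\ge c_{\bbC}\|\qbf\|^2_{\cHQsquare}$, by Lemma~\ref{lem:propertiesBlockOperators}. The second term is nonnegative because $\calA^{-1}$ is symmetric and elliptic. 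The third term is bounded below by ellipticity of $\bbB$.

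This is exactly where Assumption~\ref{ass:chat} enters. Without it, $\bbC$ might be indefinite ($\cc_0$ too large), and the coercivity estimate could fail for small $\tau$.

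It remains to assemble the argument. Lax--Milgram gives a unique $\pbf$, hence a unique $u$, so $\mathcal{K}_\tau$ is bijective. Continuity of the inverse follows from the explicit construction, or from the bounded inverse theorem.

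I expect no serious obstacle. The only delicate point is checking the sign of the off-diagonal coupling $-\calCC$ in $\bbC$ via the estimate $2\cc_0\|q_1\|\|q_2\|\le \cc_0(\|q_1\|^2+\|q_2\|^2)$, and this is already done in Lemma~\ref{lem:propertiesBlockOperators}. One should also note that the skew structure $(-\bbD^*,\bbD)$ produces the positive term $\bbD\calA^{-1}\bbD^*$ in the Schur complement rather than a negative one.
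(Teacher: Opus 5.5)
Your proof is correct. Its first half coincides with the paper's: both eliminate $u$ through the Schur complement with respect to $\calA$, and both use that $\bbD\calA^{-1}\bbD^*$ is positive semi-definite.

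The second half takes a different route. The paper folds $\tau\bbB$ into the diagonal blocks ($\calC_\tau=\calC+\tau\calB$, $\calCCC_\tau=\calCCC+\tau\calBB$) and applies a second Schur complement to the resulting $2\times2$ block. This reduces everything to positivity of $\calCCC_\tau-\calCC\calC_\tau^{-1}\calCC$, which the paper in turn reduces to positivity of $\calCCC-\calCC\calC^{-1}\calCC$. That isolates the algebraic condition that actually matters, $\cc_0^2<c_0\ccc_0$, which is weaker than Assumption~\ref{ass:chat}, and it also gives an explicit block factorization of the iteration matrix.

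You instead keep $\bbC$ and $\tau\bbB$ separate. You cite Lemma~\ref{lem:propertiesBlockOperators} only to get $\langle\bbC\qbf,\qbf\rangle\ge0$, and you take all of the $\Q^2$-coercivity, with constant $\tau c_{\bbB}$, from $\bbB$. This buys a complete infinite-dimensional argument. The paper speaks only of positive definiteness, which for operators $\Q^2\to(\Q^*)^2$ does not by itself imply invertibility unless it is read as coercivity. Your bound feeds directly into Lax--Milgram.

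Your structure also shows that, for $\tau>0$, semi-definiteness of $\bbC$ alone (i.e.\ $\cc_0^2\le c_0\ccc_0$) would suffice. By quoting the lemma, however, you formally pass through its stronger componentwise condition.
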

\begin{proof}
To write the implicit Euler scheme in terms of operator matrices, we define 
\[
	\bbM_\mathrm{imE}
	= \begin{bmatrix} \calA & -\bbD^* \\ \bbD & \phantom{-}\bbC\phantom{^*} \end{bmatrix}, \qquad
	\bbD = \begin{bmatrix} \calD \\ \calDD \end{bmatrix}, \qquad
	\bbC = \begin{bmatrix} \phantom{-}\calC_\tau & - \calCC\phantom{_\tau} \\ -\calCC\phantom{_\tau} & \phantom{-}\calCCC_\tau \end{bmatrix}
\]
with $\calC_\tau \coloneqq \calC + \tau \calB$, $\calCCC_\tau \coloneqq \calCCC + \tau \calBB$. The classical Schur complement yields
\[
	\bbM_\mathrm{imE}
	= \begin{bmatrix} I & 0 \\ \bbD \calA^{-1} & I \end{bmatrix}
	\begin{bmatrix} \calA & 0 \\ 0 & \bbC + \bbD A^{-1}\bbD^* \end{bmatrix}
	\begin{bmatrix} I & -\calA^{-1}\bbD^* \\ 0 & I \end{bmatrix}
\]
such that the invertibility of the iteration matrix $\bbM_\mathrm{imE}$ follows from the invertibility of the matrix $\bbC + \bbD \calA^{-1}\bbD^\top$. Since $\calA$ is positive definite, the matrix~$\bbD \calA^{-1}\bbD^\top$ is positive semi-definite. Hence, it is sufficient to show that~$\bbC$ is positive definite. To see this, we apply once more the Schur complement, leading to 
\[
	\bbC
	= \begin{bmatrix} I & 0 \\ -\calCC \calC_\tau^{-1} & I \end{bmatrix}
	\begin{bmatrix} \calC_\tau & 0 \\ 0\phantom{_\tau} & \calCCC_\tau - \calCC \calC_\tau^{-1}\calCC \end{bmatrix}
	\begin{bmatrix} I & -\calC_\tau^{-1}\calCC \\ 0 & I \end{bmatrix}.
\]
Since, $\calC_\tau$ is invertible (and even positive definite) and $\calCC$ is symmetric, we conclude that $\bbC$ is positive definite if and only if $\calCCC_\tau - \calCC \calC_\tau^{-1}\calCC$ is positive definite. A sufficient condition for this is the positivity of~$\calCCC - \calCC \calC^{-1}\calCC$, which is guaranteed by Assumption~\ref{ass:chat}.
\end{proof}
%
%
%
\section{An Abstract Convergence Result} 
\label{sec:convergence}

In~\cite{AltMU21}, the idea has been introduced of analyzing a decoupled scheme based on a connection to PDEs with delays, i.e., PDEs where the solution depends on itself in the past. More precisely, the semi-explicit scheme introduced in~\cite{AltMU21} may be understood as a classical implicit Euler discretization of a PDE with a delay and is the basis for our studies here. It turns out that the proposed decoupled discretizations for thermo--poroelasticity introduced in Sections~\ref{sec:halfDecoupled} and~\ref{sec:fullyDecoupled} below can be rephrased as a specific delay equation of parabolic type.  
Therefore, this section is devoted to the convergence proof of the implicit Euler scheme to such a delay equation. This then serves as a basis to prove convergence of the proposed decoupled schemes. 

We consider the operator equation
\begin{align}
\label{eq:auxEquation}
	\calE \dot p + \calK p + \calM \dot p_\tau 
	= r
\end{align}
with the delay term $p_\tau(t) \coloneqq p(t-\tau)$ and right-hand side~$r$. Here, $\tau$ denotes the delay time which is chosen as the time step size later on. Note that an equation of the form~\eqref{eq:auxEquation} not only calls for an initial value but for a so-called {\em history function}, i.e., the prescription of $p$ on the time interval $[-\tau,0]$. Moreover, we make the following assumptions. 
\begin{assumption}
\label{ass:parabolicDelaySetting}	
Let $\calX$ be a Hilbert space such that $\calX, \cHX, \calX^*$ forms a Gelfand triple. Equation~\eqref{eq:auxEquation} should be understood as an equation in $\calX^*$ for almost every time $t\in [0,T]$.
Moreover, the operators $\calE, \calM\colon\cHX\to\cHX$ and $\calK\colon \calX \to\calX^*$ are symmetric, continuous, and elliptic in their respective spaces. 
The ellipticity constants of $\calE$ and $\calK$ are denoted by~$c_\calE$ and $c_\calK$, respectively. Moreover, the continuity constant of $\calM$ equals $C_\calM$. 
\end{assumption}

The implicit Euler scheme with constant step size $\tau$ applied to~\eqref{eq:auxEquation} yields 
\begin{align}
\label{eq:auxEquation:Euler}
	\calE p^{n+1} - \calE p^n + \tau \calK p^{n+1} + \calM\, (p^n - p^{n-1})
	= \tau r^{n+1}. 
\end{align}
In the following theorem, we show under which conditions this provides a first-order approximation of~\eqref{eq:auxEquation}. Note that the requirement of having a history function in the continuous case carries over to the assumption that two initial values $p^0$ and $p^1$ are given. 
\begin{theorem}
\label{th:convergence:auxEquation}
Consider Assumption~\ref{ass:parabolicDelaySetting} with $C_\calM \le c_\calE$ and a continuous right-hand side $r\colon[0,T] \to \calX^*$. Then the Euler scheme~\eqref{eq:auxEquation:Euler} is first-order convergent. More precisely, if $p\in H^2(0,T; \calX)$ is the solution of~\eqref{eq:auxEquation}, we have
\[
	\|p(t^n) - p^n\|_\cHX^2
	+ \tau\, \sum_{j=2}^n \|p(t^j) - p^j\|^2_\calK 
	\le E_\mathrm{init} + C\, T\, \tau^2,
\]
where $C$ is a positive constant and $$E_\mathrm{init} = \|p(\tau) - p^1\|_\calE^2 + \|p(0) - p^0\|_\calM^2 \lesssim \|p(\tau) - p^1\|_\cHX^2 + \|p(0) - p^0\|_\cHX^2$$ contains the initial error. 
\end{theorem}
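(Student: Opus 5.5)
Define the errors $e^n \coloneqq p(t^n) - p^n$. Plugging the exact solution into the scheme~\eqref{eq:auxEquation:Euler} leaves a defect $\delta^{n+1}$ that satisfies
\[
	\calE (e^{n+1} - e^n) + \tau \calK e^{n+1} + \calM\, (e^n - e^{n-1}) = \delta^{n+1}.
\]
The defect is
\[
	\delta^{n+1} = \calE\big(p(t^{n+1}) - p(t^n) - \tau \dot p(t^{n+1})\big) + \calM\big(p(t^n) - p(t^{n-1}) - \tau \dot p(t^n)\big),
\]
obtained by integrating~\eqref{eq:auxEquation} at $t^{n+1}$, using $\dot p_\tau(t^{n+1}) = \dot p(t^n)$. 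By Taylor's formula with integral remainder, both brackets are bounded in $\cHX$ by $\tau^{3/2}\|\ddot p\|_{L^2(t^{n-1},t^{n+1};\cHX)}$. This is where $p \in H^2(0,T;\calX)$ enters. Hence $\sum_n \|\delta^{n+1}\|_\cHX^2 \lesssim \tau^3 \|\ddot p\|_{L^2(\cHX)}^2$, and the individual defects are of size $\tau^{3/2}$.

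Next I test the error equation with $e^{n+1} - e^n$. The $\calE$-term gives $\|e^{n+1}-e^n\|_\calE^2 \ge c_\calE \|e^{n+1}-e^n\|_\cHX^2$. By symmetry of $\calK$,
\[
	\tau\langle \calK e^{n+1}, e^{n+1}-e^n\rangle = \tfrac{\tau}{2}\big(\|e^{n+1}\|_\calK^2 - \|e^n\|_\calK^2 + \|e^{n+1}-e^n\|_\calK^2\big).
\]
The delay term is bounded by
\[
	|\langle \calM (e^n-e^{n-1}), e^{n+1}-e^n\rangle| \le \tfrac{1}{2}\|e^n - e^{n-1}\|_\calM^2 + \tfrac12 \|e^{n+1}-e^n\|_\calM^2,
\]
and $\|\cdot\|_\calM^2 \le C_\calM\|\cdot\|_\cHX^2 \le c_\calE \|\cdot\|_\cHX^2 \le \|\cdot\|_\calE^2$ by the assumption $C_\calM \le c_\calE$. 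Summing over $n$, the $\calM$-differences telescope against the $\calE$-differences. What remains is the initial term $\tfrac12\|e^1-e^0\|_\calM^2$, which is bounded by $E_\mathrm{init}$, together with a nonnegative surplus $\tfrac12\sum(\|\cdot\|_\calE^2 - \|\cdot\|_\calM^2)$.

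The main obstacle is the defect term $\langle \delta^{n+1}, e^{n+1}-e^n\rangle$. In the borderline case $C_\calM = c_\calE$ the surplus gives no room to absorb $\|e^{n+1}-e^n\|^2$, so naive Young's inequality fails. The $\calK$ part of the surplus also cannot be used for this, since $\tau\|e^{n+1}-e^n\|_\calK^2$ only scales with $\tau$. I would therefore use summation by parts on $\sum_n \langle \delta^{n+1}, e^{n+1}-e^n\rangle$. This produces $\langle \delta^{N}, e^N\rangle$ plus $\sum \langle \delta^{n+1}-\delta^{n}, e^n\rangle$ plus initial terms. Here $\|\delta^{n+1}-\delta^n\| \lesssim \tau^{2}$ provided $\ddot p$ has slightly more regularity, or this is handled through the integral representation of the defect. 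These terms can then be controlled by $\tau \|e^n\|^2_\cHX + \tau^3$ and closed with a discrete Gronwall lemma.

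This route yields a bound on $\|e^N\|_\calE^2$ only if I additionally test with $e^{n+1}$. So alternatively, and probably as the cleaner route, I would test the error equation with $e^{n+1}$ instead. This gives $\tfrac12(\|e^{n+1}\|_\calE^2 - \|e^n\|_\calE^2 + \|e^{n+1}-e^n\|_\calE^2) + \tau\|e^{n+1}\|_\calK^2$. I would rewrite the delay term as $\langle\calM(e^n - e^{n-1}), e^{n+1}\rangle$ and apply summation by parts, moving the difference onto $e^{n+1}$. The resulting $\langle \calM e^n, e^{n+1}-e^{n}\rangle$-type terms are absorbed into $\|e^{n+1}-e^n\|_\calE^2$ using $C_\calM \le c_\calE$, leaving $\|e^0\|_\calM$ and boundary terms. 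The defect is then handled by $\langle\delta^{n+1}, e^{n+1}\rangle \le \tfrac{\tau}{2}\|e^{n+1}\|_\calK^2 + \tfrac{1}{2\tau c_\calK}\|\delta^{n+1}\|_{\calX^*}^2$, whose sum is $O(\tau^2)$. Summing from $n=1$ gives the asserted bound: the $\calK$-sum starts at $j=2$, the first step is counted in $E_\mathrm{init}$, and $C$ depends on $\|\ddot p\|$, $c_\calK$, and the equivalence of $\|\cdot\|_\cHX$ and $\|\cdot\|_\calE$.
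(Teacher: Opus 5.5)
Your final route has the same architecture as the paper's proof: the same error equation with defect $\calE d^{n+1}+\calM d^{n}$, testing with $e^{n+1}$, and the defect absorbed by Young's inequality into $\tau\|e^{n+1}\|_{\calK}^2$ at a cost of $O(\tau^3)$ per step. Your $L^2$-in-time Taylor bound fits $p\in H^2(0,T;\calX)$ at least as well as the paper's pointwise $\|d^n\|_{\cHX}\le C\tau^2$. The gap is in the delay term, the only place where $C_{\calM}\le c_{\calE}$ enters, and there the mechanism you describe does not work. Summation by parts gives
\[
	\sum_{n=1}^{N-1}\langle\calM(e^n-e^{n-1}),e^{n+1}\rangle
	= \langle\calM e^{N-1},e^{N}\rangle-\langle\calM e^{0},e^{2}\rangle
	-\sum_{n=1}^{N-2}\langle\calM e^{n},e^{n+2}-e^{n+1}\rangle,
\]
i.e.\ index-shifted terms that are linear in $e^n$ itself rather than in a difference. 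Such terms cannot be ``absorbed into $\|e^{n+1}-e^n\|_{\calE}^2$''. Young's inequality leaves $\sum_n\|e^n\|_{\calM}^2$ with an $O(1)$ weight and no factor $\tau$, which does not telescope. A discrete Gronwall argument with $O(1)$ weights per step only gives growth like $(1+c)^{T/\tau}$. Likewise, in the borderline case $C_{\calM}=c_{\calE}$ permitted by the theorem, a Cauchy--Schwarz/Young bound of the boundary term $\langle\calM e^{N-1},e^{N}\rangle$ consumes all of $\tfrac12\|e^{N}\|_{\calE}^2$ and leaves $\tfrac12\|e^{N-1}\|_{\calM}^2$ on the wrong side.

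The missing idea is that the critical quantity is the product of two \emph{consecutive differences}, and it should be isolated directly. Split $e^{n+1}=e^n+(e^{n+1}-e^n)$ in the delay term and apply the polarization identity to the first part:
\[
	2\langle\calM(e^n-e^{n-1}),e^{n+1}\rangle
	= \|e^n\|_{\calM}^2-\|e^{n-1}\|_{\calM}^2+\|e^n-e^{n-1}\|_{\calM}^2
	+2\langle\calM(e^n-e^{n-1}),e^{n+1}-e^n\rangle .
\]
By Cauchy--Schwarz and Young, the modulus of the last term is at most $\|e^n-e^{n-1}\|_{\calM}^2+\|e^{n+1}-e^n\|_{\calM}^2$. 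The first piece cancels the positive $\|e^n-e^{n-1}\|_{\calM}^2$ just produced. The second is bounded by $\|e^{n+1}-e^n\|_{\calE}^2$ thanks to $C_{\calM}\le c_{\calE}$, so it cancels the numerical dissipation contained in $2\langle\calE(e^{n+1}-e^n),e^{n+1}\rangle$. What remains telescopes in $\|e^{n+1}\|_{\calE}^2+\|e^{n}\|_{\calM}^2$, which is exactly why $E_{\mathrm{init}}=\|e^1\|_{\calE}^2+\|e^0\|_{\calM}^2$. Your summation-by-parts version can be rescued only by re-splitting $e^n=e^{n+1}-(e^{n+1}-e^n)$ in the shifted terms and treating the boundary terms with the same polarization identity. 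That simply regenerates this cross term. With this step in place, the detour through the test function $e^{n+1}-e^n$, summation by parts of the defect, and a discrete Gronwall lemma is unnecessary.
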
	
\begin{proof}
Due to the assumptions, the operators $\calK$, $\calE$, and $\calM$ define norms, which are equivalent to the $\calX$- and $\cHX$-norms, respectively. In particular, we have
\[
	c_\calK\, \|p\|_{\calX}^2
	\le \langle \calK p, p\rangle
	= \| p\|_\calK^2, \qquad
	\| p\|^2_\calM
	\le C_\calM\, \| p\|^2_\cHX
	\le \tfrac{C_\calM}{c_\calE}\, \| p\|^2_\calE
	= \tfrac{C_\calM}{c_\calE}\, \langle\calE p, p\rangle.
\]
Moreover, due to symmetry, we have the identity
\[
	2\,\langle \calM p, p-q \rangle 
	= \|p\|^2_\calM - \|q\|^2_\calM + \|p-q\|^2_\calM.
\]	
Such an equality also holds for the inner product defined through~$\calE$. We define the error and the defect by 
\[
	e^n 
	\coloneqq p(t^n) - p^n, \qquad
	d^n 
	\coloneqq p(t^{n}) - p(t^{n-1}) - \tau \dot p(t^{n}),
\]
respectively. 
For the latter, the regularity assumption on $p$ implies that $\|d^n\|_\cHX \le C\, \tau^2$ with a constant depending on the second derivative of $p$. Taking the difference of~\eqref{eq:auxEquation} evaluated at time $t = t^{n+1}$ (and multiplied by $\tau$) and \eqref{eq:auxEquation:Euler}, we obtain the error equation 
\[
	\calE e^{n+1} - \calE e^n + \tau \calK e^{n+1} + \calM\, (e^n-e^{n-1}) 
	= \calE d^{n+1} + \calM d^n.
\]
With the test function $2e^{n+1}$, we obtain by Young's inequality 
\begin{align*}
	\|e^{n+1}\|_\calE^2 - &\|e^n\|_\calE^2 + \|e^{n+1}-e^n\|_\calE^2 
	+ 2\tau\, \|e^{n+1}\|^2_\calK 
	+ \|e^{n}\|_\calM^2 - \|e^{n-1}\|_\calM^2 + \|e^{n}-e^{n-1}\|_\calM^2  \\
	&= 2\, \big\langle \calE d^{n+1}, e^{n+1}\big\rangle 
	+ 2\, \big\langle \calM d^n, e^{n+1}\big\rangle 
	- 2\, \big\langle\calM\, (e^n-e^{n-1}), e^{n+1}-e^{n}\big\rangle \\
	&\le 2\, \|d^{n+1}\|_\calE \|e^{n+1}\|_\calE 
	+ 2\,\|d^{n}\|_\calM \|e^{n+1}\|_\calM 
	+ 2\,\| e^n-e^{n-1}\|_\calM \|e^{n+1}-e^{n}\|_\calM \\
	&\le C\, \tau^3 + \tau\, \|e^{n+1}\|^2_\calK 
	+ \| e^n-e^{n-1}\|_\calM^2 + \tfrac{C_\calM}{c_\calE}\, \|e^{n+1}-e^{n}\|_\calE^2.  
\end{align*}
The constant $C$ depends on the operators $\calE, \calK, \calM$ and the constant of the embedding $\cHX\hookrightarrow \calX$. Due to $C_\calM \le c_\calE$, we can eliminate the term involving $\|e^{n+1}-e^{n}\|_\calE$ and obtain
\begin{align*}
	\|e^{n+1}\|_\calE^2 - \|e^n\|_\calE^2 
	+ \tau\, \|e^{n+1}\|^2_\calK 
	+ \|e^{n}\|_\calM^2 - \|e^{n-1}\|_\calM^2 
	\le C\, \tau^3 . 
\end{align*}
Summing from $1$ to $n$, we get
\begin{align*}
	\|e^{n+1}\|^2_\calE 
	+ \tau\, \sum_{j=1}^{n} \|e^{j+1}\|^2_\calK 
	+ \|e^{n}\|_\calM^2
	\le \|e^1\|_\calE^2 + \|e^{0}\|_\calM^2 + \sum_{j=1}^{n}C\, \tau^3 
	= E_\mathrm{init} + C\, T\, \tau^2,
\end{align*}
which completes the proof.
\end{proof}
\begin{example}[poroelasticity]
We consider the equations of linear poroelasticity from~\eqref{eq:poroOperator} where we introduce a delay term in the first equation. More precisely, we replace~\eqref{eq:poroOperator:a} by $\calA u - \calD^*p_\tau = \f$, leading to the system
\[
	\calC \dot p + \calB p + \calD\calA^{-1}\calD^*\dot p_\tau
	= g - \calD \calA^{-1} \dot f.
\]
Considering $\calX = \Q$, $\cHX = \cHQ$ and the operators 
\[
	\calE 
	= \calC\colon\cHQ\to\cHQdual,\qquad
	\calK 
	= \calB\colon\Q\to\Q^*, \qquad
	\calM 
	= \calD\calA^{-1}\calD^*
	\colon \cHQ 
	\to \cHQ, 
\]
the system can be written in the form~\eqref{eq:auxEquation}. Further note that $\calC$ and $\calB$ are continuous, symmetric, and elliptic with constants~$c_c$ and~$c_b$, respectively. Since $\calA$ is symmetric, we conclude that
\[
	(\calM p, q)_{\cHQ}
	= \langle \calA^{-1}\calD^*p, \calD^*q\rangle_{\V, \V^*}
	= \langle p,  \calD\calA^{-1}\calD^*q\rangle_{\cHQ,\cHQdual}
	= (p, \calM q)_{\cHQ}.
\]
To see that $\calM$ is elliptic, we apply the inf--sup stability of $\calD$ (see, e.g., \cite{Bog79,AcoD17}) 
with constant $c_d$, leading to  
\[
	(\calM q, q)_{\cHQ}
	= \langle\calA^{-1}\calD^*q, \calD^*q\rangle_{\V, \V^*}
	\ge \frac{1}{C_a}\, \|\calD^*q\|^2_{\V^*}
	\ge \frac{c^2_d}{C_a}\, \|q\|^2_{\cHQ}.
\]
Finally, we have 
\[
	\|\calM \qbf \|_{\cHQdual} 
	= \big\|\calD\calA^{-1}\calD^* \qbf \big\|_{\cHQdual} 
	\le \frac{C_d^2}{c_a}\, \|\qbf\|_{\cHQ}.
\]
Hence, following Theorem~\ref{th:convergence:auxEquation}, the semi-explicit scheme
\begin{align*}
	\begin{bmatrix} 
		\calA & \phantom{-}0  \\ 
		\calD & \calC + \tau \calB
	\end{bmatrix}
	\begin{bmatrix} u^{n+1} \\ p^{n+1} \end{bmatrix}
	= 
	\begin{bmatrix} 0 & \phantom{-}\calD^* \\ \calD & \phantom{-}\calC \phantom{^*} \end{bmatrix}
	\begin{bmatrix} u^{n} \\ p^{n} \end{bmatrix}
	+ 
	\begin{bmatrix} \phantom{\tau}\f^{n+1} \\ \tau \g^{n+1} \end{bmatrix}
\end{align*}
converges with order 1 as long as $C_d^2 \le c_a c_0$, which guarantees $C_\calM \le c_\calE$. This is true for general elliptic--parabolic systems of the form~\eqref{eq:poroBilinear} and is in line with the convergence result obtained in~\cite{AltMU21}. For poroelasticity, this can be formulated solely in terms of the physical constants, namely $\alpha^2 M \le \mu+\lambda$; see~\cite{AltD24}.
\end{example}
%
%
\section{Half-Decoupled Schemes}
\label{sec:halfDecoupled}
This section is devoted to numerical schemes for thermo--poroelasticity, which decouple the computation of the deformation~$u$ from the other two variables. We first present the iterative scheme introduced in~\cite{BruABNR20} before introducing two novel schemes. 
%
%
\subsection{HF--M scheme from~\cite{BruABNR20}}
\label{sec:halfDecoupled:iterative}
The following iterative scheme requires the repeated alternating solution of a problem for the temperature and pressure (HF, heat and flow) and a problem for the displacement (M, mechanics) within each time step. That is, an additional inner iteration is necessary to achieve a reasonable decoupling. 
Moreover, the scheme involves stabilization parameters, which we denote by $L_\theta$ and $L_p$. Note, however, that the original scheme proposed in~\cite{BruABNR20} is formulated for nonlinear thermo--poroelasticity and its `five-field' formulation, which fits the application of lowest-order finite elements in $H(\ddiv)$ spaces. The scheme presented here is `translated' to the considered formulation~\eqref{eq:thermoporoBilinear} of the linear case.

Given the approximation $(u^n, p^n, \theta^n)$ at time $t = t^n$ and the previous inner iterate $(u^{n+1,k}, p^{n+1,k}, \theta^{n+1,k})$, we compute in a first step $\theta^{n+1,k+1}$ and $p^{n+1,k+1}$ by  
\begin{align*}
	\calDD u^{n+1,k} + (\calCCC+L_\theta) \theta^{n+1,k+1} - \calCC p^{n+1,k+1} 
	&+ \tau \calBB \theta^{n+1,k+1} \\
	&\quad= \tau \h^{n+1} + \calDD u^n + \calCCC \theta^n - \calCC p^n + L_\theta \theta^{n+1,k}, \\
	\calD u^{n+1,k} + (\calC+L_p) p^{n+1,k+1} - \calCC \theta^{n+1,k+1} &+ \tau \calB p^{n+1,k+1} \\
	&\quad= \tau \g^{n+1} + \calD u^n + \calC p^n - \calCC \theta^n + L_p p^{n+1,k},
\end{align*}
which is a coupled system involving~$\theta^{n+1,k+1}$ and~$p^{n+1,k+1}$.
Then, in a second step, we compute $u^{n+1,k+1}$ from 
\[
	\calA u^{n+1,k+1} - \calD^* p^{n+1,k+1} - \calDD \theta^{n+1,k+1} 
	= \f^{n+1}.
\]
The iteration can also be written in matrix form, namely
\begin{align}
	&\begin{bmatrix} 
		\calA & -\calD^* & -\calDD^*\\ 
		0 & \calC + L_p + \tau \calB & - \calCC\phantom{^*} \\ 
		0 & -\calCC\phantom{^*} & \calCCC + L_\theta + \tau \calBB 
	\end{bmatrix}
	\begin{bmatrix} 
		u^{n+1,k+1} \\ 
		p^{n+1,k+1} \\ 
		\theta^{n+1,k+1} 
	\end{bmatrix} \notag \\
	&\hspace{2.5cm}= 
	\begin{bmatrix} 
		\phantom{-}0 & 0 & 0 \\ 
		-\calD & L_p & 0 \\ 
		-\calDD & 0& L_\theta 
	\end{bmatrix}
	\begin{bmatrix} 
		u^{n+1,k} \\ 
		p^{n+1,k} \\ 
		\theta^{n+1,k} 
	\end{bmatrix}
	+
	\begin{bmatrix} 
		0 & \phantom{-}0 & \phantom{-}0 \\ 
		\calD & \phantom{-}\calC & - \calCC \\ 
		\calDD & -\calCC & \phantom{-}\calCCC 
	\end{bmatrix}
	\begin{bmatrix} 
		u^{n} \\ 
		p^{n} \\ 
		\theta^{n} 
	\end{bmatrix}
	+ 
	\begin{bmatrix} 
		\phantom{\tau}\f^{n+1} \\ 
		\tau \g^{n+1} \\ 
		\tau \h^{n+1} 
	\end{bmatrix}
	\label{eq:halfDecoupled:norway}
\end{align}
for $k = 0, \dots, K$. Here, the structure of the iteration matrix on the left-hand side clearly shows the decoupling of $u$ and $(p,\theta)$ in each step. 
In~\cite{BruABNR20}, the following convergence result of the inner iteration is shown. 
\begin{theorem}[cf.~{\cite[Th.~4.3]{BruABNR20}}]
Assume sufficiently large stabilization parameters $L_\theta$ and $L_p$. Then, iteration~\eqref{eq:halfDecoupled:norway} defines a contraction.
\end{theorem}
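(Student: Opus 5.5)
We work with the differences between two consecutive inner iterates within a fixed time step. Set $e_u^{k} \coloneqq u^{n+1,k}-u^{n+1,k-1}$ and define $e_p^k$, $e_\theta^k$ analogously, with $\mathbf{e}^k \coloneqq [e_p^k; e_\theta^k]$. Subtracting two consecutive instances of \eqref{eq:halfDecoupled:norway} removes the data $(u^n,p^n,\theta^n)$ and the right-hand sides. What remains is the homogeneous error system
\begin{align*}
	\calA e_u^{k+1} &= \bbD^* \mathbf{e}^{k+1}, \\
	(\bbC + \tau\bbB + \mathbb{L})\, \mathbf{e}^{k+1} &= \mathbb{L}\, \mathbf{e}^k - \bbD e_u^k,
\end{align*}
where $\mathbb{L} \coloneqq \operatorname{diag}(L_p, L_\theta)$. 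Eliminating $e_u$ through the first equation gives the fixed-point map for the flow/heat differences,
\[
	(\bbC + \tau\bbB + \mathbb{L})\, \mathbf{e}^{k+1} = (\mathbb{L} - \bbD\calA^{-1}\bbD^*)\, \mathbf{e}^{k}.
\]
The goal is to show that this map is a contraction in a suitable norm, for instance the $\mathbb{L}$-weighted $L^2$ norm or an energy norm built from $\bbC$, $\bbB$ and $\mathbb{L}$.

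To prove the contraction, we test the error equation with $\mathbf{e}^{k+1}$. The left-hand side is bounded below by $c_{\bbC}\|\mathbf{e}^{k+1}\|^2 + \tau c_{\bbB}\|\mathbf{e}^{k+1}\|_{\Q^2}^2 + \langle \mathbb{L}\mathbf{e}^{k+1},\mathbf{e}^{k+1}\rangle$, using the ellipticity from Lemma~\ref{lem:propertiesBlockOperators}, which relies on Assumption~\ref{ass:chat}.

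For the right-hand side, set $\mathcal{S} \coloneqq \bbD\calA^{-1}\bbD^*$. This operator is symmetric and positive semidefinite on $\cHQsquare$, with norm at most $C_{\bbD}^2/c_a$. If $\mathbb{L} \ge \mathcal{S}$ in the operator sense, then $\mathbb{L}-\mathcal{S}$ is symmetric positive semidefinite and defines a seminorm. Applying Cauchy--Schwarz in the $(\mathbb{L}-\mathcal{S})$ seminorm and then Young's inequality gives
\[
	\langle (\mathbb{L}-\mathcal{S})\mathbf{e}^k, \mathbf{e}^{k+1}\rangle \le \tfrac12\|\mathbf{e}^k\|^2_{\mathbb{L}-\mathcal{S}} + \tfrac12\|\mathbf{e}^{k+1}\|^2_{\mathbb{L}-\mathcal{S}}.
\]
The $\mathcal{S}$ part of $\|\mathbf{e}^{k+1}\|^2_{\mathbb{L}-\mathcal{S}}$ enters with a negative sign and can simply be dropped. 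We then absorb $\tfrac12\|\mathbf{e}^{k+1}\|_{\mathbb{L}}^2$ into the left-hand side. This leaves
\[
	\big(\tfrac12 \mathbb{L} + c_{\bbC}\big)\text{-norm of } \mathbf{e}^{k+1} \;\le\; \tfrac12\, \mathbb{L}\text{-norm of } \mathbf{e}^{k}
\]
(both squared), up to the $\mathcal{S}$ term. Since both norms are diagonal and equivalent, this yields a contraction factor
\[
	\max\Big( \tfrac{L_p}{L_p + 2c_{\bbC}},\ \tfrac{L_\theta}{L_\theta+2c_{\bbC}} \Big) < 1.
\]
The condition ``$L_p, L_\theta$ sufficiently large'' enters exactly through the requirement $\mathbb{L} \ge \mathcal{S}$. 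It suffices to take $\min(L_p,L_\theta) \ge C_{\bbD}^2/c_a$, or more sharply $L_p \ge C_d^2/c_a$ and $L_\theta \ge C_{\dd}^2/c_a$ up to the cross terms of $\mathcal{S}$. Convergence of $\mathbf{e}^k$ then transfers to $e_u^k$ through $\|e_u^{k+1}\|_\V \le C_{\bbD}\|\mathbf{e}^{k+1}\|/c_a$.

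The main obstacle is the step involving $\mathbb{L}-\mathcal{S}$. The operator $\mathcal{S}$ couples $p$ and $\theta$ off-diagonally, so the lower bound $\mathbb{L}\ge\mathcal{S}$ must be checked in the full block sense rather than componentwise. If bounding $\mathcal{S}$ by $(C_{\bbD}^2/c_a)\,\mathrm{Id}$ proves too crude, I would instead measure the contraction in the norm induced by $\mathbb{L}-\mathcal{S}$ itself, or by $\bbC+\tau\bbB+\mathbb{L}$. The contraction factor in that setting is the spectral radius of $(\bbC+\tau\bbB+\mathbb{L})^{-1}(\mathbb{L}-\mathcal{S})$, which is a generalized Rayleigh quotient strictly below $1$ because $\bbC$ is elliptic.
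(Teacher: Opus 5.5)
The paper gives no proof of its own here. It only refers to \cite[Th.~4.3]{BruABNR20}, where the contraction is established for the nonlinear five-field formulation (with an additional step-size restriction in the nonlinear case). As is typical for $L$-schemes, that argument tests the unreduced increment equations with the increments and absorbs the lagged coupling terms via Young's inequality.

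Your proof is correct and takes a more algebraic route that exploits linearity. Eliminating $e_u$ through $\calA e_u=\bbD^*\mathbf{e}$ reduces the iteration to $(\bbC+\tau\bbB+\mathbb{L})\mathbf{e}^{k+1}=(\mathbb{L}-\mathcal{S})\mathbf{e}^k$ with $\mathcal{S}=\bbD\calA^{-1}\bbD^*$. This shows exactly where the size of $\mathbb{L}$ matters. Under $\min(L_p,L_\theta)\ge C_{\bbD}^2/c_a$ it gives the explicit, $\tau$-independent factor $\max_i L_i/(L_i+2c_{\bbC})$ for the squared $\mathbb{L}$-weighted norm. What you give up is the ability to handle the convective nonlinearity treated in the reference, which does not fit a reduction to a linear fixed-point map.

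Two minor points. First, if the initial inner iterate (e.g.\ $u^{n+1,0}=u^n$) does not solve the mechanics equation with $f^{n+1}$, the reduced recursion for consecutive increments holds only from the second sweep on. Measuring errors against the fixed point (the implicit Euler step) avoids this.

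Second, your fallback paragraph is incomplete: ellipticity of $\bbC$ gives only the upper bound of the generalized Rayleigh quotient. Set $M=\bbC+\tau\bbB+\mathbb{L}$ and $N=\mathbb{L}-\mathcal{S}$. Then $M^{-1}N$ is self-adjoint in the $M$-inner product, and its $M$-norm is below one only if, in addition, $M+N=\bbC+\tau\bbB+2\mathbb{L}-\mathcal{S}$ is uniformly positive. That lower bound is where the largeness of $\mathbb{L}$ enters in this formulation. Also, $\mathbb{L}-\mathcal{S}$ defines only a seminorm unless it is strictly positive. The same observation shows that $2\mathbb{L}\ge\mathcal{S}$, e.g.\ $\min(L_p,L_\theta)\ge C_{\bbD}^2/(2c_a)$, already suffices, which halves your threshold.
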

\begin{remark}
In the nonlinear case considered in~\cite{BruABNR20}, one also needs a step size restriction in order to guarantee convergence of the inner iteration.  
\end{remark}
The contraction property implies that for $K\to\infty$ we converge to one step of the (fully coupled) implicit Euler method. This, in turn, guarantees convergence of the overall method as long as the number of inner iteration steps is large enough. Unfortunately, this result does not provide an estimate of the number of needed steps. Moreover, the number of steps grows when decreasing $\tau$.
%
%
\subsection{Semi-explicit discretization (M--HF)}
\label{sec:halfDecoupled:semiexplicit}
We now present an alternative approach without inner iteration and without stabilization parameter. This means that one time step simply consists of the solution of two subsystems, namely one for the displacement~$u$ and one for the pair $(p,\theta)$. Avoiding inner iterations completely comes along with a {\em weak coupling condition}, i.e., convergence is only guaranteed in a particular parameter setting. 

The scheme reads as follows. Given the approximation $(u^n, p^n, \theta^n)$ from the previous time step, first compute $u^{n+1}$ by solving 
\[
	\calA u^{n+1} - \calD^* p^n - \calDD^* \theta^n 
	= f^{n+1}.
\]
This corresponds to an implicit--explicit discretization, where $p$ and $\theta$ are included explicitly. The remaining equations are discretized by the implicit Euler method, where we use the already updated displacement. In total, this leads to the time stepping scheme
\begin{align}
	\label{eq:halfDecoupled:semiExpl}
	\begin{bmatrix} 
		\calA & \phantom{-}0 & \phantom{-}0 \\ 
		\calD & \calC + \tau \calB & - \calCC \\ 
		\calDD & -\calCC & \calCCC + \tau \calBB 
	\end{bmatrix}
	\begin{bmatrix} u^{n+1} \\ p^{n+1} \\ \theta^{n+1} \end{bmatrix}
	= 
	\begin{bmatrix} 0 & \phantom{-}\calD^* & \phantom{-}\calDD^* \\ \calD &\phantom{-} \calC \phantom{^*}& - \calCC\phantom{^*} \\ \calDD & -\calCC \phantom{^*}& \phantom{-}\calCCC\phantom{^*}	\end{bmatrix}
	\begin{bmatrix} u^{n} \\ p^{n} \\ \theta^{n} \end{bmatrix}
	+ 
	\begin{bmatrix} \phantom{\tau}\f^{n+1} \\ \tau \g^{n+1} \\ \tau \h^{n+1} \end{bmatrix}.
\end{align}
This system is similar to the implicit time discretization from Section~\ref{sec:model:implicitDisc}. The only difference is that the pressure and temperature terms are moved to the left-hand side (i.e., they are treated explicitly). The following theorem shows that this modification maintains the first-order convergence as long as a certain coupling condition is satisfied. 
\begin{theorem}[first-order convergence, half decoupled]
\label{th:halfDecoupled:semiexplicit}
Assume sufficiently smooth right-hand sides and 
\[
	\omega_\mathrm{HD} 
	\coloneqq \frac{C_d^2 + C^2_{\dd}}{c_a\, \min\{ c_0-\cc_0, \ccc_0-\cc_0 \}}
	\le 1.
\]
Then the scheme~\eqref{eq:halfDecoupled:semiExpl} converges with order one.
\end{theorem}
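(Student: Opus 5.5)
The plan is to rewrite the semi-explicit scheme~\eqref{eq:halfDecoupled:semiExpl} as the implicit Euler discretization~\eqref{eq:auxEquation:Euler} of a parabolic delay equation of the form~\eqref{eq:auxEquation}, and then invoke Theorem~\ref{th:convergence:auxEquation}. As in the proof of Theorem~\ref{thm:existenceThermoPoro}, we work with the block notation $\pbf = [p;\theta]$, $\gbf = [\g;\h]$ and the operators $\bbB$, $\bbC$, $\bbD$. On the continuous level, the scheme corresponds to the delay system
\[
	\calA u - \bbD^*\pbf_\tau = \f, \qquad
	\bbD\dot u + \bbC\dot\pbf + \bbB\pbf = \gbf .
\]
Eliminating $u = \calA^{-1}(\f + \bbD^*\pbf_\tau)$ gives
\[
	\bbC\dot\pbf + \bbB\pbf + \bbD\calA^{-1}\bbD^*\dot\pbf_\tau = \gbf - \bbD\calA^{-1}\dot\f .
\]
This is~\eqref{eq:auxEquation} with $\calX = \Q^2$, $\cHX = \cHQsquare$, $\calE = \bbC$, $\calK = \bbB$, $\calM = \bbD\calA^{-1}\bbD^*$ and $r = \gbf - \bbD\calA^{-1}\dot\f$. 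On the discrete level, eliminating $u^{n+1} = \calA^{-1}(\f^{n+1} + \bbD^*\pbf^n)$ and $u^n = \calA^{-1}(\f^n + \bbD^*\pbf^{n-1})$ from the second block row of~\eqref{eq:halfDecoupled:semiExpl} yields exactly
\[
	\bbC(\pbf^{n+1}-\pbf^n) + \tau\bbB\pbf^{n+1} + \calM(\pbf^n - \pbf^{n-1}) = \tau \gbf^{n+1} - \bbD\calA^{-1}(\f^{n+1}-\f^n).
\]
The right-hand side differs from $\tau r^{n+1}$ only by an $O(\tau^2)$ defect. This defect enters the error equation in the same way as $d^n$ and is absorbed into the $C\tau^3$ term, provided $\f$ is smooth enough, which is covered by the assumption of sufficiently smooth data.

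Next we verify Assumption~\ref{ass:parabolicDelaySetting}. Lemma~\ref{lem:propertiesBlockOperators} gives that $\bbC$ and $\bbB$ are symmetric, continuous and elliptic, with $c_\calE = c_{\bbC} \ge \min\{c_0-\cc_0, \ccc_0-\cc_0\}$; this uses Assumption~\ref{ass:chat}. Symmetry of $\calM$ follows exactly as in the poroelastic example. Continuity follows from
\[
	\|\calM\qbf\| \le C_{\bbD}^2 c_a^{-1}\|\qbf\| \le \frac{C_d^2 + C_{\dd}^2}{c_a}\,\|\qbf\|,
\]
so that $C_\calM \le (C_d^2+C_{\dd}^2)/c_a$.

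Ellipticity of $\calM$ is the delicate point. An inf--sup argument for the block operator $\bbD$ generally fails, since $\calD$ and $\calDD$ may be proportional (e.g.\ $\beta/\alpha$ constant), in which case $\bbD^*$ has a kernel. Inspecting the proof of Theorem~\ref{th:convergence:auxEquation}, however, only two properties of $\calM$ are used:
\begin{itemize}
	\item positive semi-definiteness, so that $\|\cdot\|_\calM$ is a seminorm and the identity $2\langle\calM p, p-q\rangle = \|p\|_\calM^2 - \|q\|_\calM^2 + \|p-q\|_\calM^2$ holds;
	\item the bound $\|p\|_\calM^2 \le \frac{C_\calM}{c_\calE}\|p\|_\calE^2$.
\end{itemize}
Positive semi-definiteness is immediate from $\langle\calM\qbf,\qbf\rangle = \langle\calA^{-1}\bbD^*\qbf,\bbD^*\qbf\rangle \ge 0$. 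The plan is therefore to note that the theorem remains valid for semi-definite $\calM$, since the Cauchy--Schwarz and Young steps work for seminorms. The expected main obstacle is exactly this justification, together with keeping constants sharp: the sharpest way to get $\|\qbf\|_\calM^2 \le \omega_\mathrm{HD}\|\qbf\|_\calE^2$ is to bound $\langle \calA^{-1}\bbD^*\qbf, \bbD^*\qbf\rangle \le c_a^{-1}\|\bbD^*\qbf\|^2$, then use $\|\bbD^*\qbf\|_{\V^*} \le C_{\bbD}\|\qbf\|$ and $\|\qbf\|^2 \le c_{\bbC}^{-1}\|\qbf\|_\bbC^2$.

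With these ingredients, the condition $C_\calM \le c_\calE$ is implied by $\omega_\mathrm{HD} \le 1$. Theorem~\ref{th:convergence:auxEquation} then yields first-order convergence of $\pbf^n = [p^n;\theta^n]$ in the $\cHQsquare$-norm, together with the discrete $L^2(\Q^2)$ bound, under the regularity $\pbf \in H^2(0,T;\Q^2)$ of the delay solution. Convergence of $u^n$ follows from the first row:
\[
	u(t^n) - u^n = \calA^{-1}\bbD^*\big(\pbf(t^n) - \pbf^{n-1}\big),
\]
so $u$ inherits the first-order rate. Finally, the statement concerns the original (non-delayed) solution, so the delay solution must be compared with it. I would handle this as in~\cite{AltMU21}: either show that the delay and undelayed solutions differ by $O(\tau)$, or directly write the error equation with respect to the true solution. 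In the latter approach, the extra term $\calM(\pbf(t^n) - \pbf(t^{n-1}) - \tau\dot\pbf(t^{n+1}))$ is $O(\tau^2)$ and acts as an additional defect, giving the same estimate.
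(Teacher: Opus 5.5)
Your argument is correct, but it does not follow the paper's proof. The paper writes the system in block form with $\pbf=[p;\theta]$ and notes that \eqref{eq:halfDecoupled:semiExpl} is then exactly the semi-explicit scheme of \cite{AltMU21} for an elliptic--parabolic problem with operators $\calA$, $\bbB$, $\bbC$, $\bbD$. It then cites the convergence theorem there, which only needs $C_{\bbD}^2\le c_a c_{\bbC}$, and by Lemma~\ref{lem:propertiesBlockOperators} this is implied by $\omega_{\mathrm{HD}}\le 1$. That route is short and leaves all technicalities (start-up, regularity, the $u$-error) to the cited result.

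Your reduction to Theorem~\ref{th:convergence:auxEquation} with $\calE=\bbC$, $\calK=\bbB$, $\calM=\bbD\calA^{-1}\bbD^*$ is the alternative the paper only sketches in the remark saying the result is also a corollary of that theorem. Your version is more careful than that sketch. For constant $\alpha,\beta$, as in the model, one has $\calDD=(\beta/\alpha)\calD$, so $\calM$ vanishes on $\{\alpha q_1+\beta q_2=0\}$ and the ellipticity of $\calM$ required in Assumption~\ref{ass:parabolicDelaySetting} fails. Your observation that the energy proof only uses symmetry, semidefiniteness and $\|\cdot\|_{\calM}^2\le (C_{\calM}/c_{\calE})\|\cdot\|_{\calE}^2$ is exactly what makes that remark rigorous.

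The cost of your route is bookkeeping that the citation hides. Besides the $f$-defect you already treat, two points need stating:
\begin{enumerate}
\item Theorem~\ref{th:convergence:auxEquation} takes $p^1$ as given. You should add that consistency of $u^0$ means $\pbf^{-1}=\pbf^0$, and the first step then gives $\|e^1\|=O(\tau)$, i.e.\ $E_{\mathrm{init}}=O(\tau^2)$.
\item For the comparison with the undelayed solution, commit to your second option. Theorem~\ref{th:convergence:auxEquation} as stated needs $H^2$-regularity of the delay solution, which generally fails here. For instance, with the constant history $\pbf^0$ on $[-\tau,0]$, which is consistent with the scheme, the time derivative of the delay solution jumps at $t=\tau$. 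Your direct error equation instead treats the lag term $\calM\bigl(\pbf(t^n)-\pbf(t^{n-1})-\tau\dot{\pbf}(t^{n+1})\bigr)=O(\tau^2)$ as an extra defect and only uses smoothness of the original solution.
\end{enumerate}
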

\begin{proof}
The proposed scheme equals the implicit discretization of the delay system
\begin{subequations}
\label{eq:theromoporoBlockFormulation:delay}
\begin{align}
	\calA u - \bbD^*\pbf_\tau  
	&= \f, \\
	\bbD \dot u + \bbC \dot{\pbf} + \bbB \pbf 
	&= \mathbf{\g}
\end{align}
\end{subequations}
with the operator matrices $\bbB, \bbC$, and~$\bbD$ as introduced in the proof of Theorem~\ref{thm:existenceThermoPoro} and the delay term $\pbf_\tau(t) := \pbf(t-\tau)$ as before. Hence, we can apply the convergence result of~\cite{AltMU21} for elliptic--parabolic problems, which is valid under the {\em weak coupling condition} $C_{\bbD}^2 \le c_a c_\bbC$. From Lemma~\ref{lem:propertiesBlockOperators}, we know that this condition is satisfied for $\omega_\mathrm{HD} \le 1$.  
\end{proof}
\begin{remark}
In the case of thermo--poroelasticity, the continuity and ellipticity constants appearing in $\omega_\mathrm{HD}$ can be bounded in terms of the physical parameters. Moreover, observing that the convergence proof is based on the estimate $\langle \bbD v, \qbf \rangle \le \sqrt{\omega_\mathrm{HD}}\, \|v\|_a \|\qbf\|_\bbC$, this can be further improved. 
For homogeneous Dirichlet boundary conditions, it is known from~\cite[Sect.~6.3]{Cia88} that $a(v,v) \ge (\mu + \lambda)\, \|\nabla\cdot v\|_\cHQ^2$; see also~\cite{AltD24} for the derivation. With this, we conclude that
\begin{align*}
	\langle \bbD v, \qbf \rangle
	= \langle \calD v, q_1 \rangle + \langle \calDD v, q_2 \rangle 
	\le \frac{1}{\sqrt{\mu + \lambda}}\, \Big( \alpha\, \|q_1\|_{\cHQ} + \beta\, \|q_2\|_{\cHQ} \Big) \|v\|_a.
\end{align*}
Using the ellipticity estimate for $\bbC$ from Lemma~\ref{lem:propertiesBlockOperators}, we get 
\[
	\big( \alpha\, \|q_1\|_{\cHQ} + \beta\, \|q_2\|_{\cHQ} \big)^2
	\le 2\max(\alpha^2, \beta^2)\, \|\qbf\|^2_{\cHQsquare} 
	\le \frac{2\max(\alpha^2, \beta^2)}{\min\big(c_0-\cc_0, \ccc_0-\cc_0\big)}\, \|\qbf\|^2_\bbC.
\]
Hence, the condition $\omega_\mathrm{HD} \le 1$ can be relaxed to the coupling condition 
\[
	\frac{2\,\max(\alpha^2, \beta^2)}{(\mu + \lambda)\, \min(c_0-\cc_0, \ccc_0-\cc_0)}
	\le 1.
\]
\end{remark}
\begin{remark}
The convergence result can also be obtained as a corollary of Theorem~\ref{th:convergence:auxEquation}. For this, one first needs to show that the solutions of~\eqref{eq:theromoporoBlockFormulation} and~\eqref{eq:theromoporoBlockFormulation:delay} only differ by a term of order~$\tau$. Then, the claim follows by setting $\calX \coloneqq \calQ^2$, $\cHX \coloneqq (\cHQ)^2$ for the spaces and $\calE \coloneqq \bbC$, $\calK = \bbB$, $\calM \coloneqq \bbD\calA^{-1}\bbD^*$ as operators, leading again to the condition $C_{\bbD}^2 \le c_a c_\bbC$. 
\end{remark}
%
%
\subsection{A semi-explicit discretization with inner iteration (M--HF)}
\label{sec:halfDecoupled:semiexplicitIter}
The scheme in Section~\ref{sec:halfDecoupled:semiexplicit} has the obvious drawback that it is not applicable for all applications, since the coupling condition $\omega_\mathrm{HD} \le 1$ needs to be fulfilled. On the other hand, the iterative scheme in Section~\ref{sec:halfDecoupled:iterative} depends on stabilization parameters and an unknown number of inner iteration steps. With the following approach, we aim to overcome both disadvantages simultaneously. 
Based on the coupling parameter $\omega_\mathrm{HD}$ from Theorem~\ref{th:halfDecoupled:semiexplicit}, we define the {\em relaxation parameter} 
\[
	\gamma 
	\coloneqq \frac{2}{2+\omega_\mathrm{HD}}.
\]
Moreover, $K \in \N$ denotes the (fixed) number of inner iteration steps. In each time, step we initialize 
\begin{subequations}
\label{eq:halfDecoupled:semiExplIterative}
\begin{align}
\label{eq:halfDecoupled:semiExplIterative:init}
	x^{n+1,0} 
	= \begin{bmatrix} 
		u^{n+1,0} \\  
		p^{n+1,0} \\ 
		\theta^{n+1,0}
	\end{bmatrix} 
	\coloneqq \begin{bmatrix} 
		u^{n} \\  
		p^{n} \\ 
		\theta^{n}
	\end{bmatrix} 
	= x^n.
\end{align}
For $k = 0, \dots, K-1$, we solve the system 
\begin{align}
\label{eq:halfDecoupled:semiExplIterative:system}
	&\begin{bmatrix} 
		\calA & \phantom{-}0 & \phantom{-}0 \\ 
		\calD & \calC + \tau \calB & - \calCC \\ 
		\calDD & -\calCC & \calCCC + \tau \calBB 
	\end{bmatrix}
	\begin{bmatrix} 
		u^{n+1,k+1} \\ \hat p^{n+1,k+1} \\ \hat \theta^{n+1,k+1} 
	\end{bmatrix} \notag \\
	&\hspace{2.0cm}= 
	\begin{bmatrix} 
		0 & \calD^* & \calDD^* \\ 0 & 0\phantom{^*} &  0\phantom{^*} \\ 0 & 0\phantom{^*} & 0\phantom{^*} 
	\end{bmatrix}
	\begin{bmatrix} 
		u^{n+1,k} \\ p^{n+1,k} \\ \theta^{n+1,k} 
	\end{bmatrix}
	+
	\begin{bmatrix} 
		0 & \phantom{-}0 & \phantom{-}0\\ \calD & \phantom{-}\calC & - \calCC \\ \calDD & -\calCC & \phantom{-}\calCCC	
	\end{bmatrix}
	\begin{bmatrix} 
		u^{n} \\ p^{n} \\ \theta^{n} 
	\end{bmatrix}
	+ 
	\begin{bmatrix} 
		\phantom{\tau}\f^{n+1} \\ \tau \g^{n+1} \\ \tau \h^{n+1} 
	\end{bmatrix}
\end{align}
for $u^{n+1,k+1}$, $ \hat p^{n+1,k+1}$, and $ \hat \theta^{n+1,k+1}$.
Additionally, except for the last step, we dampen the pressure and the temperature variable by setting 
\begin{align}
\label{eq:halfDecoupled:semiExplIterative:damping}
	\begin{bmatrix} 
		p^{n+1,k+1} \\ \theta^{n+1,k+1}
	\end{bmatrix} 
	\coloneqq 
	\gamma 
	\begin{bmatrix} 
		\hat p^{n+1,k+1} \\ \hat \theta^{n+1,k+1} 
	\end{bmatrix} 
	+ (1-\gamma)
	\begin{bmatrix} 
		p^n \\ \theta^n 
	\end{bmatrix}.
\end{align}
Finally, the approximations at time $t^{n+1}$ are given by 
\begin{align}
\label{eq:halfDecoupled:semiExplIterative:final}	
	u^{n+1} 
	\coloneqq u^{n+1,K}, \qquad 
	p^{n+1} 
	\coloneqq \hat p^{n+1, K}, \qquad 
	\theta^{n+1} 
	\coloneqq \hat \theta^{n+1, K}.
\end{align}
\end{subequations}
In contrast to the iterative approach of Section~\ref{sec:halfDecoupled:iterative}, we have a fixed number of inner iteration steps, independent of $\tau$. Moreover, we provide a lower bound on $K$ to guarantee first-order convergence. Note, however, that the following result is shown for the finite-dimensional setting, i.e., after a spatial discretization, such that the infinite-dimensional function spaces $\calV,\ \calQ,\ \cHQ$, and $\cHV$ are replaced by finite-dimensional vector spaces. Note, however, that we keep the original notation for convenience. 
\begin{theorem}[first-order convergence, half decoupled, iterative]
We consider the finite-dimensional setting, i.e., $\V = \cHV = \R^{n_u}$, $\Q = \cHQ = \R^{n_p}$, such that $\calA,\calB,\calBB,\calC,\calCC,\calCCC,\calD,\calDD$ are matrices. Let the solution $(u,p,\theta)$ of problem~\eqref{eq:thermoporoOperator} satisfy the smoothness conditions $u \in C^2([0,T], \R^{n_u})$, $p \in C^2([0,T], \R^{n_p})$, and $\theta \in C^2([0,T], \R^{n_\theta})$. Moreover, we assume that the rights-hand sides satisfy $f \in C^2([0,T], \R^{n_u})$, $g \in C^1([0,T], \R^{n_p})$ and $h \in C^1([0,T], \R^{n_\theta})$. For the initial data, we assume consistency conditions of the form 
\begin{align*}
	\calB p^0 
	= g(0)-\calD\calA^{-1}\dot f(0) + \calO(\tau), \qquad
	\calBB \theta^0 
	= h(0)-\calDD \calA^{-1}\dot f(0) + \calO(\tau).
\end{align*}
Then the approximations $(u^n,p^n,\theta^n)$ given by scheme~\eqref{eq:halfDecoupled:semiExplIterative} with $K$ inner iterations, satisfying 
\[
	\frac{\omega_\mathrm{HD}^K}{(2+\omega_\mathrm{HD})^{K-1}} 
	< 1
\]
with $\omega_\mathrm{HD}$ from Theorem~\ref{th:halfDecoupled:semiexplicit}, are first-order convergent, i.e., 
\[
	\|u^n - u(t^n)\|^2_\calA + \|p^n-p(t^n)\|^2_\calB + \|\theta^n - \theta(t^n)\|^2_{\calBB} 
	\leq C \tau^2. 
\]
The constant $C > 0$ only depends on the solution, the right-hand sides, the time horizon, and the material parameters.
\end{theorem}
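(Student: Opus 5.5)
The plan is to reduce scheme~\eqref{eq:halfDecoupled:semiExplIterative} to a perturbation of one step of the fully coupled implicit Euler method~\eqref{eq:impliciEuler}, and then run a discrete energy argument in the spirit of Theorem~\ref{th:convergence:auxEquation}.

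\textbf{Step 1: eliminate the displacement.} Write $\pbf = [p;\theta]$ and use the block operators $\bbB,\bbC,\bbD$ from the proof of Theorem~\ref{thm:existenceThermoPoro}. The first row of~\eqref{eq:halfDecoupled:semiExplIterative:system} gives $u^{n+1,k+1} = \calA^{-1}(\f^{n+1} + \bbD^*\pbf^{n+1,k})$. Inserting this into the remaining rows and setting $\calM \coloneqq \bbD\calA^{-1}\bbD^*$ yields
\[
	(\bbC+\tau\bbB)\hat\pbf^{n+1,k+1} = \bbC\pbf^n + \bbD u^n - \bbD\calA^{-1}\f^{n+1} - \calM\pbf^{n+1,k} + \tau\gbf^{n+1}.
\]
Let $\pbf_*^{n+1}$ be the implicit Euler update started from $x^n$, i.e.\ the fixed point of this map. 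Then the deviation satisfies $\hat\pbf^{n+1,k+1} - \pbf_*^{n+1} = -\mathcal T(\pbf^{n+1,k} - \pbf_*^{n+1})$ with $\mathcal T \coloneqq (\bbC+\tau\bbB)^{-1}\calM$.

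\textbf{Step 2: bound the iteration operator.} By Lemma~\ref{lem:propertiesBlockOperators}, $\calM$ is symmetric and satisfies $\calM \le \omega_\mathrm{HD}\,\bbC \le \omega_\mathrm{HD}(\bbC+\tau\bbB)$. Hence $\mathcal T$ is self-adjoint in the $(\bbC+\tau\bbB)$-inner product with spectrum in $[0,\omega_\mathrm{HD}]$.

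\textbf{Step 3: compute the error propagation polynomial.} Put $\delta^k \coloneqq \pbf^{n+1,k}-\pbf^{n+1}_*$. The damping step~\eqref{eq:halfDecoupled:semiExplIterative:damping} gives $\delta^{k+1} = -\gamma\mathcal T\delta^k + (1-\gamma)\delta^0$ for $k<K-1$, while the last step gives $\delta^K = -\mathcal T\delta^{K-1}$. Since $\delta^0 = \pbf^n - \pbf^{n+1}_*$, we obtain $\pbf^{n+1} - \pbf^{n+1}_* = R_K(\mathcal T)(\pbf^n-\pbf^{n+1}_*)$ for an explicit polynomial $R_K$. The choice $\gamma = 2/(2+\omega_\mathrm{HD})$ equioscillates $1-\gamma\lambda$ on $[0,\omega_\mathrm{HD}]$. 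A spectral estimate should then give
\[
	\|R_K(\mathcal T)\| \le \frac{\omega_\mathrm{HD}^K}{(2+\omega_\mathrm{HD})^{K-1}} =: \rho,
\]
so the hypothesis on $K$ is exactly $\rho<1$. I expect this bound to require checking the extremal points of $R_K$ on the spectral interval.

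\textbf{Step 4: error recursion.} Rewrite the relation from Step~3 as $(I-R_K)(\pbf^{n+1}-\pbf^{n+1}_*) = R_K(\pbf^n-\pbf^{n+1})$. The scheme is then implicit Euler plus a delay-type correction $\mathcal N(\pbf^{n+1}-\pbf^n)$, where $\mathcal N \coloneqq (\bbC+\tau\bbB+\calM)(I-R_K)^{-1}R_K$. This is analogous to the term $\calM(p^n-p^{n-1})$ in~\eqref{eq:auxEquation:Euler}. Subtract the same relation for the exact solution, where the implicit Euler defect is $\calO(\tau^2)$ by the $C^2$-regularity. Test with the discrete time derivative of the error, so that the $\bbB$-norm appears on the left as a telescoping sum. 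The correction term is then controlled by $\rho<1$, in the same way Young's inequality was absorbed using $C_\calM\le c_\calE$ in Theorem~\ref{th:convergence:auxEquation}.

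\textbf{Step 5: consistency and the main obstacle.} For the exact solution the correction term is not small: it is $\mathcal N$ applied to an $\calO(\tau)$ increment. This is the step I expect to be hardest. I would handle it by comparing consecutive steps. The difference $(\pbf(t^{n+1})-\pbf(t^n)) - (\pbf(t^n)-\pbf(t^{n-1}))$ is $\calO(\tau^2)$, and summation by parts over $n$ reduces the remaining contribution to a boundary term at $n=0$. That boundary term is $\calO(\tau)$ in the $\bbB$-norm precisely because of the assumed consistency of $p^0,\theta^0$. This is why the statement measures $p,\theta$ in the $\calB$-norms and assumes $f\in C^2$, which differentiates the $\calD\calA^{-1}\dot f$ terms once more.

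\textbf{Step 6: conclude.} A discrete Gronwall argument gives the bound $C\tau^2$ for $p$ and $\theta$. The displacement estimate follows from $u^{n}-u(t^{n}) = \calA^{-1}\bbD^*\bigl(\pbf^{n,K-1}-\pbf(t^{n})\bigr)$, together with $\pbf^{n,K-1}-\pbf^n = \calO(\tau)$.
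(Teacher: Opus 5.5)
\textbf{Gaps in Steps 3--5.} Your Steps~1--2 match the paper's reduction: block form with $\bbB,\bbC,\bbD$ and coupling constant at most $\omega_{\mathrm{HD}}$. After that, the paper simply invokes Theorem~4.4 of \cite{AltD24}. The problems arise when you try to reprove that theorem.

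Step~3 is false for the recursion you wrote. Take $\delta^{k+1}=-\gamma\mathcal{T}\delta^k+(1-\gamma)\delta^0$ with $\omega_{\mathrm{HD}}=1$, $K=3$, $\gamma=2/3$. Then $q_1(1)=-1/3$ and $q_2(1)=5/9$, so $R_3(1)=-5/9$, whereas $\omega_{\mathrm{HD}}^3/(2+\omega_{\mathrm{HD}})^2=1/9$. More generally, for $\omega_{\mathrm{HD}}<2$ this iteration converges to the nonzero fixed point $(1-\gamma)(I+\gamma\mathcal{T})^{-1}\delta^0$, so no bound decaying in $K$ can hold. With $\omega=\omega_{\mathrm{HD}}$, the hypothesis quantity is
\[
\frac{\omega^K}{(2+\omega)^{K-1}}=\max_{0\le\lambda\le\omega}\lambda\,|1-\gamma-\gamma\lambda|^{K-1}.
\]
This is the bound for $R_K(\lambda)=-\lambda(1-\gamma-\gamma\lambda)^{K-1}$, the propagator of relaxation toward the \emph{previous inner iterate}, $\pbf^{n+1,k+1}=\gamma\hat{\pbf}^{n+1,k+1}+(1-\gamma)\pbf^{n+1,k}$. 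That relaxation agrees with the printed damping step only for $k=0$. The relevant equioscillation is that of $1-\gamma\mu$ on $\mu\in[1,1+\omega]$, giving $|1-\gamma-\gamma\lambda|\le\omega/(2+\omega)$ on $[0,\omega]$. It is not that of $1-\gamma\lambda$ on $[0,\omega]$. Only under this reading does Step~3 hold, and then it is a one-line estimate.

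The more serious gap is in Steps~4--5: you never resolve $\bbD u^n$. Since $u^n=u^{n,K}$, one has $\bbD u^n=\bbD\calA^{-1}f^n+\calM\pbf^{n,K-1}$, with $\pbf^{0,K-1}\coloneqq\pbf^0$ by consistency of $u^0$. Write the inner iterates of the step $n-1\to n$ as $\pbf^{n,k}-\pbf^n_*=q_k(\mathcal{T})(\pbf^{n-1}-\pbf^n_*)$, so that $R_K=-\lambda q_{K-1}$. Then $\pbf^{n,K-1}=\pbf^n-\Psi(\mathcal{T})(\pbf^n-\pbf^{n-1})$ with $\Psi(\lambda)=(1+\lambda)q_{K-1}(\lambda)/(1-R_K(\lambda))$. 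Hence $\calM\Psi(\mathcal{T})=-\mathcal{N}$, for either damping variant. The scheme is therefore exactly
\[
(\bbC+\calM+\mathcal{N})(\pbf^{n+1}-\pbf^n)-\mathcal{N}(\pbf^n-\pbf^{n-1})+\tau\bbB\pbf^{n+1}
=\tau\gbf^{n+1}-\bbD\calA^{-1}(f^{n+1}-f^n),\qquad \pbf^{-1}\coloneqq\pbf^0 .
\]
This is a delay scheme of the form \eqref{eq:auxEquation:Euler}; for $K=1$ one has $\mathcal{N}=-\calM$ and it is literally the semi-explicit scheme.

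Consequences of this identity:
\begin{itemize}
\item For $n\ge1$, the exact solution only sees $\mathcal{N}$ applied to a second difference, giving an $\calO(\tau^2)$ defect.
\item At $n=0$ the defect is $\tau\mathcal{N}\dot{\pbf}(0)+\calO(\tau^2)$. This is the one place where the consistency of $p^0,\theta^0$, i.e.\ $\dot{\pbf}(0)=\calO(\tau)$, is used.
\item The hypothesis $\rho<1$ enters because $\mathcal{N}$ and $\bbC+\tau\bbB+\calM+\mathcal{N}=(\bbC+\tau\bbB)(I+\mathcal{T})(I-R_K(\mathcal{T}))^{-1}$ are diagonalized simultaneously by $\mathcal{T}$, with eigenvalue ratio $R_K(\lambda)$. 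This is the analogue of $C_{\calM}\le c_{\calE}$ in Theorem~\ref{th:convergence:auxEquation}.
\end{itemize}

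Your summation-by-parts fix cannot replace this cancellation. With $\mathcal{N}(\pbf(t^{n+1})-\pbf(t^n))=\calO(\tau)$ as a source, summation by parts leaves a boundary term at $n=N$, namely $\langle\mathcal{N}(\pbf(t^N)-\pbf(t^{N-1})),e^N\rangle=\calO(\tau)\|e^N\|$. After testing with $e^{n+1}-e^n$, the left-hand side only provides $\tfrac{\tau}{2}\|e^N\|_{\bbB}^2$ against it. That yields no better than $\|e^N\|_{\bbB}=\calO(1)$.
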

\begin{proof}
As in the proof of Theorem~\ref{thm:existenceThermoPoro}, we define the solution vector~$\pbf \coloneqq [p; \theta]$ and $\gbf \coloneqq [\g; \h]$ with which we can rewrite system~\eqref{eq:thermoporoOperator} in the form~\eqref{eq:theromoporoBlockFormulation}. With this, the consistency conditions of the initial data read
\[
	\bbB \pbf^0
	= \gbf(0) - \bbD \calA^{-1} \dot f(0) + \calO(\tau).
\]
Together with the condition on~$K$, we can now apply~\cite[Th.~4.4]{AltD24}.
%
\end{proof}
%
%
\section{Fully Decoupled Schemes}
\label{sec:fullyDecoupled}
We now turn to numerical schemes which fully decouple the three equations of~\eqref{eq:thermoporoOperator}, such that each can be solved separately. Again, we first gather known (iterative) schemes before we present a new semi-explicit approach. 
%
%
\subsection{M--F--H scheme from~\cite{KolV17}}
The following scheme is motivated by an operator splitting approach (see~\cite{Sam92} and~\cite{KolVV14}) and involves a parameter~$\sigma > 0$. One step of the scheme reads  
\begin{align}
	&\begin{bmatrix} 
		\calA & 0 & 0 \\ 
		\calD & \sigma \calC + \tau \calB & 0 \\ 
		\calDD & 0 & \sigma\calCCC + \tau \calBB 
	\end{bmatrix}
	\begin{bmatrix} 
		u^{n+1} \\ 
		p^{n+1} \\ 
		\theta^{n+1} 
	\end{bmatrix} \notag \\
	&\hspace{1.0cm}= 
	\begin{bmatrix}
		0 & \calD^* & \calDD^* \\ 
		\calD & (2\sigma-1)\, \calC & \calCC\phantom{^*} \\ 
		\calDD & \calCC\phantom{^*} & (2\sigma-1)\,\calCCC
	\end{bmatrix}
	\begin{bmatrix} 
		u^{n} \\ 
		p^{n} \\ 
		\theta^{n} 
	\end{bmatrix}
	+
	\begin{bmatrix}
		0 & \phantom{-}0 & \phantom{-}0 \\
		0 & (1-\sigma)\, \calC & - \calCC \\
		0 & -\calCC & (1-\sigma)\,\calCCC 
	\end{bmatrix}
	\begin{bmatrix} 
		u^{n-1} \\ 
		p^{n-1} \\ 
		\theta^{n-1} 
	\end{bmatrix}.
	\label{eq:fullyDecoupled:russia}
\end{align}
Note that this corresponds to the formulation in~\cite{KolV17}, which works without right-hand sides. Moreover, we would like to emphasize that this is a two-step scheme which needs, apart from the initial data $u^0, p^0, \theta^0$, also given values for~$p^1$ and~$\theta^1$. Due to the block triangular structure of the iteration matrix on the left-hand side, the scheme decouples. This means that we can first solve for $u^{n+1}$ and then for $p^{n+1}$ and $\theta^{n+1}$ in parallel.

For the spatially discretized setting, the scheme was analyzed in~\cite{KolV17} and it is claimed to be stable for $\sigma\ge\sigma_0$, where $\sigma_0$ neither depends on the mesh nor on the time step. However, no explicit formula for $\sigma_0$ is provided. 
\begin{remark}
One may also apply Theorem~\ref{th:convergence:auxEquation} here. 
%
%
For this, one considers the delay equation
\begin{align*}
	\sigma \begin{bmatrix}\ \calC & \phantom{-\calCC\ }\ \\ \phantom{\calCC} & \phantom{-}\calCCC\ \ \end{bmatrix}\dot {\bar \pbf} + \bbB {\bar \pbf} + \left( \bbD \calA^{-1}\bbD^* -\begin{bmatrix} (\sigma - 1)\,\calC & \calCC\ \ 
		\\ \calCC & (\sigma - 1)\,\calCCC\ \ 
	\end{bmatrix} \right)\dot {\bar \pbf}_\tau
	&= \gbf - \bbD \calA^{-1}\dot f,
\end{align*}
where we use the notation $\bar \pbf$, $\gbf$ as introduced in the proof of Theorem~\ref{thm:existenceThermoPoro}. 
Assuming $\sigma\ge1$, the resulting sufficient conditions, however, yield upper bounds on $\sigma$ 
which seem quite unrealistic. 
\end{remark}
\subsection{H--F--M scheme from~\cite{BruABNR20}}
We now turn to fully decoupled iterative methods as introduced in~\cite{BruABNR20}. There exist several variants, which consider the successive (approximate) solution of the different subsystems with a certain ordering. As already described in Section~\ref{sec:halfDecoupled:iterative}, the schemes introduced in~\cite{BruABNR20} are originally formulated for the nonlinear `five-field' formulation and adopted to the here considered system~\eqref{eq:thermoporoBilinear}. 
%

Let $L_\theta$ and $L_p$ denote stabilization parameters and $(u^n, p^n, \theta^n)$ the approximation from the previous time step. Then, given $(u^{n+1,k}, p^{n+1,k}, \theta^{n+1,k})$, we compute 
%
%
%
the next inner iterate $(u^{n+1,k}, p^{n+1,k}, \theta^{n+1,k})$ as the solution of  
\begin{align}
	&\begin{bmatrix} 
		\calA & -\calD^* & -\calDD^* \\ 
		0 & \calC + L_p + \tau \calB & - \calCC\phantom{^*} \\ 
		0 & \phantom{-}0\phantom{^*} & \calCCC + L_\theta + \tau \calBB 
	\end{bmatrix}
	\begin{bmatrix} 
		u^{n+1,k+1} \\ 
		p^{n+1,k+1} \\ 
		\theta^{n+1,k+1} 
	\end{bmatrix} \notag \\
	&\hspace{2.7cm}= 
	\begin{bmatrix} 
		\phantom{-}0 & 0 & 0 \\ 
		-\calD & L_p & 0 \\ 
		-\calDD & \calCC & L_\theta 
	\end{bmatrix}
	\begin{bmatrix} 
		u^{n+1,k} \\ 
		p^{n+1,k} \\ 
		\theta^{n+1,k} 
	\end{bmatrix}
	+
	\begin{bmatrix} 
		0 & \phantom{-}0 & \phantom{-}0 \\ 
		\calD & \phantom{-}\calC & - \calCC \\ 
		\calDD & -\calCC & \phantom{-}\calCCC 
	\end{bmatrix}
	\begin{bmatrix} 
		u^{n} \\
		p^{n} \\ 
		\theta^{n} 
	\end{bmatrix}
	+ 
	\begin{bmatrix} 
		\phantom{\tau}\f^{n+1} \\ 
		\tau \g^{n+1} \\ 
		\tau \h^{n+1} 
	\end{bmatrix}
	\label{eq:fullyDecoupled:norway}
\end{align}
for $k = 0, \dots, K$. Again, the needed computations in each (inner) step decouple as the matrix on the left-hand side has a block triangular structure. Hence, one can first solve for the temperature, then for the pressure, and finally for the displacement. Again, the following convergence result of the inner iteration can be shown. 
\begin{theorem}[cf.~{\cite[Rem.~4.3]{BruABNR20}}]\label{thm:norwayConvergence}
Assume sufficiently large stabilization parameters $L_\theta$ and $L_p$. Then, iteration~\eqref{eq:fullyDecoupled:norway} defines a contraction.
\end{theorem}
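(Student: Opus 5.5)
We view one inner step of \eqref{eq:fullyDecoupled:norway} as an affine fixed-point map $x^{n+1,k}\mapsto x^{n+1,k+1}$ and study the error $e^{k} \coloneqq x^{n+1,k}-x^{n+1}_\ast$. Here $x^{n+1}_\ast = [u_\ast;p_\ast;\theta_\ast]$ is the implicit Euler solution of \eqref{eq:impliciEuler}, which exists by the well-posedness result for the implicit Euler scheme. A quick check shows that $x^{n+1}_\ast$ is a fixed point: substituting $x^{n+1,k}=x^{n+1,k+1}=x^{n+1}_\ast$ reproduces \eqref{eq:impliciEuler}. Hence $e^k=[e_u^k;e_p^k;e_\theta^k]$ satisfies the homogeneous iteration
\begin{align*}
	\calA e_u^{k+1} - \calD^* e_p^{k+1} - \calDD^* e_\theta^{k+1} &= 0,\\
	(\calC+L_p+\tau\calB)e_p^{k+1} - \calCC e_\theta^{k+1} &= -\calD e_u^k + L_p e_p^k,\\
	(\calCCC+L_\theta+\tau\calBB)e_\theta^{k+1} &= -\calDD e_u^k + \calCC e_p^k + L_\theta e_\theta^k.
\end{align*}
The first equation lets us eliminate $e_u^k=\calA^{-1}(\calD^*e_p^k+\calDD^*e_\theta^k)=\calA^{-1}\bbD^*\pbf_e^k$. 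The iteration then reduces to one on $\pbf_e=[e_p;e_\theta]\in\cHQsquare$ alone, and it suffices to prove a contraction in a suitable norm on the $(p,\theta)$ part, since $u$ is controlled by $\|\calA^{-1}\bbD^*\|$.

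The key step is an energy estimate in the spirit of the fixed-stress analysis. We test the second equation with $e_p^{k+1}$ and the third with $e_\theta^{k+1}$, add them, and use the polarization identity $2\langle L(a-b),a\rangle = \|a\|_L^2-\|b\|_L^2+\|a-b\|_L^2$. We rewrite the right-hand side as $L_p(e_p^k-e_p^{k+1})+L_pe_p^{k+1}$ and similarly for $\theta$, and add and subtract $\bbD\calA^{-1}\bbD^*\pbf_e^{k+1}$. This yields
\[
	\|\pbf_e^{k+1}\|_{\bbC_L}^2 + \tau\|\pbf_e^{k+1}\|_\bbB^2 + \|\pbf_e^{k+1}\|_{\calM}^2
	\le \tfrac12\big(\|\pbf^k_e\|_{L}^2 - \|\pbf^{k+1}_e\|_L^2 - \|\pbf_e^{k+1}-\pbf_e^k\|_L^2\big) + \langle \calM(\pbf_e^{k+1}-\pbf_e^k),\pbf_e^{k+1}\rangle + R,
\]
where $\calM=\bbD\calA^{-1}\bbD^*$ and $\|\cdot\|_L$ is the norm induced by $\mathrm{diag}(L_p,L_\theta)$. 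The remainder $R$ is the part of the lower-triangular split coming from the off-diagonal $\calCC$: in the original ordering $-\calCC e_\theta^{k+1}$ appears in the pressure equation and $\calCC e_p^k$ in the temperature equation. This gives the cross term $\langle\calCC(e_p^{k}-e_p^{k+1}),e_\theta^{k+1}\rangle$.

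The delicate part is absorbing both $\langle\calM(\pbf_e^{k+1}-\pbf_e^k),\pbf_e^{k+1}\rangle$ and the $\calCC$ cross term. We plan to handle them with Young's inequality, bounding by $\frac{1}{4\varepsilon}\|\pbf^{k+1}_e-\pbf^k_e\|^2_{\cHQsquare}$ plus $\varepsilon$ times a piece controlled by the left-hand side. For the $\calM$ term we use $C_\calM\le C_{\bbD}^2/c_a$ against $\|\pbf_e^{k+1}\|_\calM^2$. For the $\calCC$ term we use $\cc_0\|\cdot\|$ against the ellipticity of $\bbC$ from Lemma~\ref{lem:propertiesBlockOperators}, which relies on Assumption~\ref{ass:chat}. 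The difference terms are then absorbed by $\frac12\|\pbf_e^{k+1}-\pbf_e^k\|_L^2$ as soon as $L_p,L_\theta \ge L_0 \coloneqq C(C_{\bbD}^2/c_a+\cc_0)$. This is exactly where ``sufficiently large stabilization parameters'' enters, and we expect fixing the constants here to be the main obstacle.

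After absorption we obtain $\|\pbf_e^{k+1}\|_L^2 + 2c\|\pbf^{k+1}_e\|^2_{\cHQsquare}\le\|\pbf^k_e\|_L^2$ with some $c>0$ coming from $c_{\bbC}$. Since $\|\cdot\|_L\le \max(L_p,L_\theta)^{1/2}\|\cdot\|_{\cHQsquare}$, this gives
\[
	\|\pbf_e^{k+1}\|_L^2 \le \Big(1+\tfrac{2c}{\max(L_p,L_\theta)}\Big)^{-1}\|\pbf_e^k\|_L^2,
\]
a contraction factor strictly below one, independent of $\tau$ (the $\tau\calB$ terms only help). Finally, through $e_u^{k+1}=\calA^{-1}\bbD^*\pbf_e^{k+1}$ the displacement error follows the same geometric decay, so the full map is a contraction.
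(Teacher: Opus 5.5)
Your argument is correct. The paper gives no proof of its own. It only refers to \cite[Rem.~4.3]{BruABNR20}, where the scheme is analysed in its original nonlinear five-field form, and the transfer to the linear three-field operator form~\eqref{eq:fullyDecoupled:norway} is not carried out.

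You supply exactly this transfer as a self-contained $L$-scheme/fixed-stress energy argument. The displacement is slaved via $e_u^k=\calA^{-1}\bbD^*\pbf_e^k$, the symmetric operators $\bbC$ and $\calM=\bbD\calA^{-1}\bbD^*$ stay on the left, and the lagged coupling $-\langle\calCC(e_p^{k+1}-e_p^k),e_\theta^{k+1}\rangle$ is the only non-symmetric perturbation. What this buys is explicit thresholds and a $\tau$-independent contraction factor in precisely the setting of the paper.

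The constants you flag as the main obstacle are routine. Write $\delta=\pbf_e^{k+1}-\pbf_e^k$, and take unindexed norms in $\cHQ$ resp.\ $\cHQsquare$. Then
\begin{align*}
	\langle\calM\delta,\pbf_e^{k+1}\rangle
	&\le \|\pbf_e^{k+1}\|_\calM^2+\tfrac14\,\|\delta\|_\calM^2,\\
	\cc_0\,\|\delta_p\|\,\|e_\theta^{k+1}\|
	&\le \tfrac{c_{\bbC}}{2}\,\|e_\theta^{k+1}\|^2+\tfrac{\cc_0^2}{2c_{\bbC}}\,\|\delta_p\|^2 .
\end{align*}
Together with $\|\delta\|_\calM^2\le (C_{\bbD}^2/c_a)\|\delta\|^2$, the thresholds $L_\theta\ge C_{\bbD}^2/(2c_a)$ and $L_p\ge C_{\bbD}^2/(2c_a)+\cc_0^2/c_{\bbC}$ suffice. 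They yield $\|\pbf_e^{k+1}\|_L^2\le\big(1+c_{\bbC}/\max(L_p,L_\theta)\big)^{-1}\|\pbf_e^k\|_L^2$. Only $L_p$ carries the extra term because the temperature step uses the lagged pressure; for~\eqref{eq:fullyDecoupled:norway2} the roles of $L_p$ and $L_\theta$ are swapped.

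Two small points. First, your $\|\cdot\|_{\bbC_L}$ has to be the norm induced by the symmetric $\bbC$ for the remainder $R$ to be the one you state; your displayed relation is then in fact an identity. Second, the elimination of $e_u^k$ is valid only for $k\ge1$, since an arbitrary initial guess $u^{n+1,0}$ need not satisfy the mechanics equation. There are two ways to handle this:
\begin{itemize}
\item count the contraction from $k=1$ on; or
\item carry the residual $r^k=\calA e_u^k-\bbD^*\pbf_e^k$ through the estimate, and measure errors in $\|\pbf\|_L^2+\gamma\,\|\calA u-\bbD^*\pbf\|_{\V^*}^2$ with $\gamma$ sufficiently large. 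Since the residual vanishes after one step, the full map is then a contraction in an equivalent norm on $\V\times\cHQsquare$.
\end{itemize}
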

Again, we obtain convergence of the overall method as long as the number of inner iteration steps and the stabilization parameters are sufficiently large. 
%
%
\subsection{F--H--M scheme from~\cite{BruABNR20}}
For comparison, we present a second scheme from~\cite{BruABNR20}. Here, the ordering is changed and one first computes an update for the pressure, followed by the temperature and the displacement.  
In matrix form, the scheme reads 
\begin{align}
	&\begin{bmatrix} 
		\calA & -\calDD^*& -\calD^* \\ 
		0 & \calCCC + L_\theta + \tau \calBB & -\calCC\phantom{^*} \\
		0 & \phantom{-}0\phantom{^*} & \calC + L_p + \tau \calB 
	\end{bmatrix}
	\begin{bmatrix} 
		u^{n+1,k+1} \\ 
		\theta^{n+1,k+1} \\ 
		p^{n+1,k+1} 
	\end{bmatrix} \notag\\
	&\hspace{2.2cm}= 
	\begin{bmatrix} 
		0 & 0 & 0 \\ 
		-\calDD & L_\theta & 0 \\ 
		-\calD & \calCC & L_p 
	\end{bmatrix}
	\begin{bmatrix} 
		u^{n+1,k} \\ 
		\theta^{n+1,k} \\ 
		p^{n+1,k} 
	\end{bmatrix}
	+
	\begin{bmatrix} 
		0 & 0 & 0 \\ 
		\calDD &\phantom{-}\calCCC & -\calCC \\ 
		\calD & - \calCC & \phantom{-}\calC 
	\end{bmatrix}
	\begin{bmatrix} 
		u^{n} \\ 
		\theta^{n} \\ 
		p^{n} 
	\end{bmatrix}
	+ 
	\begin{bmatrix} 
		\phantom{\tau}\f^{n+1} \\ 
		\tau \h^{n+1} \\ 
		\tau \g^{n+1} 
	\end{bmatrix}
	\label{eq:fullyDecoupled:norway2}
\end{align}
for $k = 0, \dots, K$. Therein, $L_\theta$ and $L_p$ denote again stabilization parameters and a convergence result as in Theorem~\ref{thm:norwayConvergence} holds. 
%
%
\subsection{Fully decoupled semi-explicit discretization (M--F--H)}
One possible motivation for the half-decoupled scheme~\eqref{eq:halfDecoupled:semiExpl} was the interpretation as implicit Euler discretization of a delay equation, cf.~the proof of Theorem~\ref{th:halfDecoupled:semiexplicit}. That is, we consider an implicit Euler discretization of 
\[
	\calA u - \calD^*p_\tau - \calDD^* \theta_\tau 
	= \f
\]
rather than~\eqref{eq:thermoporoOperator:a}, where the delay time $\tau$ equals the time step size of the discretization scheme. 
Similarly, we may also replace~\eqref{eq:thermoporoOperator:b} by
\[
	\calD \dot u + \calC \dot p - \calCC \dot \theta_\tau + \calB p 
	= \g.
\]
Since this implements a delay term for the derivative of $\theta$, this results in a two-step scheme, namely 
\begin{align*}
	&\begin{bmatrix} 
	\calA & \phantom{-}0 & 0 \\ 
	\calD & \calC + \tau \calB & 0 \\ 
	\calDD & -\calCC & \calCCC + \tau \calBB 
	\end{bmatrix}
	\begin{bmatrix} u^{n+1} \\ p^{n+1} \\ \theta^{n+1} \end{bmatrix}\\
	&\hspace{2.5cm}= 
	\begin{bmatrix} 
	0 & \phantom{-}\calD^* & \calDD^* \\ 
	\calD & \phantom{-}\calC\phantom{^*} & \calCC\phantom{^*} \\ 
	\calDD & -\calCC\phantom{^*} & \calCCC\phantom{^*}	
	\end{bmatrix}
	\begin{bmatrix} u^{n} \\ p^{n} \\ \theta^{n} \end{bmatrix}
	+ 
	\begin{bmatrix} 0 & 0 & \phantom{-}0 \\ 0 & 0 & -\calCC \\ 0 & 0 & \phantom{-}0 \end{bmatrix}
	\begin{bmatrix} u^{n-1} \\ p^{n-1} \\ \theta^{n-1} \end{bmatrix}
	+ 
	\begin{bmatrix} \f^{n+1} \\ \tau \g^{n+1} \\ \tau \h^{n+1} \end{bmatrix}.
\end{align*}
This scheme obviously decouples, since the iteration matrix is block triangular. Nonetheless, we propose to apply an analog manipulation also to the third equation of the system, i.e., to~\eqref{eq:thermoporoOperator:c}. This provides a certain symmetry and yields the scheme 
\begin{align}
	&\begin{bmatrix} 
		\calA & 0 & 0 \\ 
		\calD & \calC + \tau \calB & 0 \\ 
		\calDD & 0& \calCCC + \tau \calBB 
	\end{bmatrix}
	\begin{bmatrix} 
		u^{n+1} \\ 
		p^{n+1} \\ 
		\theta^{n+1} 
	\end{bmatrix} \notag \\
	&\hspace{2.5cm}= 
	\begin{bmatrix} 
		0 & \calD^* & \calDD^* \\ 
		\calD & \calC\phantom{^*} & \calCC\phantom{^*} \\ 
		\calDD & \calCC\phantom{^*} & \calCCC\phantom{^*}
	\end{bmatrix}
	\begin{bmatrix} 
		u^{n} \\ 
		p^{n} \\ 
		\theta^{n} 
	\end{bmatrix}
	+ 
	\begin{bmatrix} 
		0 & \phantom{-}0 & \phantom{-}0 \\ 
		0 & \phantom{-}0 & -\calCC \\ 
		0 & -\calCC & \phantom{-}0 
	\end{bmatrix}
	\begin{bmatrix} 
		u^{n-1} \\ 
		p^{n-1} \\ 
		\theta^{n-1} 
	\end{bmatrix}
	+ 
	\begin{bmatrix} 
		\phantom{\tau}\f^{n+1} \\ 
		\tau \g^{n+1} \\ 
		\tau \h^{n+1} 
	\end{bmatrix}. 
	\label{eq:fullyDecoupled:semiExpl}
\end{align}
Note that this scheme coincides with~\eqref{eq:fullyDecoupled:russia} for $\sigma = 1$.
\subsubsection{Corresponding delay system}
Based on the operator form~\eqref{eq:thermoporoOperator} and as motivated above, we consider the delay system 
\begin{subequations}
\label{eq:delay:thermoporoOperator}
\begin{align}
	\calA {\bar u} - \calD^*{\bar p}_\tau - \calDD^* {\bar \theta}_\tau 
	&= \f, \label{eq:delay:thermoporoOperator:a} \\
	\calD \dot {\bar u} + \calC \dot {\bar p} - \calCC \dot {\bar \theta}_\tau + \calB {\bar p} 
	&= \g, \label{eq:delay:thermoporoOperator:b} \\
	\calDD \dot {\bar u} + \calCCC \dot {\bar \theta} - \calCC \dot {\bar p}_\tau  + \calBB {\bar \theta}
	&= \h. \label{eq:delay:thermoporoOperator:c} 
\end{align}
\end{subequations}
As initial data, we set ${\bar p}(0) = p^0$ and ${\bar \theta} = \theta^0$ as before. Moreover, we need history functions for $p$ and $\theta$ on the time interval $[-\tau,0]$. For this, we introduce smooth functions $\Phi_p, \Phi_\theta\colon [-\tau,0] \to \Q$ with 
\begin{align}
	\label{eq:history:ptheta}
	\Phi_p(-\tau) = \Phi_p(0) = p^0,\qquad
	\Phi_\theta(-\tau) = \Phi_\theta(0) = \theta^0
\end{align}
and conclude that ${\bar u}(0) = u(0) = u^0$ is again a consistent initial value for the deformation variable. 
\begin{lemma}[Difference of original and delay system]
Assume sufficiently smooth right-hand sides~$\f$, $\g$, $\h$ and smooth history functions $\Phi_p$, $\Phi_\theta$ that fulfill~\eqref{eq:history:ptheta} such that the solution~$({\bar u}, {\bar p}, {\bar \theta})$ of the delay system~\eqref{eq:delay:thermoporoOperator} satisfies~${\bar p}$, ${\bar\theta} \in W^{2,\infty}(\cHQ)$. Further assume $3\,\cc_0 \le 2 c_0$ as well as $3\,\cc_0 \le 2 \ccc_0$. Then the solution~$(u,p,\theta)$ of~\eqref{eq:thermoporoOperator} and $({\bar u}, {\bar p}, {\bar \theta})$ only differ by a term of order~$\tau$. More precisely, we have
\[
	\|{\bar p}(t) - p(t)\|_\Q^2 + \|{\bar \theta}(t) - \theta(t)\|_\Q^2 
	\lesssim \tau^2\, t\, \Big[ \| \ddot{\bar p}\|_{L^\infty(\cHQ)}^2 + \| \ddot{\bar \theta}\|_{L^\infty(\cHQ)}^2 \Big]
\]
and 
\[
	\|{\bar u}(t) - u(t)\|_\V^2 
	\lesssim \tau^2\, t\, \Big[ \| {\bar p}\|_{W^{2,\infty}(\cHQ)}^2 + \| {\bar \theta}\|_{W^{2,\infty}(\cHQ)}^2 \Big]
\]
for almost all $t\in [0,T]$.
\end{lemma}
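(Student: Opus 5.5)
We compare the two systems through their differences $e_u \coloneqq {\bar u} - u$, $e_p \coloneqq {\bar p} - p$, $e_\theta \coloneqq {\bar\theta} - \theta$. Subtracting \eqref{eq:thermoporoOperator} from \eqref{eq:delay:thermoporoOperator}, we write every delayed quantity as the undelayed one minus a defect, i.e., ${\bar p}_\tau = {\bar p} - \delta_p$ with $\delta_p(t) \coloneqq {\bar p}(t) - {\bar p}(t-\tau)$, and analogously $\delta_\theta$, $\dot\delta_p$, $\dot\delta_\theta$. This yields the error system
\begin{align*}
	\calA e_u - \calD^* e_p - \calDD^* e_\theta &= -\calD^*\delta_p - \calDD^*\delta_\theta,\\
	\calD \dot e_u + \calC \dot e_p - \calCC \dot e_\theta + \calB e_p &= -\calCC \dot\delta_\theta,\\
	\calDD \dot e_u + \calCCC \dot e_\theta - \calCC \dot e_p + \calBB e_\theta &= -\calCC \dot\delta_p,
\end{align*}
with vanishing initial data (by \eqref{eq:history:ptheta} and the consistency of $u^0$). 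Since ${\bar p},{\bar\theta}\in W^{2,\infty}(\cHQ)$, Taylor's theorem gives $\|\dot\delta_p\|_{\cHQ}\le \tau\|\ddot{\bar p}\|_{L^\infty(\cHQ)}$, similarly for $\theta$, and $\|\delta_p\|_\cHQ \le \tau \|\dot{\bar p}\|_{L^\infty(\cHQ)}$. On $[0,\tau]$ the history functions enter, but they are smooth and satisfy \eqref{eq:history:ptheta}, so the same bounds hold there.

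Next comes the energy estimate. I would differentiate the first error equation in time, test it with $\dot e_u$, test the second with $e_p$... but the natural pairing is instead with $\dot e_p$, $\dot e_\theta$. I therefore test the second and third equations with $\dot e_p$ and $\dot e_\theta$, and the time-differentiated first equation with $\dot e_u$; adding them cancels the $\calD,\calDD$ coupling terms. This leaves
\[
	\|\dot e_u\|_\calA^2 + \langle \bbC \dot{\mathbf e}, \dot{\mathbf e}\rangle + \tfrac12\tfrac{\mathrm d}{\mathrm dt}\big(\|e_p\|_\calB^2 + \|e_\theta\|_{\calBB}^2\big) = \text{(defect terms)},
\]
with defects $-\langle\calCC\dot\delta_\theta,\dot e_p\rangle - \langle\calCC\dot\delta_p,\dot e_\theta\rangle - \langle\calD^*\dot\delta_p+\calDD^*\dot\delta_\theta,\dot e_u\rangle$. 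The last one is absorbed into $\|\dot e_u\|^2_\calA$ by Young's inequality. The first two are bounded by $\cc_0(\|\dot\delta_\theta\|\|\dot e_p\|+\|\dot\delta_p\|\|\dot e_\theta\|)$ and absorbed via Young's inequality into the $\bbC$ term.

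The main obstacle is this last absorption, and it is where the conditions $3\cc_0\le 2c_0$, $3\cc_0\le 2\ccc_0$ must enter. Using only $\langle\bbC \mathbf q,\mathbf q\rangle \ge (c_0-\cc_0)\|q_1\|^2 + (\ccc_0-\cc_0)\|q_2\|^2$ from Lemma~\ref{lem:propertiesBlockOperators}, I would apply Young's inequality in the form $\cc_0 ab \le \tfrac{\cc_0}{2}a^2 + \tfrac{\cc_0}{2}b^2$. The coercivity remaining on $\dot e_p$ is then $c_0 - \tfrac32\cc_0 \ge 0$, which is exactly the assumed condition, and similarly for $\dot e_\theta$. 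The price is the term $\tfrac{\cc_0}{2}(\|\dot\delta_p\|^2+\|\dot\delta_\theta\|^2)\lesssim \tau^2(\|\ddot{\bar p}\|^2_{L^\infty}+\|\ddot{\bar\theta}\|^2_{L^\infty})$ on the right-hand side. If the bookkeeping does not close exactly, I would instead do the split directly on the $2\times2$ quadratic form, avoiding the lossy Lemma bound. Integrating from $0$ to $t$ with zero initial errors, and using the ellipticity of $\calB$, $\calBB$, gives the asserted $\Q$-estimate $\lesssim \tau^2 t[\cdots]$.

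Finally, the displacement bound follows from the first error equation: $c_a\|e_u\|_\V \le C_d\|e_p\|+C_{\dd}\|e_\theta\|+C_d\|\delta_p\|+C_{\dd}\|\delta_\theta\|$. The first two terms are controlled by the previous estimate. The defect terms contribute $\tau^2\|\dot{\bar p}\|^2_{L^\infty}$, which explains the full $W^{2,\infty}$ norms in the statement. I would bound these by $\tau^2$ times the $W^{2,\infty}$ norm, and bound $t$ below by $\tau$ where needed, or accept the stated form up to constants depending on $T$.
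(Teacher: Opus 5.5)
Your energy argument is the paper's proof: the same error system, the same testing (time-differentiated first equation with $\dot e_u$, second and third with $\dot e_p$, $\dot e_\theta$, so the $\calD$, $\calDD$ couplings cancel), the same Young splitting leaving $c_0-\tfrac32\cc_0\ge 0$ and $\ccc_0-\tfrac32\cc_0\ge 0$, and the same treatment of $e_u$ by testing the first error equation with $e_u$.

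One justification is off. On $[0,\tau]$, the bound $\|\dot\delta_p\|_{\cHQ}\le\tau\|\ddot{\bar p}\|_{L^\infty(\cHQ)}$ does not follow from smoothness of $\Phi_p$ together with \eqref{eq:history:ptheta}, since that condition only matches values at $t=0$. The bound needs the function obtained by gluing $\Phi_p$ to ${\bar p}$ to be $W^{2,\infty}$ across $t=0$, in particular with matching first derivatives. So the regularity hypothesis has to be read on $[-\tau,T]$; the paper's Taylor expansions, with intermediate points in $[t-\tau,t]$, tacitly do the same.

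The step that does not go through as written is the factor $t$ in the displacement bound. Your estimate $\|\delta_p(t)\|_{\cHQ}\le\tau\|\dot{\bar p}\|_{L^\infty(\cHQ)}$ produces $\tau^2\|\dot{\bar p}\|^2_{L^\infty(\cHQ)}$ without $t$, and neither proposed patch repairs this. Using $t\ge\tau$ only gives $\tau^2\le\tau t$, which costs a power of $\tau$. No $T$-dependent constant can create a factor $t$ as $t\to0$. (The paper's proof passes over the same point when it says that combining the estimates finishes the proof.)

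The missing observation is that \eqref{eq:history:ptheta} gives $\delta_p(0)={\bar p}(0)-\Phi_p(-\tau)=p^0-p^0=0$. This is the same fact that yields $e_u(0)=0$. Hence $\delta_p(t)=\int_0^t\dot\delta_p(s)\,\mathrm{d}s$, so $\|\delta_p(t)\|_{\cHQ}\le t\,\tau\,\|\ddot{\bar p}\|_{L^\infty(\cHQ)}$. Multiplying this by your first bound gives
\[
	\|\delta_p(t)\|_{\cHQ}^2
	\le \tau^2\, t\, \|\dot{\bar p}\|_{L^\infty(\cHQ)}\, \|\ddot{\bar p}\|_{L^\infty(\cHQ)}
	\le \tau^2\, t\, \|{\bar p}\|^2_{W^{2,\infty}(\cHQ)},
\]
and likewise for $\delta_\theta$. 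Inserted into your inequality for $c_a\|e_u\|_\V$, this gives exactly the stated displacement estimate, and it explains why the full $W^{2,\infty}$ norms appear there.
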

\begin{proof}
Considering the difference of~\eqref{eq:thermoporoOperator} and~\eqref{eq:delay:thermoporoOperator}, we observe that the errors
\[
	e_u \coloneqq \bar u - u, \qquad
	e_p \coloneqq \bar p- p, \qquad
	e_\theta \coloneqq \bar \theta - \theta
\]
satisfy the system
\begin{subequations}
\label{eq:thermoporoOperator:err}
\begin{align}	
	\calA e_u - \calD^*e_p - \calDD^* e_\theta 
	&= - \calD^*(\bar p - {\bar p}_\tau) - \calDD^* ({\bar \theta} - {\bar \theta}_\tau), \label{eq:thermoporoOperator:err:a} \\
	\calD \dot e_u + \calC \dot e_p - \calCC \dot e_\theta + \calB e_p 
	&= - \calCC\,(\dot{\bar \theta} - \dot{\bar \theta}_\tau), \label{eq:thermoporoOperator:err:b} \\
	\calDD \dot e_u + \calCCC \dot e_\theta - \calCC \dot e_p + \calBB e_\theta 
	&= - \calCC\,(\dot{\bar p} - \dot{\bar p}_\tau). \label{eq:thermoporoOperator:err:c} 
\end{align}
\end{subequations}
Due to the special choice of the history functions, the initial errors satisfy $e_u(0) = 0$ and $e_p(0) = e_\theta(0) = 0$. Using first-order Taylor expansions, the right-hand side of~\eqref{eq:thermoporoOperator:err:a} can be written as 
\[
- \calD^*(\bar p - {\bar p}_\tau) - \calDD^* ({\bar \theta} - {\bar \theta}_\tau) = - \tau\calD^* \dot{\bar p}(\xi_p) - \tau\calDD^* \dot{\bar \theta} (\xi_\theta)
\]
for some  $\xi_p,\xi_\theta\in[t-\tau, t]$. 
In the same way, we can replace the right-hand sides of~\eqref{eq:thermoporoOperator:err:b} and~\eqref{eq:thermoporoOperator:err:c} using
\[
	\calCC\,(\dot{\bar \theta} - \dot{\bar \theta}_\tau)
	= \tau\,\calCC \ddot{\bar \theta} (\zeta_\theta), \qquad 
	\calCC\,(\dot{\bar p} - \dot{\bar p}_\tau)
	= \tau\,\calCC \ddot{\bar p} (\zeta_p)
\]
for some $\zeta_p, \zeta_\theta \in[t-\tau, t]$. 
Considering the derivatives of~\eqref{eq:thermoporoOperator:a} and~\eqref{eq:delay:thermoporoOperator:a}, we further obtain the equation 
\begin{align}	
	\calA \dot e_u - \calD^*\dot e_p - \calDD^* \dot e_\theta 
	= - \tau\calD^* \ddot{\bar p}(\xi'_p) - \tau\calDD^* \ddot{\bar \theta} (\xi'_\theta) \label{eq:thermoporoOperator:err:d} 
\end{align}
for some $\xi'_p,\xi'_\theta\in[t-\tau, t]$.

Next, we take the sum of~\eqref{eq:thermoporoOperator:err:d}, \eqref{eq:thermoporoOperator:err:b}, and~\eqref{eq:thermoporoOperator:err:c} with test functions $\dot e_u$, $\dot e_p$, and~$\dot e_\theta$, respectively. Using Young's inequality, we estimate 
\begin{align*}
	\|\dot e_u\|_a^2 + \|\dot e_p&\|_c^2 + \|\dot e_\theta\|_{\ccc}^2 
	+ \frac12 \ddt\, \|e_p\|_b^2 + \frac12 \ddt\, \|e_\theta\|_{\bb}^2 \\
	&= 2\, \cc(\dot e_\theta, \dot e_p) 
	- \tau\, \Big[ d(\dot e_u, \ddot{\bar p}(\xi'_p)) 
	+ \dd(\dot e_u, \ddot{\bar \theta}(\xi'_\theta)) 
	- \cc(\dot e_p, \ddot {\bar \theta}(\zeta_\theta)) 
	- \cc(\dot e_\theta, \ddot {\bar p}(\zeta_p)) \Big] \\
	&\le 2\, \cc_0 \|\dot e_\theta\|_\cHQ \|\dot e_p\|_\cHQ
	+ \tau\, C_d\, \|\dot e_u\|_\V \|\ddot {\bar p}\|_{L^\infty(\cHQ)} 
	+ \tau\, C_{\dd}\, \|\dot e_u\|_\V \|\ddot {\bar \theta}\|_{L^\infty(\cHQ)} \\
	&\qquad+ \tau\, \cc_0\, \|\dot e_p\|_\cHQ\|\ddot {\bar \theta}\|_{L^\infty(\cHQ)} 
	+ \tau\, \cc_0\, \|\dot e_\theta\|_\cHQ\|\ddot {\bar p}\|_{L^\infty(\cHQ)}\\
	&\le \cc_0\, \Big[ \|\dot e_\theta\|^2_\cHQ + \|\dot e_p\|^2_\cHQ \Big] \\
	&\qquad+ \frac{c_a}{2}\, \|\dot e_u\|^2_\V  
	+ \frac{\tau^2}{2} \frac{C_d^2}{c_a}\, \|\ddot {\bar p}\|^2_{L^\infty(\cHQ)}
	+ \frac{c_a}{2}\, \|\dot e_u\|_\V^2  
	+ \frac{\tau^2}{2} \frac{C_{\dd}}{c_a}\, \|\ddot {\bar \theta}\|_{L^\infty(\cHQ)}^2 \\
	&\qquad+ \frac{\cc_0}{2}\, \Big[ \|\dot e_p\|^2_\cHQ +  \tau^2\,\|\ddot {\bar \theta}\|_{L^\infty(\cHQ)}^2 \Big] 
	+ \frac{\cc_0}{2}\, \Big[ \|\dot e_\theta\|^2_\cHQ + \tau^2\,\|\ddot {\bar p}\|_{L^\infty(\cHQ)}^2 \Big]\\
	&= c_a \|\dot e_u\|^2_\V 
	+ \frac32\, \cc_0\, \Big[ \|\dot e_\theta\|^2_\cHQ + \|\dot e_p\|^2_\cHQ \Big] 
	+ C\, \tau^2 \Big[ \|\ddot {\bar p}\|_{L^\infty(\cHQ)}^2 + \|\ddot {\bar \theta}\|_{L^\infty(\cHQ)}^2 \Big].
\end{align*} 
Due to the ellipticity of $a$ and the assumptions on~$c_0$ and~$\ccc_0$, we can absorb the terms including~$\dot e_u, \dot e_p, \dot e_\theta$. An integration over the time interval $[0,t]$ then yields the stated estimate for the pressure and temperature variables. 

For an estimate of~$e_u$, we consider~\eqref{eq:thermoporoOperator:err:a} with test function~$e_u$, leading to 
\begin{align*}
	\| e_u\|_a^2
	&= d(e_u,e_p) + \dd(e_u, e_\theta)
	- \tau d(e_u, \dot{\bar p}(\xi_p)) - \tau \dd(e_u, \dot{\bar \theta} (\xi_\theta)) \\
	&\le C_d\, \|e_u\|_\V \|e_p\|_\cHQ + C_{\dd}\, \|e_u\|_\V \|e_\theta\|_\cHQ
	\\&\hspace{2.cm}+ \tau C_d\, \| e_u\|_\V \|\dot{\bar p}\|_{L^\infty(\cHQ)} + \tau C_{\dd}\, \| e_u\|_\V \|\dot{\bar \theta}\|_{L^\infty(\cHQ)} \\
	&\le \frac12\, \|e_u\|_a^2  
	+ 2\, \frac{C_d^2}{c_a} \Big( \|e_p\|^2_\cHQ + \tau^2\, \|\dot{\bar p}\|^2_{L^\infty(\cHQ)} \Big)
	+ 2\, \frac{C_{\dd}^2}{c_a} \Big( \|e_\theta\|^2_\cHQ
	 + \tau^2\, \|\dot{\bar \theta}\|^2_{L^\infty(\cHQ)} \Big).
\end{align*}
Hence, a combination with the above estimates for pressure and temperature finishes the proof. 
\end{proof}
\subsubsection{Convergence}
Finally, we prove convergence of the scheme~\eqref{eq:fullyDecoupled:semiExpl} by applying the abstract result from Section~\ref{sec:convergence}.  
\begin{theorem}[first-order convergence, fully decoupled]
\label{th:fullyDecoupled:semiexplicit}
Assume sufficiently smooth right-hand sides as well as 
\[
	\min(c^2_d, c^2_{\tilde d}) > C_a\cc_0
	\qquad\text{and}\qquad 
	\omega_\mathrm{FD} 
	\coloneqq \frac{C_{d}^2 + C_{\dd}^2 + c_a\cc_0}{c_a \min(c_0, \ccc_0)}
	\le 1
\]
where $c_d$ and $c_{\tilde d}$ are the inf--sup constants of $\calD$ and $\calDD$, respectively. Then the scheme~\eqref{eq:fullyDecoupled:semiExpl} converges with order one. 
\end{theorem}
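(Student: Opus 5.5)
The plan is to split the error into two parts: the distance between the original system~\eqref{eq:thermoporoOperator} and the delay system~\eqref{eq:delay:thermoporoOperator}, and the error of scheme~\eqref{eq:fullyDecoupled:semiExpl} viewed as an implicit Euler discretization of the delay system. The first part is already $\mathcal{O}(\tau)$ by the preceding lemma. Its hypotheses $3\cc_0 \le 2c_0$ and $3\cc_0\le 2\ccc_0$ should follow from $\omega_\mathrm{FD}\le 1$. Indeed, $\omega_\mathrm{FD}\le1$ gives $c_a\cc_0 \le c_a\min(c_0,\ccc_0) - C_d^2 - C_{\dd}^2$. The inf--sup conditions give $C_d^2\ge c_d^2 > C_a\cc_0\ge c_a\cc_0$, and similarly for $C_{\dd}$. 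Hence $3c_a\cc_0 < c_a\min(c_0,\ccc_0)$, which is even stronger than needed. This is presumably one reason for the first hypothesis.

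For the second part, I eliminate $\bar u=\calA^{-1}(\f+\bbD^*\bar\pbf_\tau)$ with $\bar\pbf=[\bar p;\bar\theta]$. This turns~\eqref{eq:delay:thermoporoOperator} into an equation of the form~\eqref{eq:auxEquation}:
\[
\calE\dot{\bar\pbf}+\calK\bar\pbf+\calM\dot{\bar\pbf}_\tau=\gbf-\bbD\calA^{-1}\dot\f,
\]
with
\[
\calE=\begin{bmatrix}\calC&0\\0&\calCCC\end{bmatrix},\qquad \calK=\bbB,\qquad \calM=\bbD\calA^{-1}\bbD^*-\begin{bmatrix}0&\calCC\\\calCC&0\end{bmatrix}.
\]
A direct check shows that the implicit Euler discretization~\eqref{eq:auxEquation:Euler} of this equation coincides with~\eqref{eq:fullyDecoupled:semiExpl} after eliminating $u^{n+1}$. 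The displacement is then recovered from the first row.

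I then verify Assumption~\ref{ass:parabolicDelaySetting} with $\calX=\Q^2$ and $\cHX=\cHQsquare$. The operators $\calE$ and $\calK$ are symmetric and elliptic with $c_\calE=\min(c_0,\ccc_0)$ and $c_\calK\ge\min(c_b,c_{\bb})$ by Lemma~\ref{lem:propertiesBlockOperators}. The operator $\calM$ is symmetric. Its continuity constant is bounded by $C_\calM\le C_{\bbD}^2/c_a+\cc_0\le (C_d^2+C_{\dd}^2)/c_a+\cc_0$, so $C_\calM\le c_\calE$ is exactly $\omega_\mathrm{FD}\le1$.

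The main obstacle is ellipticity of $\calM$, since it is an indefinite perturbation of a positive operator. The approach mirrors the poroelasticity example. For $\qbf=[q_1;q_2]$,
\[
\langle\bbD\calA^{-1}\bbD^*\qbf,\qbf\rangle\ge C_a^{-1}\|\calD^*q_1+\calDD^*q_2\|_{\V^*}^2 .
\]
The difficulty is that a sum of two inf--sup operators need not be bounded below by $c_d^2\|q_1\|^2+c_{\dd}^2\|q_2\|^2$. For the concrete operators, $\calD^*q_1+\calDD^*q_2=-\nabla(\alpha q_1+\beta q_2)$, so only the combination $\alpha q_1+\beta q_2$ is controlled. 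I would therefore try to use the structure of $\bbD$, or interpret the hypothesis as the block inf--sup bound $\|\bbD^*\qbf\|_{\V^*}^2\ge\min(c_d^2,c_{\dd}^2)\|\qbf\|^2$. This would give $\langle\calM\qbf,\qbf\rangle\ge(\min(c_d^2,c_{\dd}^2)/C_a-\cc_0)\|\qbf\|^2_{\cHQsquare}>0$, using $|2\langle\calCC q_1,q_2\rangle|\le\cc_0\|\qbf\|^2$.

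Theorem~\ref{th:convergence:auxEquation} then yields first-order convergence of $p^n,\theta^n$ to $\bar p(t^n),\bar\theta(t^n)$, given a first-order accurate $p^1,\theta^1$ and regularity $\bar\pbf\in H^2$. Combining this with the lemma and the triangle inequality gives the result for $p$ and $\theta$. For $u$, I subtract the first rows and use the ellipticity of $\calA$: $\|u^{n+1}-u(t^{n+1})\|_\V\lesssim\|\pbf^n-\pbf(t^{n+1})\|$, and this quantity is $\mathcal{O}(\tau)$.
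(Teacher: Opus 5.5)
Your route is the paper's: eliminate $\bar u$, take the same $\calE$, $\calK$, $\calM$, bound $C_{\calM}\le (C_d^2+C_{\dd}^2)/c_a+\cc_0$ so that $C_{\calM}\le c_{\calE}$ follows from $\omega_\mathrm{FD}\le 1$, and invoke Theorem~\ref{th:convergence:auxEquation}. You also spell out two things the written proof leaves implicit. First, you combine the result with the preceding lemma, and you derive its hypotheses correctly via $C_d\ge c_d$ and $C_a\ge c_a$. Second, you recover $u$. One small correction: the identification with \eqref{eq:auxEquation:Euler} holds only up to replacing $\tau\bbD\calA^{-1}\dot f(t^{n+1})$ by $\bbD\calA^{-1}(f^{n+1}-f^n)$. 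This is an $\calO(\tau^2)$ term and simply enters the defect.

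The step you leave open, ellipticity of $\calM$, is a genuine gap, and your fallback does not close it. Postulating $\|\bbD^*\qbf\|_{\V^*}\ge\min(c_d,c_{\dd})\|\qbf\|_{\cHQsquare}$ is exactly what the paper's proof does: it asserts inf--sup stability of $\bbD$ \emph{because} $\calD$ and $\calDD$ differ only by a constant. Your own observation refutes this. Write $\calDD=\rho\calD$ with $\rho=\beta/\alpha$. The vector $\qbf=[\rho q;-q]$ then satisfies $\bbD^*\qbf=0$ and $\langle\calM\qbf,\qbf\rangle=2\cc_0\rho\|q\|^2_{\cHQ}\le\cc_0\|\qbf\|^2_{\cHQsquare}$. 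Hence the claimed constant $\min(c_d^2,c_{\dd}^2)/C_a-\cc_0$ is wrong as soon as $\min(c_d^2,c_{\dd}^2)>2C_a\cc_0$.

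What works is your first option, using the structure of $\bbD$. Since $\bbD^*\qbf=\calD^*(q_1+\rho q_2)$,
\[
\langle\calM\qbf,\qbf\rangle\ge m\,\|q_1+\rho q_2\|_{\cHQ}^2-2\cc_0\,(q_1,q_2)_{\cHQ},\qquad m\coloneqq \frac{c_d^2}{C_a}.
\]
This form has a $2\times2$ coefficient matrix with diagonal entries $m$, $m\rho^2$ and off-diagonal entry $m\rho-\cc_0$. Its trace is positive, and its determinant is $\cc_0(2m\rho-\cc_0)=\cc_0(2c_dc_{\dd}/C_a-\cc_0)>0$, because $c_dc_{\dd}\ge\min(c_d^2,c_{\dd}^2)>C_a\cc_0$. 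So $\calM$ is elliptic, with the smallest eigenvalue of that matrix as constant. The size of this constant does not matter: the proof of Theorem~\ref{th:convergence:auxEquation} only needs $\calM$ symmetric positive semidefinite, for the Cauchy--Schwarz step and to drop $\|e^n\|^2_{\calM}$.

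This argument uses $\rho>0$, i.e.\ $\alpha\beta>0$. For $\rho<0$ the kernel vector $[\,|\rho|q;q\,]$ gives $\langle\calM\qbf,\qbf\rangle=-2\cc_0|\rho|\,\|q\|^2_{\cHQ}<0$, however large $c_d$ and $c_{\dd}$ are. So it is the sign structure of the coupling, not the individual inf--sup constants, that makes this step go through.
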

\begin{proof}
With the solution vector~$\bar \pbf \coloneqq [\bar p; \bar \theta]$, the delay vector~$\bar \pbf_\tau \coloneqq [{\bar p}_\tau; {\bar \theta}_\tau]$, and right-hand side $\gbf \coloneqq [\g; \h]$, we can write the system \eqref{eq:delay:thermoporoOperator} as
\begin{subequations}
	\label{eq:delayThermoPoro}
\begin{align}
	\calA {\bar u} - \bbD^*{\bar \pbf}_\tau 
	&= \f, \label{eq:delayThermoPoro:a}\\
	\bbD \dot {\bar u} + \begin{bmatrix}\ \calC & \phantom{-\calCC\ }\ \\ \phantom{\calCC} & \phantom{-}\calCCC\ \ \end{bmatrix}\dot {\bar \pbf} - \begin{bmatrix} \phantom{-\calC} & \calCC\ \ \\ \calCC & \phantom{-\calCCC\ }\ \end{bmatrix} \dot {\bar \pbf}_\tau + \bbB {\bar \pbf} 
	&= \gbf \label{eq:delayThermoPoro:b}
\end{align}
\end{subequations}
with
\[
\bbB
\coloneqq \begin{bmatrix} \calB & \\ & \calBB \end{bmatrix}, \qquad
%
%
\bbD
\coloneqq \begin{bmatrix} \calD \\ \calDD \end{bmatrix}
\]
as before. 
Solving~\eqref{eq:delayThermoPoro:a} for $\bar u$, we obtain with~\eqref{eq:delayThermoPoro:b} 
\begin{align*}
	\begin{bmatrix}\ \calC & \phantom{-\calCC\ }\ \\ \phantom{\calCC} & \phantom{-}\calCCC\ \ \end{bmatrix}\dot {\bar \pbf} + \bbB {\bar \pbf} + \left( \bbD \calA^{-1}\bbD^* -\begin{bmatrix} \phantom{-\calC} & \calCC\ \ \\ \calCC & \phantom{-\calCCC\ }\ \end{bmatrix} \right)\dot {\bar \pbf}_\tau
	&= \gbf - \bbD \calA^{-1}\dot f.
\end{align*}
For the application of Theorem~\ref{th:convergence:auxEquation}, we define the operators
\begin{align*}
	\calE 
	\coloneqq\begin{bmatrix}\ \calC & \phantom{-\calCC\ }\ \\ \phantom{\calCC} & \phantom{-}\calCCC\ \ \end{bmatrix}, \qquad  
	\calK 
	\coloneqq \bbB,\qquad 
	\calM 
	\coloneqq \bbD \calA^{-1}\bbD^* -\begin{bmatrix} \phantom{-\calC} & \calCC\ \ \\ \calCC & \phantom{-\calCCC\ }\ \end{bmatrix}. 
\end{align*}
It is easy to verify that the operators $\calE$ and $\calK$ are continuous, symmetric, and elliptic with constants $c_\calE = \min(c_0, \ccc_0)$ and $c_\calK = c_{\bbB} \ge \min(c_{b}, c_{\bb})$, respectively. It remains to show that $\calM$ is symmetric, elliptic, and continuous. For the symmetry, we consider vectors $\pbf = [p_1; p_2], \qbf = [q_1; q_2] \in \Q^2$ and apply the symmetry of $\calA$, leading to 
\begin{align*}
	\big( \bbD \calA^{-1}\bbD^* \pbf, \qbf\big)_{\cHQsquare} 
	= \big( \calA^{-1}\bbD^* \pbf, \bbD^* \qbf\big)_{\cHQsquare}
	= \big( \pbf, \bbD \calA^{-1}\bbD^*  \qbf\big)_{\cHQsquare}.
\end{align*}
To prove the ellipticity, we apply the inf--sup stability of $\bbD$, which follows from the fact that~$\calD$ and~$\calDD$ are inf--sup stable and only differ by a constant. Hence, with $c_\bbD = \min(c_d, c_{\tilde d})$ we have 
\begin{align*}
	(\calM \qbf, \qbf)_{\cHQsquare}
	&\ge \langle\calA^{-1}\bbD^*\qbf, \bbD^*\qbf\rangle
	- \bigg\langle\begin{bmatrix} \phantom{-\calC} & \calCC\ \\ \calCC & \phantom{-\calCCC}\ \end{bmatrix}\qbf, \qbf\bigg\rangle\\
	%
	%
	&\ge \frac{c^2_\bbD}{C_a}\, \|\qbf\|^2_{\cHQsquare}
	- 2\, \cc_0\, \|q_1\|_{\cHQ} \|q_2\|_{\cHQ}
	\ge \left(\frac{c^2_\bbD}{C_a} - \cc_0\right)\, \|\qbf\|^2_{\cHQsquare},
\end{align*}
which is positive under the condition $c^2_\bbD = \min(c^2_d, c^2_{\tilde d}) > C_a\cc_0$. Finally, using Lemma~\ref{lem:propertiesBlockOperators} we compute 
\begin{align*}
	\|\calM \qbf \|_{(\cHQdual)^2} 
	%
	\le \left\|\bbD \calA^{-1}\bbD^*\qbf\right\|_{(\cHQdual)^2} 
	+ \left\|\begin{bmatrix} \phantom{-\calC} & \calCC\ \ \\ \calCC & \phantom{-\calCCC\ }\ \end{bmatrix}\qbf\ \right\|_{(\cHQdual)^2} 
	%
	%
	%
	\le \Big( \frac{C_{d}^2 + C_{\dd}^2}{c_a} + \cc_0 \Big)\, \|\qbf\|_{\cHQsquare}.
\end{align*}
Hence, following Theorem~\ref{th:convergence:auxEquation}, the fully decoupled semi-explicit scheme converges with order 1 as long as $C_\calM \le c_\calE$, which is satisfied if $\omega_\mathrm{FD} \le 1$.
\end{proof}
%
%
\section{Numerical Examples}
\label{sec:num}
In this final section, we provide numerical examples and computational comparisons of all presented methods. Moreover, we study the sharpness of the sufficient convergence conditions on a model problem. 
%
%
\subsection{Spatial discretization}
\label{sec:num:spaceDisc}
For the spatial discretization of the upcoming geothermal problem, we consider piecewise linear, respectively quadratic, finite elements for all three components. This turns the original system~\eqref{eq:thermoporoOperator} into the differential--algebraic equation system 
\begin{align*}
	\begin{bmatrix} 0 & \phantom{-}0 & \phantom{-}0 \\ D & \phantom{-}C & - \CC \\ \DD & -\CC & \phantom{-}\CCC	\end{bmatrix}
	\begin{bmatrix} \dot u \\ \dot p \\ \dot \theta \end{bmatrix}
	= 
	\begin{bmatrix} -A & \phantom{-}D^\top & \phantom{-}\DD^\top \\ & -B\phantom{^\top} & \\ & & -\BB\phantom{^\top} \end{bmatrix}
	\begin{bmatrix} u \\ p \\ \theta \end{bmatrix}
	+ 
	\begin{bmatrix} \f \\ \g \\ \h \end{bmatrix}.
\end{align*}
Note that we use the same notation for the semi-discrete vectors as for the corresponding original variables. According to the assumptions from Section~\ref{sec:model}, the matrices $A$, $B$, $\BB$, $C$, $\CC$, and $\CCC$ are symmetric positive definite. In this setting, the ellipticity (respectively continuity) constants 
can be identified with the smallest (respectively largest) eigenvalues of the respective matrices. 
Furthermore, we observe that also the matrix 
\[
	\begin{bmatrix} 
		\phantom{-}C & - \CC \\ 
		-\CC & \phantom{-}\CCC	
	\end{bmatrix}
\]
is symmetric positive definite if Assumption~\ref{ass:chat} is satisfied. Concerning the coupling matrices $D$ and $\DD$, we assume that the smallest singular values are positive. Note that these correspond to the inf--sup constants $c_D$ and $c_{\DD}$, respectively. The continuity constants $C_D$ and $C_{\DD}$ can be identified with the respective largest singular values.
\begin{remark}
The mentioned properties of the involved matrices imply that the resulting differential--algebraic equation has index~$1$; see \cite[Sect.~2.2]{BreCP96} for a precise definition and further details. 
\end{remark}
In all following experiments, we use the fully implicit midpoint rule with constant step size~$\tau$ and a triangulation with mesh size $h_\text{ref}$ to obtain a reference solution $(u^n_\text{ref}, p^n_\text{ref}, \theta^n_\text{ref})$. Here, the index $n$ corresponds to the time point $t^n = n\tau$. The overall error is then computed as the sum of the relative errors in the respective $A$-, $C$-, and $\CCC$-norms at the final time $t = T = N\tau$. More precisely, we define 
\[
	e_T
	= e_T(u_h^N,p_h^N,\theta_h^N)
	\coloneqq \frac{\|u^N_\text{ref} - u_h^N\|_A}{\|u^N_\text{ref}\|_A} 
	+ \frac{\|p^N_\text{ref} - p_h^N\|_C}{\|p^N_\text{ref}\|_C} 
	+ \frac{\|\theta^N_\text{ref} - \theta_h^N\|_{\CCC}}{\|\theta^N_\text{ref}\|_{\CCC}}.
\]
The implementation is realized with DOLFIN 2019.2.0 and the experiments are performed on a Lenovo ThinkPad X1 2-in-1 Gen 10 with an Intel Core Ultra 7 255U processor.
%
%
\subsection{Geothermal model problem}
\label{sec:num:geothermal}
As first numerical example, we consider the equations of thermo--poroelasticity on the unit square in two space dimensions with homogeneous Dirichlet boundary conditions. 
The used parameters are motivated from the experiments in~\cite{AntBB23}, slightly adjusted such that the weak coupling conditions are satisfied. More precisely, we set
\begin{center} 
	\begin{tabular}{c@{\quad} c@{\quad}|@{\quad} c@{\quad} c@{\quad}|@{\quad} c@{\quad} c@{\quad} c@{\quad} c}                          
		$\lambda$ & $\mu$  & $\frac{\kappa}{\nu}$ & $\widetilde \kappa$ & $c_0$ & $\cc_0$&$\ccc_0$ \\  
		\toprule                                           
		$1.2 \cdot 10^{10}$ & $6.0 \cdot 10^9$    &  $6.33 \cdot 10^2$   & $10^2$ &  $7.8 \cdot 10^3$   &$3.03 \cdot 10^{-11}$& $0.92\cdot 10^3 $\\ 
	\end{tabular}
\end{center}
together with the coupling parameters 
\[
	\alpha 
	= 0.97, \qquad
	\beta 
	= 3.96 \cdot 10^{6}.
\]
In order to investigate the convergence order in time, we solve the problem with the same spatial method as for the reference solution, i.e., $P1$--$P1$--$P1$ finite elements with mesh size $h_\text{ref} = h = 0.125$. 
\subsubsection{Half-decoupled schemes}
We compare the half-decoupled schemes presented in Section~\ref{sec:halfDecoupled}. 
For scheme~\eqref{eq:halfDecoupled:norway} from~\cite{AntBB23}, we set the stabilization parameters to $L_p = L_{\theta} = 0.025$ in a first run and $L_p = L_{\theta} = 0.001$ in a second. For both setups, we use $K = 5, 10$, and $20$ inner iterations. Figure~\ref{fig:halfDecoupled} shows that the error barely decreases for $K=5$ for both sets of stabilization parameters and even increases for $\tau < 2^{-4}$. For $K=10$ (and similarly for $K=20$), one observes a convergent behavior up to $\tau \approx 2^{-6}$ and an increase of the error afterwards. Such a divergent behavior for small step sizes is in line with the results of~\cite{AltMU24b}, which show the existence of an error term $\frac{\operatorname{tol}}{\tau}$ in the case of a similar iteration scheme for poroelasticity.
	
For the newly introduced semi-explicit scheme~\eqref{eq:halfDecoupled:semiExpl}, we first need to justify the convergence by computing 
\[
	\omega_\mathrm{HD} 
	= \frac{C_d^2 + C^2_{\dd}}{c_a\, \min\{ c_0-\cc_0, \ccc_0-\cc_0 \}}
	\approx 0.947   
	\le 1.
\]
Hence, Theorem~\ref{th:halfDecoupled:semiexplicit} guarantees first-order convergence, which is also observed in Figure~\ref{fig:halfDecoupled}.
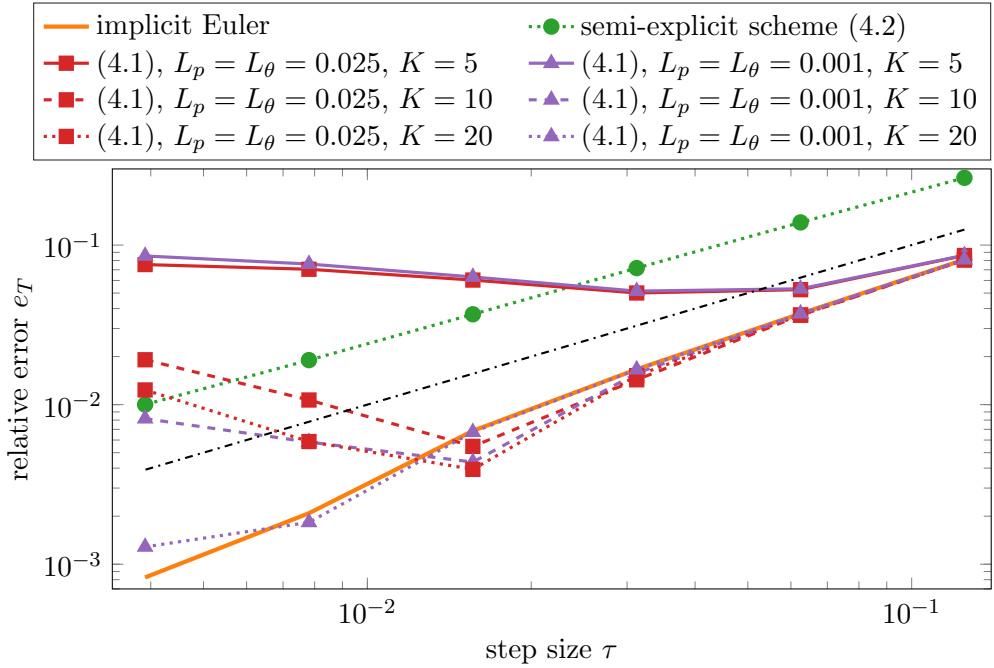
\begin{figure}
	\input{pics/conv_halfDecoupled_h0125}
	\caption{Convergence history for the half-decoupled schemes for the geothermal model problem from Section~\ref{sec:num:geothermal} with fixed mesh size $h=0.125$. The dash-dotted line shows order $1$.}
	\label{fig:halfDecoupled} 
\end{figure}
\subsubsection{Fully decoupled schemes}
We now turn to the fully decoupled schemes presented in Section~\ref{sec:fullyDecoupled} for the same problem. The resulting convergence histories are presented in Figure~\ref{fig:fullyDecoupled}.

For scheme~\eqref{eq:fullyDecoupled:russia}, we apply different stabilization parameters. For~$\sigma = 0.5$, the scheme does not converge. For $\sigma = 0.77$, one observes a convergent behavior up to $\tau \approx 2^{-5}$ and an increase of the error afterwards, cf.~Figure~\ref{fig:fullyDecoupled}. Recall that the scheme~\eqref{eq:fullyDecoupled:russia} coincides to the newly presented scheme~\eqref{eq:fullyDecoupled:semiExpl} for $\sigma = 1$. Hence, we have first-order convergence in this case. For large values of $\sigma$, the rate of convergence slows down. For $\sigma = 200$, for example, the error reduces but only reaches a relative error of $2.5$ for the finest time step size. Hence, we did not include this in the convergence plot.  

For scheme~\eqref{eq:fullyDecoupled:norway} from~\cite{AntBB23}, we consider the stabilization parameters $L_p = L_\theta = 0.025$. Similar to the half-decoupled scheme, one observes a convergent behavior up to $\tau \approx 2^{-5}$ for $K = 5$ and up to $\tau \approx 2^{-6}$ for $K=10$. For $K=20$, scheme~\eqref{eq:fullyDecoupled:norway} converges for all considered time step sizes. 
Scheme~\eqref{eq:fullyDecoupled:norway2} behaves similarly as one can see for $K=20$ and $L_p = L_\theta = 0.025$.

Finally, for the convergence of the newly introduced scheme~\eqref{eq:fullyDecoupled:semiExpl}, cf.~Theorem~\ref{th:fullyDecoupled:semiexplicit}, we have that  
\[
	\omega_\mathrm{FD} 
	= \frac{C_{d}^2 + C_{\dd}^2 + c_a\cc_0}{c_a \min(c_0, \ccc_0)}
	\approx 0.947 
	\le 1.
\]
Indeed, we observe first-order convergence for this scheme. Recall that the scheme gets along without any inner iteration. The resulting errors, however, are larger compared to the other schemes if the same time step size is used. 
\begin{figure}
	\input{pics/conv_fullyDecoupled_h0125}
	\caption{Convergence history for the fully decoupled schemes for the geothermal model problem from Section~\ref{sec:num:geothermal} with fixed mesh size $h=0.125$. The dash-dotted line shows order $1$. }
	\label{fig:fullyDecoupled} 
\end{figure}
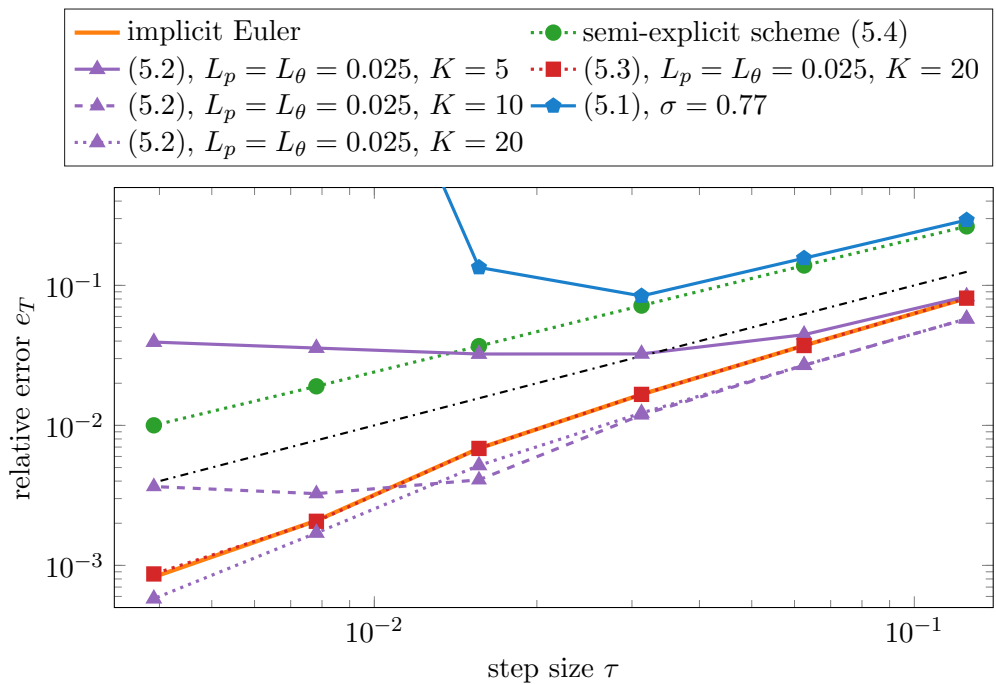
%
%
\subsection{Convergence in space and time}
In this experiment, we include the spatial errors. For this, we compute the reference solution again using the implicit midpoint rule but on a finer spatial grid. More precisely, we have $h_\text{ref} = 0.002$ for the computations using $P1$--$P1$--$P1$ finite elements and $h_\text{ref} = 0.0078$ within the $P2$--$P1$--$P1$ setting. Note that the latter approach is motivated by~\cite{ErnM09}.  

The physical parameters are chosen as in the previous example. In Figure~\ref{fig:spatialError}, we compare the implicit Euler scheme with the semi-explicit schemes~\eqref{eq:halfDecoupled:semiExpl} and \eqref{eq:fullyDecoupled:semiExpl} for different spatial mesh sizes. As expected, one can observe first-order convergence (in time) for all methods up to a certain plateau, which is defined by the spatial accuracy. 
%
\begin{figure}	
	\definecolor{mycolor0}{rgb}{0.12156862745098,0.466666666666667,0.705882352941177} 
	\definecolor{mycolor1}{rgb}{0.00000,0.44700,0.74100}
	\definecolor{mycolor2}{rgb}{0.85000,0.32500,0.09800}
	\definecolor{mycolor3}{rgb}{0.49400,0.18400,0.55600}
	\definecolor{mycolor4}{rgb}{0.92900,0.69400,0.12500}
	\definecolor{mycolor5}{rgb}{0.46600,0.67400,0.18800}
	\definecolor{mycolor6}{rgb}{0.30100,0.74500,0.93300}
	\definecolor{mycolor7}{rgb}{0.63500,0.07800,0.18400}
	\input{pics/conv_Exp2_a_R.tex}
	\input{pics/conv_Exp2_b_R.tex}
	\caption{Relative errors at the final time point $t=T=1$ for different
		spatial mesh sizes, namely $h=0.0625$ (dash-dotted), $h=0.0312$ (solid), $h=0.0156$ (dashed), and~$0.0078$ (dotted). For the spatial discretization, we use $P1$--$P1$--$P1$ (left) and $P2$--$P1$--$P1$ finite elements (right).
	}
	\label{fig:spatialError} 
\end{figure}
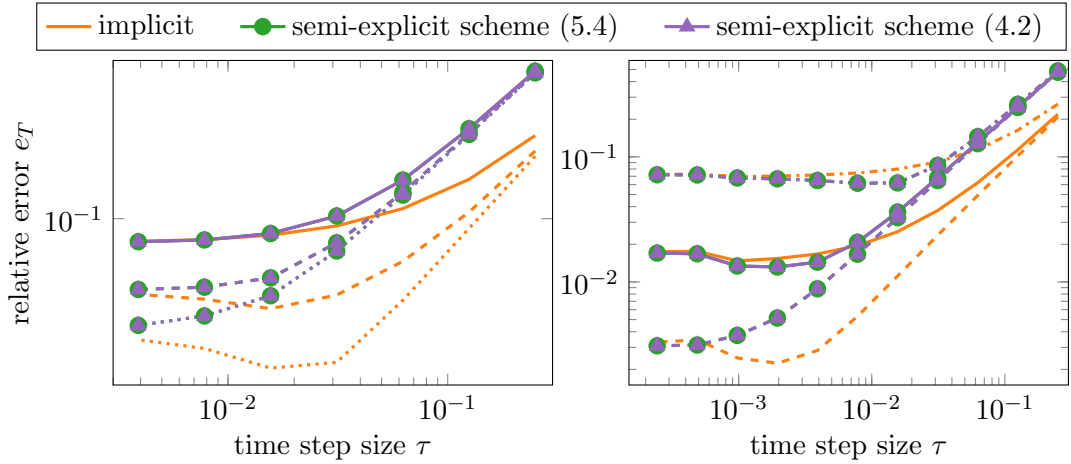
%
%
\subsection{Sharpness of the sufficient conditions}
In this final experiment, we study the sharpness of the weak coupling condition for the half decoupled scheme~\eqref{eq:halfDecoupled:semiExpl} as well as for the fully decoupled scheme~\eqref{eq:fullyDecoupled:semiExpl}. For this, we consider a toy problem with stiffness and diffusion matrices
\[
	A = \frac{1}{2-\sqrt2}\,\begin{bmatrix}2&-1&0\\-1&2&-1\\ 0&-1&2\end{bmatrix}, \qquad 
	B = 2, \qquad \BB = 1.
\]
For the inverse of the Biot modulus, and the thermal delatation coefficient, we set 
\[
	C 
	= 2, \qquad\quad
	\CC
	= 0.5.
\]
Finally, we introduce the coupling matrices 
\begin{align*}
	D 
	= \alpha\, \begin{bmatrix}2&1&2\end{bmatrix}, \qquad
	\DD 
	= \beta\, \begin{bmatrix}2&1&2\end{bmatrix}
\end{align*}
which scale with the parameters $\alpha, \beta$. In the following, we consider the case $\alpha = \beta \in (0,0.64)$. The thermal capacity~$\ccc_0$ is selected within the interval $(\cc_0, c_0+3.5)$. 

We set the time step size to $\tau = 0.1\cdot 2^{-8}$. As reference solution we use the solution of the implicit Euler scheme with time step size $\tau_\text{ref} = 0.1\cdot 2^{-9}$. 
In Figure~\ref{fig:sharpness:halfDecoupled:semiExpl1}, we visualize the convergence properties of the half decoupled scheme~\eqref{eq:halfDecoupled:semiExpl} in comparison with the corresponding weak coupling condition. The area where the weak coupling condition is fulfilled, is colored in green. Here, convergence is guaranteed by the presented convergence analysis. The yellow region indicates that the sufficient condition is not satisfied but convergence is still obtained. By this we mean that the relative error is smaller than $10^{-2}$. Finally, the method is not applicable (and actually diverges) in the red regions. 

The corresponding study for the fully decoupled scheme~\eqref{eq:fullyDecoupled:semiExpl} with the same color coding is shown in Figure~\ref{fig:sharpness:fullyDecoupled:semiExpl2}. 
\begin{figure}	
	\input{pics/pictureExperimentConvergency3col}
	\caption{Illustration of the sharpness of the weak coupling condition guaranteeing convergence for the half decoupled scheme~\eqref{eq:halfDecoupled:semiExpl}. }
	\label{fig:sharpness:halfDecoupled:semiExpl1} 
\end{figure}
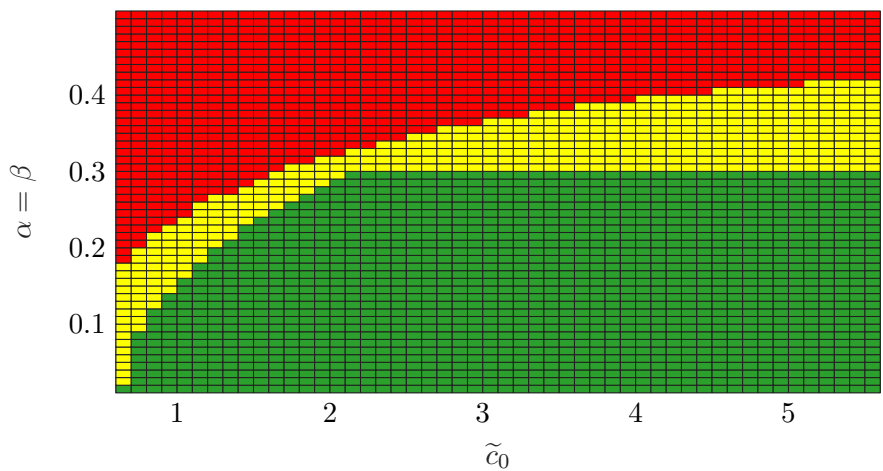
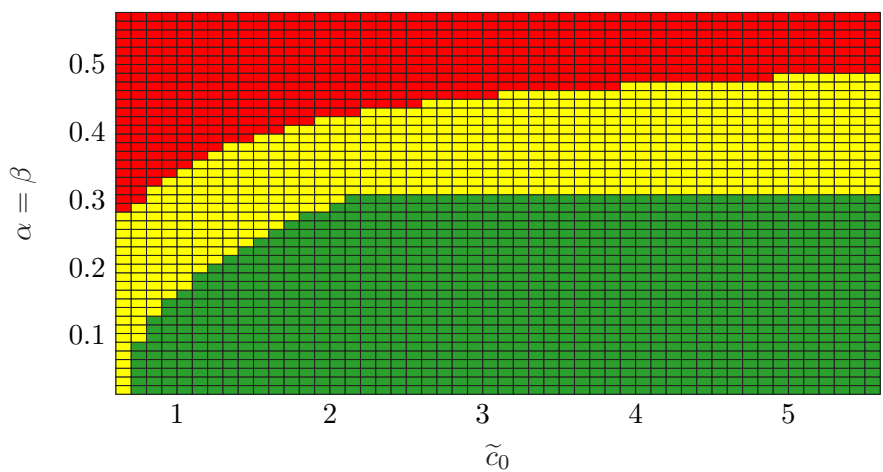
\begin{figure}	
	\input{pics/pictureExperimentConvergencyfullydec3col}
	\caption{Illustration of the sharpness of the weak coupling condition guaranteeing convergence for the fully decoupled scheme~\eqref{eq:fullyDecoupled:semiExpl}. }
	\label{fig:sharpness:fullyDecoupled:semiExpl2}
\end{figure}
%
%
\section{Conclusions}
In this work, we have proposed fully and partially decoupled time discretization schemes for linear thermo--poroelasticity. They allow to solve the three involved equations for the displacement, the pressure, and the temperature successively in each time step, avoiding a more involved (bigger) system matrix. Under a suitable coupling condition, which can be verified by the material parameters alone, we have shown first-order convergence of the introduced schemes. To achieve this, we have first proven an abstract convergence result for a delay equation of parabolic type and showed that the scheme can be reformulated as an implicit Euler scheme for such a delay equation. Finally, we presented a set of numerical experiments to illustrate the theoretical findings and compared our schemes to existing decoupling approaches. 
%
%
\section*{Acknowledgments} 
R.~Altmann acknowledges the support of the Deutsche Forschungsgemeinschaft (DFG, German Research Foundation) through the project 467107679.  
%
%
\bibliographystyle{alpha}
\bibliography{references}
\end{document}

%% file: def.tex
\definecolor{myBlue}{RGB}{30,144,255} 
\definecolor{myGreen}{RGB}{69,169,0} 
\definecolor{myRed}{RGB}{165,12,42} 
\definecolor{myOrange}{RGB}{225,92,22} 
\definecolor{color0}{rgb}{0.12156862745098,0.466666666666667,0.705882352941177}
\definecolor{color1}{rgb}{1,0.498039215686275,0.0549019607843137}
\definecolor{color2}{rgb}{0.172549019607843,0.627450980392157,0.172549019607843}
\definecolor{color3}{rgb}{0.83921568627451,0.152941176470588,0.156862745098039}
\definecolor{color4}{rgb}{0.580392156862745,0.403921568627451,0.741176470588235}
\definecolor{color5}{rgb}{0.1,0.5,0.8}

\newcommand{\delete}[1]{ }

\def\N{\mathbb{N}}

\def\R{\mathbb{R}}

\newcommand\dx{\,\mathrm{d}x}

\newcommand{\ddt}{\ensuremath{\frac{\mathrm{d}}{\mathrm{d}t}} }

\DeclareMathOperator{\ddiv}{div}

\DeclareMathOperator{\trace}{trace}

\newcommand{\calA}{\ensuremath{\mathcal{A}} }

\newcommand{\calB}{\ensuremath{\mathcal{B}} }
\newcommand{\calC}{\ensuremath{\mathcal{C}} }
\newcommand{\calD}{\ensuremath{\mathcal{D}} }
\newcommand{\calE}{\ensuremath{\mathcal{E}} }

\newcommand{\calK}{\ensuremath{\mathcal{K}} }

\newcommand{\calM}{\ensuremath{\mathcal{M}} }

\newcommand{\calO}{\ensuremath{\mathcal{O}} }
\newcommand{\Q}{\ensuremath{\mathcal{Q}} }
\newcommand{\calQ}{\ensuremath{\mathcal{Q}} }

\newcommand{\V}{\ensuremath{\mathcal{V}}}
\newcommand{\calV}{\ensuremath{\mathcal{V}}}
\newcommand{\calX}{\ensuremath{\mathcal{X}} }

\newcommand{\cHV}{\ensuremath{{\mathcal{H}_{\V}}} }
\newcommand{\cHQ}{\ensuremath{{\mathcal{H}_{\Q}}} }
\newcommand{\cHX}{\ensuremath{{\mathcal{H}_{\calX}}} }
\newcommand{\cHQdual}{\ensuremath{{\mathcal{H}^*_{\scalebox{.5}{$\Q$}}}}}
\newcommand{\cHQsquare}{\ensuremath{{\mathcal{H}^2_{\Q}}} }

\newcommand{\f}{\ensuremath{f}}
\newcommand{\g}{\ensuremath{g}}
\newcommand{\h}{\ensuremath{h}}

\newcommand{\pbf}{\mathbf{p}}
\newcommand{\qbf}{\mathbf{q}}
\newcommand{\vbf}{\mathbf{v}}
\newcommand{\gbf}{\mathbf{g}}
\newcommand{\bb}{\ensuremath{\widetilde b}}
\newcommand{\cc}{\ensuremath{\widehat c}}
\newcommand{\ccc}{\ensuremath{\widetilde c}}
\newcommand{\dd}{\ensuremath{\widetilde d}}
\newcommand{\BB}{\ensuremath{\widetilde B}}
\newcommand{\CC}{\ensuremath{\widehat C}}
\newcommand{\CCC}{\ensuremath{\widetilde C}}
\newcommand{\DD}{\ensuremath{\widetilde D}}
\newcommand{\calBB}{\ensuremath{\widetilde \calB}}
\newcommand{\calCC}{\ensuremath{\widehat \calC}}
\newcommand{\calCCC}{\ensuremath{\widetilde \calC}}
\newcommand{\calDD}{\ensuremath{\widetilde \calD}}
\newcommand{\bbB}{\ensuremath{\mathbb{B}}}
\newcommand{\bbC}{\ensuremath{ \mathbb{C}}}
\newcommand{\bbD}{\ensuremath{\mathbb{D}}}
\newcommand{\bbM}{\ensuremath{\mathbb{M}}}

\DeclareFontFamily{U}{matha}{\hyphenchar\font45}
\DeclareFontShape{U}{matha}{m}{n}{
	<-6> matha5 <6-7> matha6 <7-8> matha7
	<8-9> matha8 <9-10> matha9
	<10-12> matha10 <12-> matha12
}{}
\DeclareSymbolFont{matha}{U}{matha}{m}{n}

\DeclareFontFamily{U}{mathx}{\hyphenchar\font45}
\DeclareFontShape{U}{mathx}{m}{n}{
	<-6> mathx5 <6-7> mathx6 <7-8> mathx7
	<8-9> mathx8 <9-10> mathx9
	<10-12> mathx10 <12-> mathx12
}{}
\DeclareSymbolFont{mathx}{U}{mathx}{m}{n}

\DeclareMathDelimiter{\vvvert} {0}{matha}{"7E}{mathx}{"17}%

%% file: pics/conv_halfDecoupled_h0125.tex
%
%
\begin{tikzpicture}

\begin{axis}[%
width=4.6in,
height=2.2in,
scale only axis,
unbounded coords=jump,
xmode=log,
xmin=0.0034,
xmax=0.14,
xminorticks=true,
xlabel={step size $\tau$},
ymode=log,
ymin=7e-4,
ymax=0.3,
yminorticks=true,
ylabel={relative error $e_T$},
axis background/.style={fill=white},
legend columns=2, 
legend style={at={(1.0,1.02)}, anchor=south east, legend cell align=left, align=left, draw=white!15!black}
]
\addplot[ultra thick, color=color1]
  table[row sep=crcr]{%
0.125 0.0810623479734721\\
0.0625 0.0371848023960902\\
0.03125 0.0166515080865012\\
0.015625 0.00685134271307111\\
0.0078125 0.00208756704450796\\
0.00390625 0.000826112568333951\\
};
\addlegendentry{implicit Euler}

\addplot[very thick, color=color2, dotted, mark=*, mark options={solid, color2}, mark size = 2.5]
table[row sep=crcr]{%
	0.125 0.263938639357525\\
	0.0625 0.138659294870395\\
	0.03125 0.0717272181706092\\
	0.015625 0.0368342816579371\\
	0.0078125 0.0190114232149586\\
	0.00390625 0.0100102692055827\\
};
\addlegendentry{semi-explicit scheme~\eqref{eq:halfDecoupled:semiExpl}}

\addplot[very thick, mark=square*, color=color3, mark size = 2.3]
table[row sep=crcr]{%
0.125 0.0859967431969647\\
0.0625 0.0525402895902035\\
0.03125 0.0500818003762832\\
0.015625 0.0603558264395541\\
0.0078125 0.0705358491131495\\
0.00390625 0.0754863209431561\\
};
\addlegendentry{\eqref{eq:halfDecoupled:norway}, $L_p=L_\theta=0.025$, $K=5$} 

\addplot[very thick, color=color4, mark=triangle*, mark size = 2.5]
  table[row sep=crcr]{%
	0.125 0.0863204753787782\\
	0.0625 0.0532185234633776\\
	0.03125 0.0514568888986619\\
	0.015625 0.0631164762319283\\
	0.0078125 0.0760637743501307\\
	0.00390625 0.0854614494243923\\
};
\addlegendentry{\eqref{eq:halfDecoupled:norway}, $L_p=L_\theta=0.001$, $K=5$}

%
%

\addplot[very thick, color=color3, mark=square*, dashed, mark options={solid}, mark size = 2.3]
  table[row sep=crcr]{%
0.125 0.0807124751283114\\
0.0625 0.0363185937668704\\
0.03125 0.0143160658246744\\
0.015625 0.00547593043436552\\
0.0078125 0.0106911247108005\\
0.00390625 0.0191114540769381\\
};
\addlegendentry{\eqref{eq:halfDecoupled:norway}, $L_p=L_\theta= 0.025$, $K=10$}

\addplot[very thick, color=color4, dashed, mark=triangle*, mark options={solid}, mark size = 2.5]
table[row sep=crcr]{%
	0.125 0.081035375990723\\
	0.0625 0.0369999055532716\\
	0.03125 0.0157195647925136\\
	0.015625 0.00436670276180446\\
	0.0078125 0.00582515714650982\\
	0.00390625 0.00816976841877012\\
};
\addlegendentry{\eqref{eq:halfDecoupled:norway}, $L_p=L_\theta=0.001$, $K=10$}

\addplot[very thick, color=color3, mark=square*, mark options={solid}, dotted, mark size = 2.3]
table[row sep=crcr]{%
0.125 0.0807259882930852\\
0.0625 0.0364751030899783\\
0.03125 0.0151909628127497\\
0.015625 0.00393104445611374\\
0.0078125 0.00587129807715794\\
0.00390625 0.0123609522408593\\
};
\addlegendentry{\eqref{eq:halfDecoupled:norway}, $L_p=L_\theta=0.025$, $K=20$\quad} 

\addplot[very thick, color=color4, dotted, mark=triangle*, mark options={solid}, mark size = 2.5]
table[row sep=crcr]{%
	0.125 0.0810488943998956\\
	0.0625 0.0371564019253317\\
	0.03125 0.0165925134586077\\
	0.015625 0.00672707475913439\\
	0.0078125 0.00182992985692426\\
	0.00390625 0.00128373197742472\\
};
\addlegendentry{\eqref{eq:halfDecoupled:norway}, $L_p=L_\theta=0.001$, $K=20$}

\addplot[thick, color=black, dashdotted, forget plot]
  table[row sep=crcr]{%
0.125 0.125\\
0.0625 0.0625\\
0.03125 0.03125\\
0.015625 0.015625\\
0.0078125 0.0078125\\
0.00390625 0.00390625\\
};

\end{axis}

\end{tikzpicture}%

%% file: pics/conv_fullyDecoupled_h0125.tex
%
%
\begin{tikzpicture}
	
	\begin{axis}[%
		width=4.6in,
		height=2.2in,
		scale only axis,
		unbounded coords=jump,
		xmode=log,
		xmin=0.0033,
		xmax=0.14,
		xminorticks=true,
		xlabel={step size $\tau$},
		ymode=log,
		ymin=5e-4,
		ymax=0.5,
		yminorticks=true,
		ylabel={relative error $e_T$},
		axis background/.style={fill=white},
		legend columns=2, 
		legend style={at={(1.0,1.05)}, anchor=south east, legend cell align=left, align=left, draw=white!15!black}
		]
		\addplot[ultra thick, color=color1]
		table[row sep=crcr]{%
			0.125 0.0810623479734721\\
			0.0625 0.0371848023960902\\
			0.03125 0.0166515080865012\\
			0.015625 0.00685134271307111\\
			0.0078125 0.00208756704450796\\
			0.00390625 0.000826112568333951\\
		};
		\addlegendentry{implicit Euler}
		
		\addplot[very thick, color=color2, dotted, mark=*, mark options={solid, color2}, mark size = 2.5]
		table[row sep=crcr]{%
			0.125 0.263938639357526\\
			0.0625 0.138659294870399\\
			0.03125 0.0717272181706004\\
			0.015625 0.036834281657943\\
			0.0078125 0.0190114232149427\\
			0.00390625 0.0100102692055386\\
		};
		\addlegendentry{semi-explicit scheme~\eqref{eq:fullyDecoupled:semiExpl}}

		\addplot[very thick, color=color4, mark=triangle*, mark options={solid}, mark size = 2.5]
		table[row sep=crcr]{%
				0.125 0.0834559626144228\\
				0.0625 0.0444539602291034\\
				0.03125 0.0324100980723395\\
				0.015625 0.032344380271942\\
				0.0078125 0.0356742263967174\\
				0.00390625 0.0393623688709015\\
			};
		
		\addlegendentry{\eqref{eq:fullyDecoupled:norway}, $L_p=L_\theta=0.025$, $K=5$}
		

		\addplot[very thick, color=color3, mark=square*, mark options={solid}, dotted, mark size = 2.3]
		table[row sep=crcr]{%
			0.125 0.0810623478816527\\
			0.0625 0.0371847866713893\\
			0.03125 0.0166509179891272\\
			0.015625 0.00684552099608857\\
			0.0078125 0.00206545492787944\\
			0.00390625 0.000868665410016283\\
		};
		\addlegendentry{\eqref{eq:fullyDecoupled:norway2}, $L_p=L_\theta=0.025$, $K=20$}

		\addplot[very thick, color=color4, dashed, mark=triangle*, mark options={solid}, mark size = 2.3]
		table[row sep=crcr]{%
			0.125 0.0577297297813001\\
			0.0625 0.0268524075346228\\
			0.03125 0.0118673022558561\\
			0.015625 0.00408460077100495\\
			0.0078125 0.00325308472596812\\
			0.00390625 0.00365708377199668\\
		};
		\addlegendentry{\eqref{eq:fullyDecoupled:norway}, $L_p=L_\theta=0.025$, $K=10$}
		
		\addplot[very thick, color=color5, mark=pentagon*, mark size = 2.5]
		table[row sep=crcr]{%
			0.125 0.292711138177936\\
			0.0625 0.156373327950703\\
			0.03125 0.0840361943456055\\
			0.015625 0.134376813705807\\
			0.0078125 59.2530302638505\\
			0.00390625 93576990.7859928\\
		};
		\addlegendentry{\eqref{eq:fullyDecoupled:russia}, $\sigma =0.77$}

		\addplot[very thick, color=color4, dotted, mark=triangle*, mark options={solid}, mark size = 2.5]
		table[row sep=crcr]{%
			0.125 0.0577358492848598\\
			0.0625 0.0269228726492995\\
			0.03125 0.0122587004110119\\
			0.015625 0.00517728242075251\\
			0.0078125 0.00170484693682207\\
			0.00390625 0.000578377976332179\\
		};
		\addlegendentry{\eqref{eq:fullyDecoupled:norway}, $L_p=L_\theta=0.025$, $K=20$}



		\addplot[thick, color=black, dashdotted, forget plot]
		table[row sep=crcr]{%
			0.125 0.125\\
			0.0625 0.0625\\
			0.03125 0.03125\\
			0.015625 0.015625\\
			0.0078125 0.0078125\\
			0.00390625 0.00390625\\
		};
		
	\end{axis}
	
\end{tikzpicture}%

%% file: pics/conv_Exp2_a_R.tex
%
%
%
\begin{tikzpicture}
	
	\begin{axis}[%
		width=2.3in,
		height=1.7in,
		scale only axis,
		xmode=log,
		xmin=0.003,
		xmax=0.3,
		xminorticks=true,
		xlabel={time step size $\tau$},
		ymode=log,
		ymin=0.017,
		ymax=0.54,
		yminorticks=true,
		ylabel={relative error $e_T$},
		axis background/.style={fill=white},
		legend columns = 5, 
		legend style={at={(2.17,1.03)}, anchor=south east, legend cell align=left, align=left}
		]
		
		\addplot[very thick, color=color1]
		table[row sep=crcr]{%
			0.25 0.24244323529415\\
			0.125 0.152130623708931\\
			0.0625 0.111353726411227\\
			0.03125 0.0926279595336345\\
			0.015625 0.0838968049938586\\
			0.0078125 0.0799987080970739\\
			0.00390625 0.0785985146352103\\
			};
		\addlegendentry{implicit \quad} 

		\addplot[very thick, color=color1, dashed, forget plot]
		table[row sep=crcr]{%
			0.25 0.20587123809399\\
			0.125 0.107292369555625\\
			0.0625 0.0635476195348524\\
			0.03125 0.0443457840820217\\
			0.015625 0.0383611333766378\\
			0.0078125 0.0424343817050633\\
			0.00390625 0.0446676440850379\\
			};

		\addplot[very thick, color=color1, dotted, forget plot]
		table[row sep=crcr]{%
			0.25 0.194725290744904\\
			0.125 0.090125323629555\\
			0.0625 0.0419163696150752\\
			0.03125 0.021652351602816\\
			0.015625 0.0203513504506943\\
			0.0078125 0.02510020448623\\
			0.00390625 0.0275423266220289\\
			};

		\addplot[very thick, color=color2, mark=*, mark options={solid, color2}, mark size = 2.7]
		table[row sep=crcr]{%
			0.25 0.482011941474498\\
			0.125 0.261052752938183\\
			0.0625 0.150972958474527\\
			0.03125 0.102957430858102\\
			0.015625 0.0854271846209207\\
			0.0078125 0.0797788062579869\\
			0.00390625 0.0782646299635763\\
			}; 
		\addlegendentry{semi-explicit scheme~\eqref{eq:fullyDecoupled:semiExpl} \quad} 

		\addplot[very thick, color=color2, mark=*, mark options={solid, color2}, mark size = 2.7, dashed, forget plot]
		table[row sep=crcr]{%
			0.25 0.474949498969165\\
			0.125 0.24856938157751\\
			0.0625 0.132206486496296\\
			0.03125 0.0773797370128077\\
			0.015625 0.0532535113377423\\
			0.0078125 0.0482248737159523\\
			0.00390625 0.0470424775820515\\
			};
		
		\addplot[very thick, color=color2, mark=*, mark options={solid, color2}, mark size = 2.7, dotted, forget plot]
		table[row sep=crcr]{%
			0.25 0.473201459369606\\
			0.125 0.245563645956832\\
			0.0625 0.12872823407453\\
			0.03125 0.0709061691356375\\
			0.015625 0.0440166085773051\\
			0.0078125 0.0354888475595594\\
			0.00390625 0.0320922119543256\\
			}; 

		\addplot[very thick, color=color4, mark=triangle*, mark options={solid}, mark size = 2.5]
		table[row sep=crcr]{%
			0.25 0.482011941474496\\
			0.125 0.261052752938184\\
			0.0625 0.150972958474491\\
			0.03125 0.102957430858087\\
			0.015625 0.0854271846209318\\
			0.0078125 0.0797788062580502\\
			0.00390625 0.0782646299635726\\
		}; 
		\addlegendentry{semi-explicit scheme~\eqref{eq:halfDecoupled:semiExpl} } 
		
		\addplot[very thick, color=color4, mark=triangle*, mark options={solid}, mark size = 2.5, dashed, forget plot]
		table[row sep=crcr]{%
			0.25 0.474949498969184\\
			0.125 0.248569381577523\\
			0.0625 0.132206486496288\\
			0.03125 0.07737973701292\\
			0.015625 0.0532535113378238\\
			0.0078125 0.0482248737160453\\
			0.00390625 0.0470424775820059\\
			};
		
		\addplot[very thick, color=color4, mark=triangle*, mark options={solid}, mark size = 2.5, dotted, forget plot]
		table[row sep=crcr]{%
			0.25 0.473201459369596\\
			0.125 0.245563645956842\\
			0.0625 0.128728234074534\\
			0.03125 0.070906169136722\\
			0.015625 0.0440166085771921\\
			0.0078125 0.0354888475599813\\
			0.00390625 0.0320922119551743\\
			};

	\end{axis}
	

%% file: pics/conv_Exp2_b_R.tex
%
%
%
	
	\begin{axis}[%
		width=2.3in,
		height=1.7in,
		at={(2.7in,0.0in)},
		scale only axis,
		xmode=log,
		xmin=0.00015,
		xmax=0.3,
		xminorticks=true,
		xlabel={time step size $\tau$},
		ymode=log,
		ymin=0.0015,
		ymax=0.59,
		yminorticks=true,
		axis background/.style={fill=white},
		legend columns = 5, 
		legend style={at={(2.0,1.03)}, anchor=south east, legend cell align=left, align=left}
		]
		
		\addplot[very thick, color=color1, dashdotted]
		table[row sep=crcr]{%
			0.25 0.264351021100559\\
			0.125 0.163355868432891\\
			0.0625 0.114445280598765\\
			0.03125 0.0910772582001014\\
			0.015625 0.0798218961166923\\
			0.0078125 0.0743311412328873\\
			0.00390625 0.0716248977935887\\
			0.001953125 0.0702822844833653\\
			0.0009765625 0.0696137045623316\\
			0.00048828125 0.0726070803632282\\
			0.000244140625 0.0724394564367411\\
			};

		\addplot[very thick, color=color1]
		table[row sep=crcr]{%
			0.25 0.219391512193428\\
			0.125 0.114023385652173\\
			0.0625 0.0622297352973607\\
			0.03125 0.037234260289203\\
			0.015625 0.0252433106744452\\
			0.0078125 0.0195016703527733\\
			0.00390625 0.0167392176173132\\
			0.001953125 0.0153968152823162\\
			0.0009765625 0.0147377089935785\\
			0.00048828125 0.0176018721439784\\
			0.000244140625 0.0174311103600445\\
			};
		
		\addplot[very thick, color=color1, dashed]
		table[row sep=crcr]{%
			0.25 0.208207794210437\\
			0.125 0.10178947570732\\
			0.0625 0.0492117518325607\\
			0.03125 0.0235912658491455\\
			0.015625 0.0111617272338988\\
			0.0078125 0.00530690083989715\\
			0.00390625 0.00283016051066354\\
			0.001953125 0.00223915372142952\\
			0.0009765625 0.00246827364194632\\
			0.00048828125 0.00345121641756022\\
			0.000244140625 0.00329966937666113\\
			};

		\addplot[very thick, color=color2, mark=*, mark options={solid, color2}, mark size = 2.7, dashdotted]
		table[row sep=crcr]{%
			0.25 0.486808056638974\\
			0.125 0.263226713500305\\
			0.0625 0.145143933988891\\
			0.03125 0.0855570654287995\\
			0.015625 0.0619322821526808\\
			0.0078125 0.0615630421446912\\
			0.00390625 0.0647235017318998\\
			0.001953125 0.0667398700258338\\
			0.0009765625 0.0678229291748887\\
			0.00048828125 0.0716954295901103\\
			0.000244140625 0.071982646218066\\
			};

		\addplot[very thick, color=color2, mark=*, mark options={solid, color2}, mark size = 2.7]
		table[row sep=crcr]{%
			0.25 0.477736203489559\\
			0.125 0.251088725309882\\
			0.0625 0.130563431316275\\
			0.03125 0.0679167001776337\\
			0.015625 0.0362069410739145\\
			0.0078125 0.0207675780952759\\
			0.00390625 0.0143974877004487\\
			0.001953125 0.0131748756872731\\
			0.0009765625 0.0134175514573525\\
			0.00048828125 0.0167994454109269\\
			0.000244140625 0.0170237610929885\\
			};
				
		\addplot[very thick, color=color2, mark=*, mark options={solid, color2}, mark size = 2.7, dashed]
		table[row sep=crcr]{%
			0.25 0.475611182960448\\
			0.125 0.248392246234055\\
			0.0625 0.1275982638252\\
			0.03125 0.0646918154708138\\
			0.015625 0.0326454980204429\\
			0.0078125 0.0166698534198754\\
			0.00390625 0.00882391534505888\\
			0.001953125 0.00514983887501691\\
			0.0009765625 0.00374070725885023\\
			0.00048828125 0.00313162192007798\\
			0.000244140625 0.00308723236861582\\
			};

		\addplot[very thick, color=color4, mark=triangle*, mark options={solid}, mark size = 2.5, dashdotted]
		table[row sep=crcr]{%
			0.25 0.486808056638967\\
			0.125 0.263226713500303\\
			0.0625 0.145143933988893\\
			0.03125 0.0855570654288114\\
			0.015625 0.0619322821526746\\
			0.0078125 0.0615630421446741\\
			0.00390625 0.0647235017319093\\
			0.001953125 0.0667398700258369\\
			0.0009765625 0.0678229291747881\\
			0.00048828125 0.0716954295899568\\
			0.000244140625 0.0719826462183797\\
			};

		\addplot[very thick, color=color4, mark=triangle*, mark options={solid}, mark size = 2.5]
		table[row sep=crcr]{%
			0.25 0.477736203489558\\
			0.125 0.251088725309883\\
			0.0625 0.130563431316245\\
			0.03125 0.0679167001776271\\
			0.015625 0.0362069410739268\\
			0.0078125 0.0207675780953277\\
			0.00390625 0.0143974877004177\\
			0.001953125 0.0131748756872196\\
			0.0009765625 0.0134175514573976\\
			0.00048828125 0.0167994454110626\\
			0.000244140625 0.0170237610926539\\
			};
		
		\addplot[very thick, color=color4, mark=triangle*, mark options={solid}, mark size = 2.5, dashed]
		table[row sep=crcr]{%
			0.25 0.475611182960451\\
			0.125 0.248392246234063\\
			0.0625 0.127598263825201\\
			0.03125 0.0646918154708428\\
			0.015625 0.0326454980203592\\
			0.0078125 0.0166698534199608\\
			0.00390625 0.00882391534512231\\
			0.001953125 0.00514983887452808\\
			0.0009765625 0.00374070725882531\\
			0.00048828125 0.00313162191991918\\
			0.000244140625 0.0030872323687579\\
			};
		
	\end{axis}
	
\end{tikzpicture}%

%% file: pics/pictureExperimentConvergency3col.tex
%
%
\definecolor{mycolor1}{rgb}{0.12941,0.12941,0.12941}%
\begin{tikzpicture}

\begin{axis}[%
width=4.0in,
height=2.0in,
scale only axis,
xmin=1,
xmax=51,
xlabel style={font=\color{mycolor1}},
xlabel={$\ccc_0$},
xtick={5, 15, 25, 35, 45},
xticklabels={$1$, $2$, $3$, $4$, $5$},   
ymin=1,
ymax=51,
ytick={10, 20, 30, 40},
yticklabels={0.1, 0.2, 0.3, 0.4},  
ylabel style={font=\color{mycolor1}},
ylabel={$\alpha=\beta$},
axis background/.style={fill=white},
colormap={custom}{color(0)=(color2) color(10)=(yellow) color(100)=(red)},
]

\addplot[%
surf,
shader=flat corner, draw=mycolor1, 
mesh/rows=51]
table[row sep=crcr, point meta=\thisrow{c}] {%
x	y	c\\
1	1	0\\
1	2	10\\
1	3	10\\
1	4	10\\
1	5	10\\
1	6	10\\
1	7	10\\
1	8	10\\
1	9	10\\
1	10	10\\
1	11	10\\
1	12	10\\
1	13	10\\
1	14	10\\
1	15	10\\
1	16	10\\
1	17	10\\
1	18	100\\
1	19	100\\
1	20	100\\
1	21	100\\
1	22	100\\
1	23	100\\
1	24	100\\
1	25	100\\
1	26	100\\
1	27	100\\
1	28	100\\
1	29	100\\
1	30	100\\
1	31	100\\
1	32	100\\
1	33	100\\
1	34	100\\
1	35	100\\
1	36	100\\
1	37	100\\
1	38	100\\
1	39	100\\
1	40	100\\
1	41	100\\
1	42	100\\
1	43	100\\
1	44	100\\
1	45	100\\
1	46	100\\
1	47	100\\
1	48	100\\
1	49	100\\
1	50	100\\
1	51	100\\
2	1	0\\
2	2	0\\
2	3	0\\
2	4	0\\
2	5	0\\
2	6	0\\
2	7	0\\
2	8	0\\
2	9	10\\
2	10	10\\
2	11	10\\
2	12	10\\
2	13	10\\
2	14	10\\
2	15	10\\
2	16	10\\
2	17	10\\
2	18	10\\
2	19	10\\
2	20	100\\
2	21	100\\
2	22	100\\
2	23	100\\
2	24	100\\
2	25	100\\
2	26	100\\
2	27	100\\
2	28	100\\
2	29	100\\
2	30	100\\
2	31	100\\
2	32	100\\
2	33	100\\
2	34	100\\
2	35	100\\
2	36	100\\
2	37	100\\
2	38	100\\
2	39	100\\
2	40	100\\
2	41	100\\
2	42	100\\
2	43	100\\
2	44	100\\
2	45	100\\
2	46	100\\
2	47	100\\
2	48	100\\
2	49	100\\
2	50	100\\
2	51	100\\
3	1	0\\
3	2	0\\
3	3	0\\
3	4	0\\
3	5	0\\
3	6	0\\
3	7	0\\
3	8	0\\
3	9	0\\
3	10	0\\
3	11	0\\
3	12	10\\
3	13	10\\
3	14	10\\
3	15	10\\
3	16	10\\
3	17	10\\
3	18	10\\
3	19	10\\
3	20	10\\
3	21	10\\
3	22	100\\
3	23	100\\
3	24	100\\
3	25	100\\
3	26	100\\
3	27	100\\
3	28	100\\
3	29	100\\
3	30	100\\
3	31	100\\
3	32	100\\
3	33	100\\
3	34	100\\
3	35	100\\
3	36	100\\
3	37	100\\
3	38	100\\
3	39	100\\
3	40	100\\
3	41	100\\
3	42	100\\
3	43	100\\
3	44	100\\
3	45	100\\
3	46	100\\
3	47	100\\
3	48	100\\
3	49	100\\
3	50	100\\
3	51	100\\
4	1	0\\
4	2	0\\
4	3	0\\
4	4	0\\
4	5	0\\
4	6	0\\
4	7	0\\
4	8	0\\
4	9	0\\
4	10	0\\
4	11	0\\
4	12	0\\
4	13	0\\
4	14	10\\
4	15	10\\
4	16	10\\
4	17	10\\
4	18	10\\
4	19	10\\
4	20	10\\
4	21	10\\
4	22	10\\
4	23	100\\
4	24	100\\
4	25	100\\
4	26	100\\
4	27	100\\
4	28	100\\
4	29	100\\
4	30	100\\
4	31	100\\
4	32	100\\
4	33	100\\
4	34	100\\
4	35	100\\
4	36	100\\
4	37	100\\
4	38	100\\
4	39	100\\
4	40	100\\
4	41	100\\
4	42	100\\
4	43	100\\
4	44	100\\
4	45	100\\
4	46	100\\
4	47	100\\
4	48	100\\
4	49	100\\
4	50	100\\
4	51	100\\
5	1	0\\
5	2	0\\
5	3	0\\
5	4	0\\
5	5	0\\
5	6	0\\
5	7	0\\
5	8	0\\
5	9	0\\
5	10	0\\
5	11	0\\
5	12	0\\
5	13	0\\
5	14	0\\
5	15	0\\
5	16	10\\
5	17	10\\
5	18	10\\
5	19	10\\
5	20	10\\
5	21	10\\
5	22	10\\
5	23	10\\
5	24	100\\
5	25	100\\
5	26	100\\
5	27	100\\
5	28	100\\
5	29	100\\
5	30	100\\
5	31	100\\
5	32	100\\
5	33	100\\
5	34	100\\
5	35	100\\
5	36	100\\
5	37	100\\
5	38	100\\
5	39	100\\
5	40	100\\
5	41	100\\
5	42	100\\
5	43	100\\
5	44	100\\
5	45	100\\
5	46	100\\
5	47	100\\
5	48	100\\
5	49	100\\
5	50	100\\
5	51	100\\
6	1	0\\
6	2	0\\
6	3	0\\
6	4	0\\
6	5	0\\
6	6	0\\
6	7	0\\
6	8	0\\
6	9	0\\
6	10	0\\
6	11	0\\
6	12	0\\
6	13	0\\
6	14	0\\
6	15	0\\
6	16	0\\
6	17	0\\
6	18	10\\
6	19	10\\
6	20	10\\
6	21	10\\
6	22	10\\
6	23	10\\
6	24	10\\
6	25	10\\
6	26	100\\
6	27	100\\
6	28	100\\
6	29	100\\
6	30	100\\
6	31	100\\
6	32	100\\
6	33	100\\
6	34	100\\
6	35	100\\
6	36	100\\
6	37	100\\
6	38	100\\
6	39	100\\
6	40	100\\
6	41	100\\
6	42	100\\
6	43	100\\
6	44	100\\
6	45	100\\
6	46	100\\
6	47	100\\
6	48	100\\
6	49	100\\
6	50	100\\
6	51	100\\
7	1	0\\
7	2	0\\
7	3	0\\
7	4	0\\
7	5	0\\
7	6	0\\
7	7	0\\
7	8	0\\
7	9	0\\
7	10	0\\
7	11	0\\
7	12	0\\
7	13	0\\
7	14	0\\
7	15	0\\
7	16	0\\
7	17	0\\
7	18	0\\
7	19	0\\
7	20	10\\
7	21	10\\
7	22	10\\
7	23	10\\
7	24	10\\
7	25	10\\
7	26	10\\
7	27	100\\
7	28	100\\
7	29	100\\
7	30	100\\
7	31	100\\
7	32	100\\
7	33	100\\
7	34	100\\
7	35	100\\
7	36	100\\
7	37	100\\
7	38	100\\
7	39	100\\
7	40	100\\
7	41	100\\
7	42	100\\
7	43	100\\
7	44	100\\
7	45	100\\
7	46	100\\
7	47	100\\
7	48	100\\
7	49	100\\
7	50	100\\
7	51	100\\
8	1	0\\
8	2	0\\
8	3	0\\
8	4	0\\
8	5	0\\
8	6	0\\
8	7	0\\
8	8	0\\
8	9	0\\
8	10	0\\
8	11	0\\
8	12	0\\
8	13	0\\
8	14	0\\
8	15	0\\
8	16	0\\
8	17	0\\
8	18	0\\
8	19	0\\
8	20	0\\
8	21	10\\
8	22	10\\
8	23	10\\
8	24	10\\
8	25	10\\
8	26	10\\
8	27	100\\
8	28	100\\
8	29	100\\
8	30	100\\
8	31	100\\
8	32	100\\
8	33	100\\
8	34	100\\
8	35	100\\
8	36	100\\
8	37	100\\
8	38	100\\
8	39	100\\
8	40	100\\
8	41	100\\
8	42	100\\
8	43	100\\
8	44	100\\
8	45	100\\
8	46	100\\
8	47	100\\
8	48	100\\
8	49	100\\
8	50	100\\
8	51	100\\
9	1	0\\
9	2	0\\
9	3	0\\
9	4	0\\
9	5	0\\
9	6	0\\
9	7	0\\
9	8	0\\
9	9	0\\
9	10	0\\
9	11	0\\
9	12	0\\
9	13	0\\
9	14	0\\
9	15	0\\
9	16	0\\
9	17	0\\
9	18	0\\
9	19	0\\
9	20	0\\
9	21	0\\
9	22	0\\
9	23	10\\
9	24	10\\
9	25	10\\
9	26	10\\
9	27	10\\
9	28	100\\
9	29	100\\
9	30	100\\
9	31	100\\
9	32	100\\
9	33	100\\
9	34	100\\
9	35	100\\
9	36	100\\
9	37	100\\
9	38	100\\
9	39	100\\
9	40	100\\
9	41	100\\
9	42	100\\
9	43	100\\
9	44	100\\
9	45	100\\
9	46	100\\
9	47	100\\
9	48	100\\
9	49	100\\
9	50	100\\
9	51	100\\
10	1	0\\
10	2	0\\
10	3	0\\
10	4	0\\
10	5	0\\
10	6	0\\
10	7	0\\
10	8	0\\
10	9	0\\
10	10	0\\
10	11	0\\
10	12	0\\
10	13	0\\
10	14	0\\
10	15	0\\
10	16	0\\
10	17	0\\
10	18	0\\
10	19	0\\
10	20	0\\
10	21	0\\
10	22	0\\
10	23	0\\
10	24	10\\
10	25	10\\
10	26	10\\
10	27	10\\
10	28	10\\
10	29	100\\
10	30	100\\
10	31	100\\
10	32	100\\
10	33	100\\
10	34	100\\
10	35	100\\
10	36	100\\
10	37	100\\
10	38	100\\
10	39	100\\
10	40	100\\
10	41	100\\
10	42	100\\
10	43	100\\
10	44	100\\
10	45	100\\
10	46	100\\
10	47	100\\
10	48	100\\
10	49	100\\
10	50	100\\
10	51	100\\
11	1	0\\
11	2	0\\
11	3	0\\
11	4	0\\
11	5	0\\
11	6	0\\
11	7	0\\
11	8	0\\
11	9	0\\
11	10	0\\
11	11	0\\
11	12	0\\
11	13	0\\
11	14	0\\
11	15	0\\
11	16	0\\
11	17	0\\
11	18	0\\
11	19	0\\
11	20	0\\
11	21	0\\
11	22	0\\
11	23	0\\
11	24	0\\
11	25	10\\
11	26	10\\
11	27	10\\
11	28	10\\
11	29	10\\
11	30	100\\
11	31	100\\
11	32	100\\
11	33	100\\
11	34	100\\
11	35	100\\
11	36	100\\
11	37	100\\
11	38	100\\
11	39	100\\
11	40	100\\
11	41	100\\
11	42	100\\
11	43	100\\
11	44	100\\
11	45	100\\
11	46	100\\
11	47	100\\
11	48	100\\
11	49	100\\
11	50	100\\
11	51	100\\
12	1	0\\
12	2	0\\
12	3	0\\
12	4	0\\
12	5	0\\
12	6	0\\
12	7	0\\
12	8	0\\
12	9	0\\
12	10	0\\
12	11	0\\
12	12	0\\
12	13	0\\
12	14	0\\
12	15	0\\
12	16	0\\
12	17	0\\
12	18	0\\
12	19	0\\
12	20	0\\
12	21	0\\
12	22	0\\
12	23	0\\
12	24	0\\
12	25	0\\
12	26	10\\
12	27	10\\
12	28	10\\
12	29	10\\
12	30	10\\
12	31	100\\
12	32	100\\
12	33	100\\
12	34	100\\
12	35	100\\
12	36	100\\
12	37	100\\
12	38	100\\
12	39	100\\
12	40	100\\
12	41	100\\
12	42	100\\
12	43	100\\
12	44	100\\
12	45	100\\
12	46	100\\
12	47	100\\
12	48	100\\
12	49	100\\
12	50	100\\
12	51	100\\
13	1	0\\
13	2	0\\
13	3	0\\
13	4	0\\
13	5	0\\
13	6	0\\
13	7	0\\
13	8	0\\
13	9	0\\
13	10	0\\
13	11	0\\
13	12	0\\
13	13	0\\
13	14	0\\
13	15	0\\
13	16	0\\
13	17	0\\
13	18	0\\
13	19	0\\
13	20	0\\
13	21	0\\
13	22	0\\
13	23	0\\
13	24	0\\
13	25	0\\
13	26	0\\
13	27	10\\
13	28	10\\
13	29	10\\
13	30	10\\
13	31	100\\
13	32	100\\
13	33	100\\
13	34	100\\
13	35	100\\
13	36	100\\
13	37	100\\
13	38	100\\
13	39	100\\
13	40	100\\
13	41	100\\
13	42	100\\
13	43	100\\
13	44	100\\
13	45	100\\
13	46	100\\
13	47	100\\
13	48	100\\
13	49	100\\
13	50	100\\
13	51	100\\
14	1	0\\
14	2	0\\
14	3	0\\
14	4	0\\
14	5	0\\
14	6	0\\
14	7	0\\
14	8	0\\
14	9	0\\
14	10	0\\
14	11	0\\
14	12	0\\
14	13	0\\
14	14	0\\
14	15	0\\
14	16	0\\
14	17	0\\
14	18	0\\
14	19	0\\
14	20	0\\
14	21	0\\
14	22	0\\
14	23	0\\
14	24	0\\
14	25	0\\
14	26	0\\
14	27	0\\
14	28	10\\
14	29	10\\
14	30	10\\
14	31	10\\
14	32	100\\
14	33	100\\
14	34	100\\
14	35	100\\
14	36	100\\
14	37	100\\
14	38	100\\
14	39	100\\
14	40	100\\
14	41	100\\
14	42	100\\
14	43	100\\
14	44	100\\
14	45	100\\
14	46	100\\
14	47	100\\
14	48	100\\
14	49	100\\
14	50	100\\
14	51	100\\
15	1	0\\
15	2	0\\
15	3	0\\
15	4	0\\
15	5	0\\
15	6	0\\
15	7	0\\
15	8	0\\
15	9	0\\
15	10	0\\
15	11	0\\
15	12	0\\
15	13	0\\
15	14	0\\
15	15	0\\
15	16	0\\
15	17	0\\
15	18	0\\
15	19	0\\
15	20	0\\
15	21	0\\
15	22	0\\
15	23	0\\
15	24	0\\
15	25	0\\
15	26	0\\
15	27	0\\
15	28	0\\
15	29	10\\
15	30	10\\
15	31	10\\
15	32	100\\
15	33	100\\
15	34	100\\
15	35	100\\
15	36	100\\
15	37	100\\
15	38	100\\
15	39	100\\
15	40	100\\
15	41	100\\
15	42	100\\
15	43	100\\
15	44	100\\
15	45	100\\
15	46	100\\
15	47	100\\
15	48	100\\
15	49	100\\
15	50	100\\
15	51	100\\
16	1	0\\
16	2	0\\
16	3	0\\
16	4	0\\
16	5	0\\
16	6	0\\
16	7	0\\
16	8	0\\
16	9	0\\
16	10	0\\
16	11	0\\
16	12	0\\
16	13	0\\
16	14	0\\
16	15	0\\
16	16	0\\
16	17	0\\
16	18	0\\
16	19	0\\
16	20	0\\
16	21	0\\
16	22	0\\
16	23	0\\
16	24	0\\
16	25	0\\
16	26	0\\
16	27	0\\
16	28	0\\
16	29	0\\
16	30	10\\
16	31	10\\
16	32	10\\
16	33	100\\
16	34	100\\
16	35	100\\
16	36	100\\
16	37	100\\
16	38	100\\
16	39	100\\
16	40	100\\
16	41	100\\
16	42	100\\
16	43	100\\
16	44	100\\
16	45	100\\
16	46	100\\
16	47	100\\
16	48	100\\
16	49	100\\
16	50	100\\
16	51	100\\
17	1	0\\
17	2	0\\
17	3	0\\
17	4	0\\
17	5	0\\
17	6	0\\
17	7	0\\
17	8	0\\
17	9	0\\
17	10	0\\
17	11	0\\
17	12	0\\
17	13	0\\
17	14	0\\
17	15	0\\
17	16	0\\
17	17	0\\
17	18	0\\
17	19	0\\
17	20	0\\
17	21	0\\
17	22	0\\
17	23	0\\
17	24	0\\
17	25	0\\
17	26	0\\
17	27	0\\
17	28	0\\
17	29	0\\
17	30	10\\
17	31	10\\
17	32	10\\
17	33	100\\
17	34	100\\
17	35	100\\
17	36	100\\
17	37	100\\
17	38	100\\
17	39	100\\
17	40	100\\
17	41	100\\
17	42	100\\
17	43	100\\
17	44	100\\
17	45	100\\
17	46	100\\
17	47	100\\
17	48	100\\
17	49	100\\
17	50	100\\
17	51	100\\
18	1	0\\
18	2	0\\
18	3	0\\
18	4	0\\
18	5	0\\
18	6	0\\
18	7	0\\
18	8	0\\
18	9	0\\
18	10	0\\
18	11	0\\
18	12	0\\
18	13	0\\
18	14	0\\
18	15	0\\
18	16	0\\
18	17	0\\
18	18	0\\
18	19	0\\
18	20	0\\
18	21	0\\
18	22	0\\
18	23	0\\
18	24	0\\
18	25	0\\
18	26	0\\
18	27	0\\
18	28	0\\
18	29	0\\
18	30	10\\
18	31	10\\
18	32	10\\
18	33	10\\
18	34	100\\
18	35	100\\
18	36	100\\
18	37	100\\
18	38	100\\
18	39	100\\
18	40	100\\
18	41	100\\
18	42	100\\
18	43	100\\
18	44	100\\
18	45	100\\
18	46	100\\
18	47	100\\
18	48	100\\
18	49	100\\
18	50	100\\
18	51	100\\
19	1	0\\
19	2	0\\
19	3	0\\
19	4	0\\
19	5	0\\
19	6	0\\
19	7	0\\
19	8	0\\
19	9	0\\
19	10	0\\
19	11	0\\
19	12	0\\
19	13	0\\
19	14	0\\
19	15	0\\
19	16	0\\
19	17	0\\
19	18	0\\
19	19	0\\
19	20	0\\
19	21	0\\
19	22	0\\
19	23	0\\
19	24	0\\
19	25	0\\
19	26	0\\
19	27	0\\
19	28	0\\
19	29	0\\
19	30	10\\
19	31	10\\
19	32	10\\
19	33	10\\
19	34	100\\
19	35	100\\
19	36	100\\
19	37	100\\
19	38	100\\
19	39	100\\
19	40	100\\
19	41	100\\
19	42	100\\
19	43	100\\
19	44	100\\
19	45	100\\
19	46	100\\
19	47	100\\
19	48	100\\
19	49	100\\
19	50	100\\
19	51	100\\
20	1	0\\
20	2	0\\
20	3	0\\
20	4	0\\
20	5	0\\
20	6	0\\
20	7	0\\
20	8	0\\
20	9	0\\
20	10	0\\
20	11	0\\
20	12	0\\
20	13	0\\
20	14	0\\
20	15	0\\
20	16	0\\
20	17	0\\
20	18	0\\
20	19	0\\
20	20	0\\
20	21	0\\
20	22	0\\
20	23	0\\
20	24	0\\
20	25	0\\
20	26	0\\
20	27	0\\
20	28	0\\
20	29	0\\
20	30	10\\
20	31	10\\
20	32	10\\
20	33	10\\
20	34	10\\
20	35	100\\
20	36	100\\
20	37	100\\
20	38	100\\
20	39	100\\
20	40	100\\
20	41	100\\
20	42	100\\
20	43	100\\
20	44	100\\
20	45	100\\
20	46	100\\
20	47	100\\
20	48	100\\
20	49	100\\
20	50	100\\
20	51	100\\
21	1	0\\
21	2	0\\
21	3	0\\
21	4	0\\
21	5	0\\
21	6	0\\
21	7	0\\
21	8	0\\
21	9	0\\
21	10	0\\
21	11	0\\
21	12	0\\
21	13	0\\
21	14	0\\
21	15	0\\
21	16	0\\
21	17	0\\
21	18	0\\
21	19	0\\
21	20	0\\
21	21	0\\
21	22	0\\
21	23	0\\
21	24	0\\
21	25	0\\
21	26	0\\
21	27	0\\
21	28	0\\
21	29	0\\
21	30	10\\
21	31	10\\
21	32	10\\
21	33	10\\
21	34	10\\
21	35	100\\
21	36	100\\
21	37	100\\
21	38	100\\
21	39	100\\
21	40	100\\
21	41	100\\
21	42	100\\
21	43	100\\
21	44	100\\
21	45	100\\
21	46	100\\
21	47	100\\
21	48	100\\
21	49	100\\
21	50	100\\
21	51	100\\
22	1	0\\
22	2	0\\
22	3	0\\
22	4	0\\
22	5	0\\
22	6	0\\
22	7	0\\
22	8	0\\
22	9	0\\
22	10	0\\
22	11	0\\
22	12	0\\
22	13	0\\
22	14	0\\
22	15	0\\
22	16	0\\
22	17	0\\
22	18	0\\
22	19	0\\
22	20	0\\
22	21	0\\
22	22	0\\
22	23	0\\
22	24	0\\
22	25	0\\
22	26	0\\
22	27	0\\
22	28	0\\
22	29	0\\
22	30	10\\
22	31	10\\
22	32	10\\
22	33	10\\
22	34	10\\
22	35	10\\
22	36	100\\
22	37	100\\
22	38	100\\
22	39	100\\
22	40	100\\
22	41	100\\
22	42	100\\
22	43	100\\
22	44	100\\
22	45	100\\
22	46	100\\
22	47	100\\
22	48	100\\
22	49	100\\
22	50	100\\
22	51	100\\
23	1	0\\
23	2	0\\
23	3	0\\
23	4	0\\
23	5	0\\
23	6	0\\
23	7	0\\
23	8	0\\
23	9	0\\
23	10	0\\
23	11	0\\
23	12	0\\
23	13	0\\
23	14	0\\
23	15	0\\
23	16	0\\
23	17	0\\
23	18	0\\
23	19	0\\
23	20	0\\
23	21	0\\
23	22	0\\
23	23	0\\
23	24	0\\
23	25	0\\
23	26	0\\
23	27	0\\
23	28	0\\
23	29	0\\
23	30	10\\
23	31	10\\
23	32	10\\
23	33	10\\
23	34	10\\
23	35	10\\
23	36	100\\
23	37	100\\
23	38	100\\
23	39	100\\
23	40	100\\
23	41	100\\
23	42	100\\
23	43	100\\
23	44	100\\
23	45	100\\
23	46	100\\
23	47	100\\
23	48	100\\
23	49	100\\
23	50	100\\
23	51	100\\
24	1	0\\
24	2	0\\
24	3	0\\
24	4	0\\
24	5	0\\
24	6	0\\
24	7	0\\
24	8	0\\
24	9	0\\
24	10	0\\
24	11	0\\
24	12	0\\
24	13	0\\
24	14	0\\
24	15	0\\
24	16	0\\
24	17	0\\
24	18	0\\
24	19	0\\
24	20	0\\
24	21	0\\
24	22	0\\
24	23	0\\
24	24	0\\
24	25	0\\
24	26	0\\
24	27	0\\
24	28	0\\
24	29	0\\
24	30	10\\
24	31	10\\
24	32	10\\
24	33	10\\
24	34	10\\
24	35	10\\
24	36	100\\
24	37	100\\
24	38	100\\
24	39	100\\
24	40	100\\
24	41	100\\
24	42	100\\
24	43	100\\
24	44	100\\
24	45	100\\
24	46	100\\
24	47	100\\
24	48	100\\
24	49	100\\
24	50	100\\
24	51	100\\
25	1	0\\
25	2	0\\
25	3	0\\
25	4	0\\
25	5	0\\
25	6	0\\
25	7	0\\
25	8	0\\
25	9	0\\
25	10	0\\
25	11	0\\
25	12	0\\
25	13	0\\
25	14	0\\
25	15	0\\
25	16	0\\
25	17	0\\
25	18	0\\
25	19	0\\
25	20	0\\
25	21	0\\
25	22	0\\
25	23	0\\
25	24	0\\
25	25	0\\
25	26	0\\
25	27	0\\
25	28	0\\
25	29	0\\
25	30	10\\
25	31	10\\
25	32	10\\
25	33	10\\
25	34	10\\
25	35	10\\
25	36	10\\
25	37	100\\
25	38	100\\
25	39	100\\
25	40	100\\
25	41	100\\
25	42	100\\
25	43	100\\
25	44	100\\
25	45	100\\
25	46	100\\
25	47	100\\
25	48	100\\
25	49	100\\
25	50	100\\
25	51	100\\
26	1	0\\
26	2	0\\
26	3	0\\
26	4	0\\
26	5	0\\
26	6	0\\
26	7	0\\
26	8	0\\
26	9	0\\
26	10	0\\
26	11	0\\
26	12	0\\
26	13	0\\
26	14	0\\
26	15	0\\
26	16	0\\
26	17	0\\
26	18	0\\
26	19	0\\
26	20	0\\
26	21	0\\
26	22	0\\
26	23	0\\
26	24	0\\
26	25	0\\
26	26	0\\
26	27	0\\
26	28	0\\
26	29	0\\
26	30	10\\
26	31	10\\
26	32	10\\
26	33	10\\
26	34	10\\
26	35	10\\
26	36	10\\
26	37	100\\
26	38	100\\
26	39	100\\
26	40	100\\
26	41	100\\
26	42	100\\
26	43	100\\
26	44	100\\
26	45	100\\
26	46	100\\
26	47	100\\
26	48	100\\
26	49	100\\
26	50	100\\
26	51	100\\
27	1	0\\
27	2	0\\
27	3	0\\
27	4	0\\
27	5	0\\
27	6	0\\
27	7	0\\
27	8	0\\
27	9	0\\
27	10	0\\
27	11	0\\
27	12	0\\
27	13	0\\
27	14	0\\
27	15	0\\
27	16	0\\
27	17	0\\
27	18	0\\
27	19	0\\
27	20	0\\
27	21	0\\
27	22	0\\
27	23	0\\
27	24	0\\
27	25	0\\
27	26	0\\
27	27	0\\
27	28	0\\
27	29	0\\
27	30	10\\
27	31	10\\
27	32	10\\
27	33	10\\
27	34	10\\
27	35	10\\
27	36	10\\
27	37	100\\
27	38	100\\
27	39	100\\
27	40	100\\
27	41	100\\
27	42	100\\
27	43	100\\
27	44	100\\
27	45	100\\
27	46	100\\
27	47	100\\
27	48	100\\
27	49	100\\
27	50	100\\
27	51	100\\
28	1	0\\
28	2	0\\
28	3	0\\
28	4	0\\
28	5	0\\
28	6	0\\
28	7	0\\
28	8	0\\
28	9	0\\
28	10	0\\
28	11	0\\
28	12	0\\
28	13	0\\
28	14	0\\
28	15	0\\
28	16	0\\
28	17	0\\
28	18	0\\
28	19	0\\
28	20	0\\
28	21	0\\
28	22	0\\
28	23	0\\
28	24	0\\
28	25	0\\
28	26	0\\
28	27	0\\
28	28	0\\
28	29	0\\
28	30	10\\
28	31	10\\
28	32	10\\
28	33	10\\
28	34	10\\
28	35	10\\
28	36	10\\
28	37	10\\
28	38	100\\
28	39	100\\
28	40	100\\
28	41	100\\
28	42	100\\
28	43	100\\
28	44	100\\
28	45	100\\
28	46	100\\
28	47	100\\
28	48	100\\
28	49	100\\
28	50	100\\
28	51	100\\
29	1	0\\
29	2	0\\
29	3	0\\
29	4	0\\
29	5	0\\
29	6	0\\
29	7	0\\
29	8	0\\
29	9	0\\
29	10	0\\
29	11	0\\
29	12	0\\
29	13	0\\
29	14	0\\
29	15	0\\
29	16	0\\
29	17	0\\
29	18	0\\
29	19	0\\
29	20	0\\
29	21	0\\
29	22	0\\
29	23	0\\
29	24	0\\
29	25	0\\
29	26	0\\
29	27	0\\
29	28	0\\
29	29	0\\
29	30	10\\
29	31	10\\
29	32	10\\
29	33	10\\
29	34	10\\
29	35	10\\
29	36	10\\
29	37	10\\
29	38	100\\
29	39	100\\
29	40	100\\
29	41	100\\
29	42	100\\
29	43	100\\
29	44	100\\
29	45	100\\
29	46	100\\
29	47	100\\
29	48	100\\
29	49	100\\
29	50	100\\
29	51	100\\
30	1	0\\
30	2	0\\
30	3	0\\
30	4	0\\
30	5	0\\
30	6	0\\
30	7	0\\
30	8	0\\
30	9	0\\
30	10	0\\
30	11	0\\
30	12	0\\
30	13	0\\
30	14	0\\
30	15	0\\
30	16	0\\
30	17	0\\
30	18	0\\
30	19	0\\
30	20	0\\
30	21	0\\
30	22	0\\
30	23	0\\
30	24	0\\
30	25	0\\
30	26	0\\
30	27	0\\
30	28	0\\
30	29	0\\
30	30	10\\
30	31	10\\
30	32	10\\
30	33	10\\
30	34	10\\
30	35	10\\
30	36	10\\
30	37	10\\
30	38	100\\
30	39	100\\
30	40	100\\
30	41	100\\
30	42	100\\
30	43	100\\
30	44	100\\
30	45	100\\
30	46	100\\
30	47	100\\
30	48	100\\
30	49	100\\
30	50	100\\
30	51	100\\
31	1	0\\
31	2	0\\
31	3	0\\
31	4	0\\
31	5	0\\
31	6	0\\
31	7	0\\
31	8	0\\
31	9	0\\
31	10	0\\
31	11	0\\
31	12	0\\
31	13	0\\
31	14	0\\
31	15	0\\
31	16	0\\
31	17	0\\
31	18	0\\
31	19	0\\
31	20	0\\
31	21	0\\
31	22	0\\
31	23	0\\
31	24	0\\
31	25	0\\
31	26	0\\
31	27	0\\
31	28	0\\
31	29	0\\
31	30	10\\
31	31	10\\
31	32	10\\
31	33	10\\
31	34	10\\
31	35	10\\
31	36	10\\
31	37	10\\
31	38	10\\
31	39	100\\
31	40	100\\
31	41	100\\
31	42	100\\
31	43	100\\
31	44	100\\
31	45	100\\
31	46	100\\
31	47	100\\
31	48	100\\
31	49	100\\
31	50	100\\
31	51	100\\
32	1	0\\
32	2	0\\
32	3	0\\
32	4	0\\
32	5	0\\
32	6	0\\
32	7	0\\
32	8	0\\
32	9	0\\
32	10	0\\
32	11	0\\
32	12	0\\
32	13	0\\
32	14	0\\
32	15	0\\
32	16	0\\
32	17	0\\
32	18	0\\
32	19	0\\
32	20	0\\
32	21	0\\
32	22	0\\
32	23	0\\
32	24	0\\
32	25	0\\
32	26	0\\
32	27	0\\
32	28	0\\
32	29	0\\
32	30	10\\
32	31	10\\
32	32	10\\
32	33	10\\
32	34	10\\
32	35	10\\
32	36	10\\
32	37	10\\
32	38	10\\
32	39	100\\
32	40	100\\
32	41	100\\
32	42	100\\
32	43	100\\
32	44	100\\
32	45	100\\
32	46	100\\
32	47	100\\
32	48	100\\
32	49	100\\
32	50	100\\
32	51	100\\
33	1	0\\
33	2	0\\
33	3	0\\
33	4	0\\
33	5	0\\
33	6	0\\
33	7	0\\
33	8	0\\
33	9	0\\
33	10	0\\
33	11	0\\
33	12	0\\
33	13	0\\
33	14	0\\
33	15	0\\
33	16	0\\
33	17	0\\
33	18	0\\
33	19	0\\
33	20	0\\
33	21	0\\
33	22	0\\
33	23	0\\
33	24	0\\
33	25	0\\
33	26	0\\
33	27	0\\
33	28	0\\
33	29	0\\
33	30	10\\
33	31	10\\
33	32	10\\
33	33	10\\
33	34	10\\
33	35	10\\
33	36	10\\
33	37	10\\
33	38	10\\
33	39	100\\
33	40	100\\
33	41	100\\
33	42	100\\
33	43	100\\
33	44	100\\
33	45	100\\
33	46	100\\
33	47	100\\
33	48	100\\
33	49	100\\
33	50	100\\
33	51	100\\
34	1	0\\
34	2	0\\
34	3	0\\
34	4	0\\
34	5	0\\
34	6	0\\
34	7	0\\
34	8	0\\
34	9	0\\
34	10	0\\
34	11	0\\
34	12	0\\
34	13	0\\
34	14	0\\
34	15	0\\
34	16	0\\
34	17	0\\
34	18	0\\
34	19	0\\
34	20	0\\
34	21	0\\
34	22	0\\
34	23	0\\
34	24	0\\
34	25	0\\
34	26	0\\
34	27	0\\
34	28	0\\
34	29	0\\
34	30	10\\
34	31	10\\
34	32	10\\
34	33	10\\
34	34	10\\
34	35	10\\
34	36	10\\
34	37	10\\
34	38	10\\
34	39	100\\
34	40	100\\
34	41	100\\
34	42	100\\
34	43	100\\
34	44	100\\
34	45	100\\
34	46	100\\
34	47	100\\
34	48	100\\
34	49	100\\
34	50	100\\
34	51	100\\
35	1	0\\
35	2	0\\
35	3	0\\
35	4	0\\
35	5	0\\
35	6	0\\
35	7	0\\
35	8	0\\
35	9	0\\
35	10	0\\
35	11	0\\
35	12	0\\
35	13	0\\
35	14	0\\
35	15	0\\
35	16	0\\
35	17	0\\
35	18	0\\
35	19	0\\
35	20	0\\
35	21	0\\
35	22	0\\
35	23	0\\
35	24	0\\
35	25	0\\
35	26	0\\
35	27	0\\
35	28	0\\
35	29	0\\
35	30	10\\
35	31	10\\
35	32	10\\
35	33	10\\
35	34	10\\
35	35	10\\
35	36	10\\
35	37	10\\
35	38	10\\
35	39	10\\
35	40	100\\
35	41	100\\
35	42	100\\
35	43	100\\
35	44	100\\
35	45	100\\
35	46	100\\
35	47	100\\
35	48	100\\
35	49	100\\
35	50	100\\
35	51	100\\
36	1	0\\
36	2	0\\
36	3	0\\
36	4	0\\
36	5	0\\
36	6	0\\
36	7	0\\
36	8	0\\
36	9	0\\
36	10	0\\
36	11	0\\
36	12	0\\
36	13	0\\
36	14	0\\
36	15	0\\
36	16	0\\
36	17	0\\
36	18	0\\
36	19	0\\
36	20	0\\
36	21	0\\
36	22	0\\
36	23	0\\
36	24	0\\
36	25	0\\
36	26	0\\
36	27	0\\
36	28	0\\
36	29	0\\
36	30	10\\
36	31	10\\
36	32	10\\
36	33	10\\
36	34	10\\
36	35	10\\
36	36	10\\
36	37	10\\
36	38	10\\
36	39	10\\
36	40	100\\
36	41	100\\
36	42	100\\
36	43	100\\
36	44	100\\
36	45	100\\
36	46	100\\
36	47	100\\
36	48	100\\
36	49	100\\
36	50	100\\
36	51	100\\
37	1	0\\
37	2	0\\
37	3	0\\
37	4	0\\
37	5	0\\
37	6	0\\
37	7	0\\
37	8	0\\
37	9	0\\
37	10	0\\
37	11	0\\
37	12	0\\
37	13	0\\
37	14	0\\
37	15	0\\
37	16	0\\
37	17	0\\
37	18	0\\
37	19	0\\
37	20	0\\
37	21	0\\
37	22	0\\
37	23	0\\
37	24	0\\
37	25	0\\
37	26	0\\
37	27	0\\
37	28	0\\
37	29	0\\
37	30	10\\
37	31	10\\
37	32	10\\
37	33	10\\
37	34	10\\
37	35	10\\
37	36	10\\
37	37	10\\
37	38	10\\
37	39	10\\
37	40	100\\
37	41	100\\
37	42	100\\
37	43	100\\
37	44	100\\
37	45	100\\
37	46	100\\
37	47	100\\
37	48	100\\
37	49	100\\
37	50	100\\
37	51	100\\
38	1	0\\
38	2	0\\
38	3	0\\
38	4	0\\
38	5	0\\
38	6	0\\
38	7	0\\
38	8	0\\
38	9	0\\
38	10	0\\
38	11	0\\
38	12	0\\
38	13	0\\
38	14	0\\
38	15	0\\
38	16	0\\
38	17	0\\
38	18	0\\
38	19	0\\
38	20	0\\
38	21	0\\
38	22	0\\
38	23	0\\
38	24	0\\
38	25	0\\
38	26	0\\
38	27	0\\
38	28	0\\
38	29	0\\
38	30	10\\
38	31	10\\
38	32	10\\
38	33	10\\
38	34	10\\
38	35	10\\
38	36	10\\
38	37	10\\
38	38	10\\
38	39	10\\
38	40	100\\
38	41	100\\
38	42	100\\
38	43	100\\
38	44	100\\
38	45	100\\
38	46	100\\
38	47	100\\
38	48	100\\
38	49	100\\
38	50	100\\
38	51	100\\
39	1	0\\
39	2	0\\
39	3	0\\
39	4	0\\
39	5	0\\
39	6	0\\
39	7	0\\
39	8	0\\
39	9	0\\
39	10	0\\
39	11	0\\
39	12	0\\
39	13	0\\
39	14	0\\
39	15	0\\
39	16	0\\
39	17	0\\
39	18	0\\
39	19	0\\
39	20	0\\
39	21	0\\
39	22	0\\
39	23	0\\
39	24	0\\
39	25	0\\
39	26	0\\
39	27	0\\
39	28	0\\
39	29	0\\
39	30	10\\
39	31	10\\
39	32	10\\
39	33	10\\
39	34	10\\
39	35	10\\
39	36	10\\
39	37	10\\
39	38	10\\
39	39	10\\
39	40	100\\
39	41	100\\
39	42	100\\
39	43	100\\
39	44	100\\
39	45	100\\
39	46	100\\
39	47	100\\
39	48	100\\
39	49	100\\
39	50	100\\
39	51	100\\
40	1	0\\
40	2	0\\
40	3	0\\
40	4	0\\
40	5	0\\
40	6	0\\
40	7	0\\
40	8	0\\
40	9	0\\
40	10	0\\
40	11	0\\
40	12	0\\
40	13	0\\
40	14	0\\
40	15	0\\
40	16	0\\
40	17	0\\
40	18	0\\
40	19	0\\
40	20	0\\
40	21	0\\
40	22	0\\
40	23	0\\
40	24	0\\
40	25	0\\
40	26	0\\
40	27	0\\
40	28	0\\
40	29	0\\
40	30	10\\
40	31	10\\
40	32	10\\
40	33	10\\
40	34	10\\
40	35	10\\
40	36	10\\
40	37	10\\
40	38	10\\
40	39	10\\
40	40	10\\
40	41	100\\
40	42	100\\
40	43	100\\
40	44	100\\
40	45	100\\
40	46	100\\
40	47	100\\
40	48	100\\
40	49	100\\
40	50	100\\
40	51	100\\
41	1	0\\
41	2	0\\
41	3	0\\
41	4	0\\
41	5	0\\
41	6	0\\
41	7	0\\
41	8	0\\
41	9	0\\
41	10	0\\
41	11	0\\
41	12	0\\
41	13	0\\
41	14	0\\
41	15	0\\
41	16	0\\
41	17	0\\
41	18	0\\
41	19	0\\
41	20	0\\
41	21	0\\
41	22	0\\
41	23	0\\
41	24	0\\
41	25	0\\
41	26	0\\
41	27	0\\
41	28	0\\
41	29	0\\
41	30	10\\
41	31	10\\
41	32	10\\
41	33	10\\
41	34	10\\
41	35	10\\
41	36	10\\
41	37	10\\
41	38	10\\
41	39	10\\
41	40	10\\
41	41	100\\
41	42	100\\
41	43	100\\
41	44	100\\
41	45	100\\
41	46	100\\
41	47	100\\
41	48	100\\
41	49	100\\
41	50	100\\
41	51	100\\
42	1	0\\
42	2	0\\
42	3	0\\
42	4	0\\
42	5	0\\
42	6	0\\
42	7	0\\
42	8	0\\
42	9	0\\
42	10	0\\
42	11	0\\
42	12	0\\
42	13	0\\
42	14	0\\
42	15	0\\
42	16	0\\
42	17	0\\
42	18	0\\
42	19	0\\
42	20	0\\
42	21	0\\
42	22	0\\
42	23	0\\
42	24	0\\
42	25	0\\
42	26	0\\
42	27	0\\
42	28	0\\
42	29	0\\
42	30	10\\
42	31	10\\
42	32	10\\
42	33	10\\
42	34	10\\
42	35	10\\
42	36	10\\
42	37	10\\
42	38	10\\
42	39	10\\
42	40	10\\
42	41	100\\
42	42	100\\
42	43	100\\
42	44	100\\
42	45	100\\
42	46	100\\
42	47	100\\
42	48	100\\
42	49	100\\
42	50	100\\
42	51	100\\
43	1	0\\
43	2	0\\
43	3	0\\
43	4	0\\
43	5	0\\
43	6	0\\
43	7	0\\
43	8	0\\
43	9	0\\
43	10	0\\
43	11	0\\
43	12	0\\
43	13	0\\
43	14	0\\
43	15	0\\
43	16	0\\
43	17	0\\
43	18	0\\
43	19	0\\
43	20	0\\
43	21	0\\
43	22	0\\
43	23	0\\
43	24	0\\
43	25	0\\
43	26	0\\
43	27	0\\
43	28	0\\
43	29	0\\
43	30	10\\
43	31	10\\
43	32	10\\
43	33	10\\
43	34	10\\
43	35	10\\
43	36	10\\
43	37	10\\
43	38	10\\
43	39	10\\
43	40	10\\
43	41	100\\
43	42	100\\
43	43	100\\
43	44	100\\
43	45	100\\
43	46	100\\
43	47	100\\
43	48	100\\
43	49	100\\
43	50	100\\
43	51	100\\
44	1	0\\
44	2	0\\
44	3	0\\
44	4	0\\
44	5	0\\
44	6	0\\
44	7	0\\
44	8	0\\
44	9	0\\
44	10	0\\
44	11	0\\
44	12	0\\
44	13	0\\
44	14	0\\
44	15	0\\
44	16	0\\
44	17	0\\
44	18	0\\
44	19	0\\
44	20	0\\
44	21	0\\
44	22	0\\
44	23	0\\
44	24	0\\
44	25	0\\
44	26	0\\
44	27	0\\
44	28	0\\
44	29	0\\
44	30	10\\
44	31	10\\
44	32	10\\
44	33	10\\
44	34	10\\
44	35	10\\
44	36	10\\
44	37	10\\
44	38	10\\
44	39	10\\
44	40	10\\
44	41	100\\
44	42	100\\
44	43	100\\
44	44	100\\
44	45	100\\
44	46	100\\
44	47	100\\
44	48	100\\
44	49	100\\
44	50	100\\
44	51	100\\
45	1	0\\
45	2	0\\
45	3	0\\
45	4	0\\
45	5	0\\
45	6	0\\
45	7	0\\
45	8	0\\
45	9	0\\
45	10	0\\
45	11	0\\
45	12	0\\
45	13	0\\
45	14	0\\
45	15	0\\
45	16	0\\
45	17	0\\
45	18	0\\
45	19	0\\
45	20	0\\
45	21	0\\
45	22	0\\
45	23	0\\
45	24	0\\
45	25	0\\
45	26	0\\
45	27	0\\
45	28	0\\
45	29	0\\
45	30	10\\
45	31	10\\
45	32	10\\
45	33	10\\
45	34	10\\
45	35	10\\
45	36	10\\
45	37	10\\
45	38	10\\
45	39	10\\
45	40	10\\
45	41	100\\
45	42	100\\
45	43	100\\
45	44	100\\
45	45	100\\
45	46	100\\
45	47	100\\
45	48	100\\
45	49	100\\
45	50	100\\
45	51	100\\
46	1	0\\
46	2	0\\
46	3	0\\
46	4	0\\
46	5	0\\
46	6	0\\
46	7	0\\
46	8	0\\
46	9	0\\
46	10	0\\
46	11	0\\
46	12	0\\
46	13	0\\
46	14	0\\
46	15	0\\
46	16	0\\
46	17	0\\
46	18	0\\
46	19	0\\
46	20	0\\
46	21	0\\
46	22	0\\
46	23	0\\
46	24	0\\
46	25	0\\
46	26	0\\
46	27	0\\
46	28	0\\
46	29	0\\
46	30	10\\
46	31	10\\
46	32	10\\
46	33	10\\
46	34	10\\
46	35	10\\
46	36	10\\
46	37	10\\
46	38	10\\
46	39	10\\
46	40	10\\
46	41	10\\
46	42	100\\
46	43	100\\
46	44	100\\
46	45	100\\
46	46	100\\
46	47	100\\
46	48	100\\
46	49	100\\
46	50	100\\
46	51	100\\
47	1	0\\
47	2	0\\
47	3	0\\
47	4	0\\
47	5	0\\
47	6	0\\
47	7	0\\
47	8	0\\
47	9	0\\
47	10	0\\
47	11	0\\
47	12	0\\
47	13	0\\
47	14	0\\
47	15	0\\
47	16	0\\
47	17	0\\
47	18	0\\
47	19	0\\
47	20	0\\
47	21	0\\
47	22	0\\
47	23	0\\
47	24	0\\
47	25	0\\
47	26	0\\
47	27	0\\
47	28	0\\
47	29	0\\
47	30	10\\
47	31	10\\
47	32	10\\
47	33	10\\
47	34	10\\
47	35	10\\
47	36	10\\
47	37	10\\
47	38	10\\
47	39	10\\
47	40	10\\
47	41	10\\
47	42	100\\
47	43	100\\
47	44	100\\
47	45	100\\
47	46	100\\
47	47	100\\
47	48	100\\
47	49	100\\
47	50	100\\
47	51	100\\
48	1	0\\
48	2	0\\
48	3	0\\
48	4	0\\
48	5	0\\
48	6	0\\
48	7	0\\
48	8	0\\
48	9	0\\
48	10	0\\
48	11	0\\
48	12	0\\
48	13	0\\
48	14	0\\
48	15	0\\
48	16	0\\
48	17	0\\
48	18	0\\
48	19	0\\
48	20	0\\
48	21	0\\
48	22	0\\
48	23	0\\
48	24	0\\
48	25	0\\
48	26	0\\
48	27	0\\
48	28	0\\
48	29	0\\
48	30	10\\
48	31	10\\
48	32	10\\
48	33	10\\
48	34	10\\
48	35	10\\
48	36	10\\
48	37	10\\
48	38	10\\
48	39	10\\
48	40	10\\
48	41	10\\
48	42	100\\
48	43	100\\
48	44	100\\
48	45	100\\
48	46	100\\
48	47	100\\
48	48	100\\
48	49	100\\
48	50	100\\
48	51	100\\
49	1	0\\
49	2	0\\
49	3	0\\
49	4	0\\
49	5	0\\
49	6	0\\
49	7	0\\
49	8	0\\
49	9	0\\
49	10	0\\
49	11	0\\
49	12	0\\
49	13	0\\
49	14	0\\
49	15	0\\
49	16	0\\
49	17	0\\
49	18	0\\
49	19	0\\
49	20	0\\
49	21	0\\
49	22	0\\
49	23	0\\
49	24	0\\
49	25	0\\
49	26	0\\
49	27	0\\
49	28	0\\
49	29	0\\
49	30	10\\
49	31	10\\
49	32	10\\
49	33	10\\
49	34	10\\
49	35	10\\
49	36	10\\
49	37	10\\
49	38	10\\
49	39	10\\
49	40	10\\
49	41	10\\
49	42	100\\
49	43	100\\
49	44	100\\
49	45	100\\
49	46	100\\
49	47	100\\
49	48	100\\
49	49	100\\
49	50	100\\
49	51	100\\
50	1	0\\
50	2	0\\
50	3	0\\
50	4	0\\
50	5	0\\
50	6	0\\
50	7	0\\
50	8	0\\
50	9	0\\
50	10	0\\
50	11	0\\
50	12	0\\
50	13	0\\
50	14	0\\
50	15	0\\
50	16	0\\
50	17	0\\
50	18	0\\
50	19	0\\
50	20	0\\
50	21	0\\
50	22	0\\
50	23	0\\
50	24	0\\
50	25	0\\
50	26	0\\
50	27	0\\
50	28	0\\
50	29	0\\
50	30	10\\
50	31	10\\
50	32	10\\
50	33	10\\
50	34	10\\
50	35	10\\
50	36	10\\
50	37	10\\
50	38	10\\
50	39	10\\
50	40	10\\
50	41	10\\
50	42	100\\
50	43	100\\
50	44	100\\
50	45	100\\
50	46	100\\
50	47	100\\
50	48	100\\
50	49	100\\
50	50	100\\
50	51	100\\
51	1	0\\
51	2	0\\
51	3	0\\
51	4	0\\
51	5	0\\
51	6	0\\
51	7	0\\
51	8	0\\
51	9	0\\
51	10	0\\
51	11	0\\
51	12	0\\
51	13	0\\
51	14	0\\
51	15	0\\
51	16	0\\
51	17	0\\
51	18	0\\
51	19	0\\
51	20	0\\
51	21	0\\
51	22	0\\
51	23	0\\
51	24	0\\
51	25	0\\
51	26	0\\
51	27	0\\
51	28	0\\
51	29	0\\
51	30	10\\
51	31	10\\
51	32	10\\
51	33	10\\
51	34	10\\
51	35	10\\
51	36	10\\
51	37	10\\
51	38	10\\
51	39	10\\
51	40	10\\
51	41	10\\
51	42	100\\
51	43	100\\
51	44	100\\
51	45	100\\
51	46	100\\
51	47	100\\
51	48	100\\
51	49	100\\
51	50	100\\
51	51	100\\
};
\end{axis}
\end{tikzpicture}%

%% file: pics/pictureExperimentConvergencyfullydec3col.tex
%
%
\definecolor{mycolor1}{rgb}{0.12941,0.12941,0.12941}%
\begin{tikzpicture}

\begin{axis}[%
width=4.0in,
height=2.0in,
scale only axis,
xmin=1,
xmax=51,
xlabel style={font=\color{mycolor1}},
xlabel={$\ccc_0$},
xtick={5, 15, 25, 35, 45},
xticklabels={$1$, $2$, $3$, $4$, $5$},   
ymin=1,
ymax=45,
ytick={7.815, 15.625, 23.4375, 31.25, 39.0625, 46.875},
yticklabels={0.1, 0.2, 0.3, 0.4, 0.5, 0.6}, 
ylabel style={font=\color{mycolor1}},
ylabel={$\alpha=\beta$},
axis background/.style={fill=white},
colormap={custom}{color(0)=(color2) color(10)=(yellow) color(100)=(red)},
]

\addplot[%
surf,
shader=flat corner, draw=mycolor1, 
mesh/rows=51]
table[row sep=crcr, point meta=\thisrow{c}] {%
x	y	c\\
1	1	10\\
1	2	10\\
1	3	10\\
1	4	10\\
1	5	10\\
1	6	10\\
1	7	10\\
1	8	10\\
1	9	10\\
1	10	10\\
1	11	10\\
1	12	10\\
1	13	10\\
1	14	10\\
1	15	10\\
1	16	10\\
1	17	10\\
1	18	10\\
1	19	10\\
1	20	10\\
1	21	10\\
1	22	100\\
1	23	100\\
1	24	100\\
1	25	100\\
1	26	100\\
1	27	100\\
1	28	100\\
1	29	100\\
1	30	100\\
1	31	100\\
1	32	100\\
1	33	100\\
1	34	100\\
1	35	100\\
1	36	100\\
1	37	100\\
1	38	100\\
1	39	100\\
1	40	100\\
1	41	100\\
1	42	100\\
1	43	100\\
1	44	100\\
1	45	100\\
1	46	100\\
1	47	100\\
1	48	100\\
1	49	100\\
1	50	100\\
1	51	100\\
2	1	0\\
2	2	0\\
2	3	0\\
2	4	0\\
2	5	0\\
2	6	0\\
2	7	10\\
2	8	10\\
2	9	10\\
2	10	10\\
2	11	10\\
2	12	10\\
2	13	10\\
2	14	10\\
2	15	10\\
2	16	10\\
2	17	10\\
2	18	10\\
2	19	10\\
2	20	10\\
2	21	10\\
2	22	10\\
2	23	100\\
2	24	100\\
2	25	100\\
2	26	100\\
2	27	100\\
2	28	100\\
2	29	100\\
2	30	100\\
2	31	100\\
2	32	100\\
2	33	100\\
2	34	100\\
2	35	100\\
2	36	100\\
2	37	100\\
2	38	100\\
2	39	100\\
2	40	100\\
2	41	100\\
2	42	100\\
2	43	100\\
2	44	100\\
2	45	100\\
2	46	100\\
2	47	100\\
2	48	100\\
2	49	100\\
2	50	100\\
2	51	100\\
3	1	0\\
3	2	0\\
3	3	0\\
3	4	0\\
3	5	0\\
3	6	0\\
3	7	0\\
3	8	0\\
3	9	0\\
3	10	10\\
3	11	10\\
3	12	10\\
3	13	10\\
3	14	10\\
3	15	10\\
3	16	10\\
3	17	10\\
3	18	10\\
3	19	10\\
3	20	10\\
3	21	10\\
3	22	10\\
3	23	10\\
3	24	10\\
3	25	100\\
3	26	100\\
3	27	100\\
3	28	100\\
3	29	100\\
3	30	100\\
3	31	100\\
3	32	100\\
3	33	100\\
3	34	100\\
3	35	100\\
3	36	100\\
3	37	100\\
3	38	100\\
3	39	100\\
3	40	100\\
3	41	100\\
3	42	100\\
3	43	100\\
3	44	100\\
3	45	100\\
3	46	100\\
3	47	100\\
3	48	100\\
3	49	100\\
3	50	100\\
3	51	100\\
4	1	0\\
4	2	0\\
4	3	0\\
4	4	0\\
4	5	0\\
4	6	0\\
4	7	0\\
4	8	0\\
4	9	0\\
4	10	0\\
4	11	0\\
4	12	10\\
4	13	10\\
4	14	10\\
4	15	10\\
4	16	10\\
4	17	10\\
4	18	10\\
4	19	10\\
4	20	10\\
4	21	10\\
4	22	10\\
4	23	10\\
4	24	10\\
4	25	10\\
4	26	100\\
4	27	100\\
4	28	100\\
4	29	100\\
4	30	100\\
4	31	100\\
4	32	100\\
4	33	100\\
4	34	100\\
4	35	100\\
4	36	100\\
4	37	100\\
4	38	100\\
4	39	100\\
4	40	100\\
4	41	100\\
4	42	100\\
4	43	100\\
4	44	100\\
4	45	100\\
4	46	100\\
4	47	100\\
4	48	100\\
4	49	100\\
4	50	100\\
4	51	100\\
5	1	0\\
5	2	0\\
5	3	0\\
5	4	0\\
5	5	0\\
5	6	0\\
5	7	0\\
5	8	0\\
5	9	0\\
5	10	0\\
5	11	0\\
5	12	0\\
5	13	10\\
5	14	10\\
5	15	10\\
5	16	10\\
5	17	10\\
5	18	10\\
5	19	10\\
5	20	10\\
5	21	10\\
5	22	10\\
5	23	10\\
5	24	10\\
5	25	10\\
5	26	10\\
5	27	100\\
5	28	100\\
5	29	100\\
5	30	100\\
5	31	100\\
5	32	100\\
5	33	100\\
5	34	100\\
5	35	100\\
5	36	100\\
5	37	100\\
5	38	100\\
5	39	100\\
5	40	100\\
5	41	100\\
5	42	100\\
5	43	100\\
5	44	100\\
5	45	100\\
5	46	100\\
5	47	100\\
5	48	100\\
5	49	100\\
5	50	100\\
5	51	100\\
6	1	0\\
6	2	0\\
6	3	0\\
6	4	0\\
6	5	0\\
6	6	0\\
6	7	0\\
6	8	0\\
6	9	0\\
6	10	0\\
6	11	0\\
6	12	0\\
6	13	0\\
6	14	0\\
6	15	10\\
6	16	10\\
6	17	10\\
6	18	10\\
6	19	10\\
6	20	10\\
6	21	10\\
6	22	10\\
6	23	10\\
6	24	10\\
6	25	10\\
6	26	10\\
6	27	10\\
6	28	100\\
6	29	100\\
6	30	100\\
6	31	100\\
6	32	100\\
6	33	100\\
6	34	100\\
6	35	100\\
6	36	100\\
6	37	100\\
6	38	100\\
6	39	100\\
6	40	100\\
6	41	100\\
6	42	100\\
6	43	100\\
6	44	100\\
6	45	100\\
6	46	100\\
6	47	100\\
6	48	100\\
6	49	100\\
6	50	100\\
6	51	100\\
7	1	0\\
7	2	0\\
7	3	0\\
7	4	0\\
7	5	0\\
7	6	0\\
7	7	0\\
7	8	0\\
7	9	0\\
7	10	0\\
7	11	0\\
7	12	0\\
7	13	0\\
7	14	0\\
7	15	0\\
7	16	10\\
7	17	10\\
7	18	10\\
7	19	10\\
7	20	10\\
7	21	10\\
7	22	10\\
7	23	10\\
7	24	10\\
7	25	10\\
7	26	10\\
7	27	10\\
7	28	10\\
7	29	100\\
7	30	100\\
7	31	100\\
7	32	100\\
7	33	100\\
7	34	100\\
7	35	100\\
7	36	100\\
7	37	100\\
7	38	100\\
7	39	100\\
7	40	100\\
7	41	100\\
7	42	100\\
7	43	100\\
7	44	100\\
7	45	100\\
7	46	100\\
7	47	100\\
7	48	100\\
7	49	100\\
7	50	100\\
7	51	100\\
8	1	0\\
8	2	0\\
8	3	0\\
8	4	0\\
8	5	0\\
8	6	0\\
8	7	0\\
8	8	0\\
8	9	0\\
8	10	0\\
8	11	0\\
8	12	0\\
8	13	0\\
8	14	0\\
8	15	0\\
8	16	0\\
8	17	10\\
8	18	10\\
8	19	10\\
8	20	10\\
8	21	10\\
8	22	10\\
8	23	10\\
8	24	10\\
8	25	10\\
8	26	10\\
8	27	10\\
8	28	10\\
8	29	10\\
8	30	100\\
8	31	100\\
8	32	100\\
8	33	100\\
8	34	100\\
8	35	100\\
8	36	100\\
8	37	100\\
8	38	100\\
8	39	100\\
8	40	100\\
8	41	100\\
8	42	100\\
8	43	100\\
8	44	100\\
8	45	100\\
8	46	100\\
8	47	100\\
8	48	100\\
8	49	100\\
8	50	100\\
8	51	100\\
9	1	0\\
9	2	0\\
9	3	0\\
9	4	0\\
9	5	0\\
9	6	0\\
9	7	0\\
9	8	0\\
9	9	0\\
9	10	0\\
9	11	0\\
9	12	0\\
9	13	0\\
9	14	0\\
9	15	0\\
9	16	0\\
9	17	0\\
9	18	10\\
9	19	10\\
9	20	10\\
9	21	10\\
9	22	10\\
9	23	10\\
9	24	10\\
9	25	10\\
9	26	10\\
9	27	10\\
9	28	10\\
9	29	10\\
9	30	100\\
9	31	100\\
9	32	100\\
9	33	100\\
9	34	100\\
9	35	100\\
9	36	100\\
9	37	100\\
9	38	100\\
9	39	100\\
9	40	100\\
9	41	100\\
9	42	100\\
9	43	100\\
9	44	100\\
9	45	100\\
9	46	100\\
9	47	100\\
9	48	100\\
9	49	100\\
9	50	100\\
9	51	100\\
10	1	0\\
10	2	0\\
10	3	0\\
10	4	0\\
10	5	0\\
10	6	0\\
10	7	0\\
10	8	0\\
10	9	0\\
10	10	0\\
10	11	0\\
10	12	0\\
10	13	0\\
10	14	0\\
10	15	0\\
10	16	0\\
10	17	0\\
10	18	0\\
10	19	10\\
10	20	10\\
10	21	10\\
10	22	10\\
10	23	10\\
10	24	10\\
10	25	10\\
10	26	10\\
10	27	10\\
10	28	10\\
10	29	10\\
10	30	10\\
10	31	100\\
10	32	100\\
10	33	100\\
10	34	100\\
10	35	100\\
10	36	100\\
10	37	100\\
10	38	100\\
10	39	100\\
10	40	100\\
10	41	100\\
10	42	100\\
10	43	100\\
10	44	100\\
10	45	100\\
10	46	100\\
10	47	100\\
10	48	100\\
10	49	100\\
10	50	100\\
10	51	100\\
11	1	0\\
11	2	0\\
11	3	0\\
11	4	0\\
11	5	0\\
11	6	0\\
11	7	0\\
11	8	0\\
11	9	0\\
11	10	0\\
11	11	0\\
11	12	0\\
11	13	0\\
11	14	0\\
11	15	0\\
11	16	0\\
11	17	0\\
11	18	0\\
11	19	0\\
11	20	10\\
11	21	10\\
11	22	10\\
11	23	10\\
11	24	10\\
11	25	10\\
11	26	10\\
11	27	10\\
11	28	10\\
11	29	10\\
11	30	10\\
11	31	100\\
11	32	100\\
11	33	100\\
11	34	100\\
11	35	100\\
11	36	100\\
11	37	100\\
11	38	100\\
11	39	100\\
11	40	100\\
11	41	100\\
11	42	100\\
11	43	100\\
11	44	100\\
11	45	100\\
11	46	100\\
11	47	100\\
11	48	100\\
11	49	100\\
11	50	100\\
11	51	100\\
12	1	0\\
12	2	0\\
12	3	0\\
12	4	0\\
12	5	0\\
12	6	0\\
12	7	0\\
12	8	0\\
12	9	0\\
12	10	0\\
12	11	0\\
12	12	0\\
12	13	0\\
12	14	0\\
12	15	0\\
12	16	0\\
12	17	0\\
12	18	0\\
12	19	0\\
12	20	0\\
12	21	10\\
12	22	10\\
12	23	10\\
12	24	10\\
12	25	10\\
12	26	10\\
12	27	10\\
12	28	10\\
12	29	10\\
12	30	10\\
12	31	10\\
12	32	100\\
12	33	100\\
12	34	100\\
12	35	100\\
12	36	100\\
12	37	100\\
12	38	100\\
12	39	100\\
12	40	100\\
12	41	100\\
12	42	100\\
12	43	100\\
12	44	100\\
12	45	100\\
12	46	100\\
12	47	100\\
12	48	100\\
12	49	100\\
12	50	100\\
12	51	100\\
13	1	0\\
13	2	0\\
13	3	0\\
13	4	0\\
13	5	0\\
13	6	0\\
13	7	0\\
13	8	0\\
13	9	0\\
13	10	0\\
13	11	0\\
13	12	0\\
13	13	0\\
13	14	0\\
13	15	0\\
13	16	0\\
13	17	0\\
13	18	0\\
13	19	0\\
13	20	0\\
13	21	0\\
13	22	10\\
13	23	10\\
13	24	10\\
13	25	10\\
13	26	10\\
13	27	10\\
13	28	10\\
13	29	10\\
13	30	10\\
13	31	10\\
13	32	100\\
13	33	100\\
13	34	100\\
13	35	100\\
13	36	100\\
13	37	100\\
13	38	100\\
13	39	100\\
13	40	100\\
13	41	100\\
13	42	100\\
13	43	100\\
13	44	100\\
13	45	100\\
13	46	100\\
13	47	100\\
13	48	100\\
13	49	100\\
13	50	100\\
13	51	100\\
14	1	0\\
14	2	0\\
14	3	0\\
14	4	0\\
14	5	0\\
14	6	0\\
14	7	0\\
14	8	0\\
14	9	0\\
14	10	0\\
14	11	0\\
14	12	0\\
14	13	0\\
14	14	0\\
14	15	0\\
14	16	0\\
14	17	0\\
14	18	0\\
14	19	0\\
14	20	0\\
14	21	0\\
14	22	10\\
14	23	10\\
14	24	10\\
14	25	10\\
14	26	10\\
14	27	10\\
14	28	10\\
14	29	10\\
14	30	10\\
14	31	10\\
14	32	10\\
14	33	100\\
14	34	100\\
14	35	100\\
14	36	100\\
14	37	100\\
14	38	100\\
14	39	100\\
14	40	100\\
14	41	100\\
14	42	100\\
14	43	100\\
14	44	100\\
14	45	100\\
14	46	100\\
14	47	100\\
14	48	100\\
14	49	100\\
14	50	100\\
14	51	100\\
15	1	0\\
15	2	0\\
15	3	0\\
15	4	0\\
15	5	0\\
15	6	0\\
15	7	0\\
15	8	0\\
15	9	0\\
15	10	0\\
15	11	0\\
15	12	0\\
15	13	0\\
15	14	0\\
15	15	0\\
15	16	0\\
15	17	0\\
15	18	0\\
15	19	0\\
15	20	0\\
15	21	0\\
15	22	0\\
15	23	10\\
15	24	10\\
15	25	10\\
15	26	10\\
15	27	10\\
15	28	10\\
15	29	10\\
15	30	10\\
15	31	10\\
15	32	10\\
15	33	100\\
15	34	100\\
15	35	100\\
15	36	100\\
15	37	100\\
15	38	100\\
15	39	100\\
15	40	100\\
15	41	100\\
15	42	100\\
15	43	100\\
15	44	100\\
15	45	100\\
15	46	100\\
15	47	100\\
15	48	100\\
15	49	100\\
15	50	100\\
15	51	100\\
16	1	0\\
16	2	0\\
16	3	0\\
16	4	0\\
16	5	0\\
16	6	0\\
16	7	0\\
16	8	0\\
16	9	0\\
16	10	0\\
16	11	0\\
16	12	0\\
16	13	0\\
16	14	0\\
16	15	0\\
16	16	0\\
16	17	0\\
16	18	0\\
16	19	0\\
16	20	0\\
16	21	0\\
16	22	0\\
16	23	0\\
16	24	10\\
16	25	10\\
16	26	10\\
16	27	10\\
16	28	10\\
16	29	10\\
16	30	10\\
16	31	10\\
16	32	10\\
16	33	100\\
16	34	100\\
16	35	100\\
16	36	100\\
16	37	100\\
16	38	100\\
16	39	100\\
16	40	100\\
16	41	100\\
16	42	100\\
16	43	100\\
16	44	100\\
16	45	100\\
16	46	100\\
16	47	100\\
16	48	100\\
16	49	100\\
16	50	100\\
16	51	100\\
17	1	0\\
17	2	0\\
17	3	0\\
17	4	0\\
17	5	0\\
17	6	0\\
17	7	0\\
17	8	0\\
17	9	0\\
17	10	0\\
17	11	0\\
17	12	0\\
17	13	0\\
17	14	0\\
17	15	0\\
17	16	0\\
17	17	0\\
17	18	0\\
17	19	0\\
17	20	0\\
17	21	0\\
17	22	0\\
17	23	0\\
17	24	10\\
17	25	10\\
17	26	10\\
17	27	10\\
17	28	10\\
17	29	10\\
17	30	10\\
17	31	10\\
17	32	10\\
17	33	10\\
17	34	100\\
17	35	100\\
17	36	100\\
17	37	100\\
17	38	100\\
17	39	100\\
17	40	100\\
17	41	100\\
17	42	100\\
17	43	100\\
17	44	100\\
17	45	100\\
17	46	100\\
17	47	100\\
17	48	100\\
17	49	100\\
17	50	100\\
17	51	100\\
18	1	0\\
18	2	0\\
18	3	0\\
18	4	0\\
18	5	0\\
18	6	0\\
18	7	0\\
18	8	0\\
18	9	0\\
18	10	0\\
18	11	0\\
18	12	0\\
18	13	0\\
18	14	0\\
18	15	0\\
18	16	0\\
18	17	0\\
18	18	0\\
18	19	0\\
18	20	0\\
18	21	0\\
18	22	0\\
18	23	0\\
18	24	10\\
18	25	10\\
18	26	10\\
18	27	10\\
18	28	10\\
18	29	10\\
18	30	10\\
18	31	10\\
18	32	10\\
18	33	10\\
18	34	100\\
18	35	100\\
18	36	100\\
18	37	100\\
18	38	100\\
18	39	100\\
18	40	100\\
18	41	100\\
18	42	100\\
18	43	100\\
18	44	100\\
18	45	100\\
18	46	100\\
18	47	100\\
18	48	100\\
18	49	100\\
18	50	100\\
18	51	100\\
19	1	0\\
19	2	0\\
19	3	0\\
19	4	0\\
19	5	0\\
19	6	0\\
19	7	0\\
19	8	0\\
19	9	0\\
19	10	0\\
19	11	0\\
19	12	0\\
19	13	0\\
19	14	0\\
19	15	0\\
19	16	0\\
19	17	0\\
19	18	0\\
19	19	0\\
19	20	0\\
19	21	0\\
19	22	0\\
19	23	0\\
19	24	10\\
19	25	10\\
19	26	10\\
19	27	10\\
19	28	10\\
19	29	10\\
19	30	10\\
19	31	10\\
19	32	10\\
19	33	10\\
19	34	100\\
19	35	100\\
19	36	100\\
19	37	100\\
19	38	100\\
19	39	100\\
19	40	100\\
19	41	100\\
19	42	100\\
19	43	100\\
19	44	100\\
19	45	100\\
19	46	100\\
19	47	100\\
19	48	100\\
19	49	100\\
19	50	100\\
19	51	100\\
20	1	0\\
20	2	0\\
20	3	0\\
20	4	0\\
20	5	0\\
20	6	0\\
20	7	0\\
20	8	0\\
20	9	0\\
20	10	0\\
20	11	0\\
20	12	0\\
20	13	0\\
20	14	0\\
20	15	0\\
20	16	0\\
20	17	0\\
20	18	0\\
20	19	0\\
20	20	0\\
20	21	0\\
20	22	0\\
20	23	0\\
20	24	10\\
20	25	10\\
20	26	10\\
20	27	10\\
20	28	10\\
20	29	10\\
20	30	10\\
20	31	10\\
20	32	10\\
20	33	10\\
20	34	100\\
20	35	100\\
20	36	100\\
20	37	100\\
20	38	100\\
20	39	100\\
20	40	100\\
20	41	100\\
20	42	100\\
20	43	100\\
20	44	100\\
20	45	100\\
20	46	100\\
20	47	100\\
20	48	100\\
20	49	100\\
20	50	100\\
20	51	100\\
21	1	0\\
21	2	0\\
21	3	0\\
21	4	0\\
21	5	0\\
21	6	0\\
21	7	0\\
21	8	0\\
21	9	0\\
21	10	0\\
21	11	0\\
21	12	0\\
21	13	0\\
21	14	0\\
21	15	0\\
21	16	0\\
21	17	0\\
21	18	0\\
21	19	0\\
21	20	0\\
21	21	0\\
21	22	0\\
21	23	0\\
21	24	10\\
21	25	10\\
21	26	10\\
21	27	10\\
21	28	10\\
21	29	10\\
21	30	10\\
21	31	10\\
21	32	10\\
21	33	10\\
21	34	10\\
21	35	100\\
21	36	100\\
21	37	100\\
21	38	100\\
21	39	100\\
21	40	100\\
21	41	100\\
21	42	100\\
21	43	100\\
21	44	100\\
21	45	100\\
21	46	100\\
21	47	100\\
21	48	100\\
21	49	100\\
21	50	100\\
21	51	100\\
22	1	0\\
22	2	0\\
22	3	0\\
22	4	0\\
22	5	0\\
22	6	0\\
22	7	0\\
22	8	0\\
22	9	0\\
22	10	0\\
22	11	0\\
22	12	0\\
22	13	0\\
22	14	0\\
22	15	0\\
22	16	0\\
22	17	0\\
22	18	0\\
22	19	0\\
22	20	0\\
22	21	0\\
22	22	0\\
22	23	0\\
22	24	10\\
22	25	10\\
22	26	10\\
22	27	10\\
22	28	10\\
22	29	10\\
22	30	10\\
22	31	10\\
22	32	10\\
22	33	10\\
22	34	10\\
22	35	100\\
22	36	100\\
22	37	100\\
22	38	100\\
22	39	100\\
22	40	100\\
22	41	100\\
22	42	100\\
22	43	100\\
22	44	100\\
22	45	100\\
22	46	100\\
22	47	100\\
22	48	100\\
22	49	100\\
22	50	100\\
22	51	100\\
23	1	0\\
23	2	0\\
23	3	0\\
23	4	0\\
23	5	0\\
23	6	0\\
23	7	0\\
23	8	0\\
23	9	0\\
23	10	0\\
23	11	0\\
23	12	0\\
23	13	0\\
23	14	0\\
23	15	0\\
23	16	0\\
23	17	0\\
23	18	0\\
23	19	0\\
23	20	0\\
23	21	0\\
23	22	0\\
23	23	0\\
23	24	10\\
23	25	10\\
23	26	10\\
23	27	10\\
23	28	10\\
23	29	10\\
23	30	10\\
23	31	10\\
23	32	10\\
23	33	10\\
23	34	10\\
23	35	100\\
23	36	100\\
23	37	100\\
23	38	100\\
23	39	100\\
23	40	100\\
23	41	100\\
23	42	100\\
23	43	100\\
23	44	100\\
23	45	100\\
23	46	100\\
23	47	100\\
23	48	100\\
23	49	100\\
23	50	100\\
23	51	100\\
24	1	0\\
24	2	0\\
24	3	0\\
24	4	0\\
24	5	0\\
24	6	0\\
24	7	0\\
24	8	0\\
24	9	0\\
24	10	0\\
24	11	0\\
24	12	0\\
24	13	0\\
24	14	0\\
24	15	0\\
24	16	0\\
24	17	0\\
24	18	0\\
24	19	0\\
24	20	0\\
24	21	0\\
24	22	0\\
24	23	0\\
24	24	10\\
24	25	10\\
24	26	10\\
24	27	10\\
24	28	10\\
24	29	10\\
24	30	10\\
24	31	10\\
24	32	10\\
24	33	10\\
24	34	10\\
24	35	100\\
24	36	100\\
24	37	100\\
24	38	100\\
24	39	100\\
24	40	100\\
24	41	100\\
24	42	100\\
24	43	100\\
24	44	100\\
24	45	100\\
24	46	100\\
24	47	100\\
24	48	100\\
24	49	100\\
24	50	100\\
24	51	100\\
25	1	0\\
25	2	0\\
25	3	0\\
25	4	0\\
25	5	0\\
25	6	0\\
25	7	0\\
25	8	0\\
25	9	0\\
25	10	0\\
25	11	0\\
25	12	0\\
25	13	0\\
25	14	0\\
25	15	0\\
25	16	0\\
25	17	0\\
25	18	0\\
25	19	0\\
25	20	0\\
25	21	0\\
25	22	0\\
25	23	0\\
25	24	10\\
25	25	10\\
25	26	10\\
25	27	10\\
25	28	10\\
25	29	10\\
25	30	10\\
25	31	10\\
25	32	10\\
25	33	10\\
25	34	10\\
25	35	100\\
25	36	100\\
25	37	100\\
25	38	100\\
25	39	100\\
25	40	100\\
25	41	100\\
25	42	100\\
25	43	100\\
25	44	100\\
25	45	100\\
25	46	100\\
25	47	100\\
25	48	100\\
25	49	100\\
25	50	100\\
25	51	100\\
26	1	0\\
26	2	0\\
26	3	0\\
26	4	0\\
26	5	0\\
26	6	0\\
26	7	0\\
26	8	0\\
26	9	0\\
26	10	0\\
26	11	0\\
26	12	0\\
26	13	0\\
26	14	0\\
26	15	0\\
26	16	0\\
26	17	0\\
26	18	0\\
26	19	0\\
26	20	0\\
26	21	0\\
26	22	0\\
26	23	0\\
26	24	10\\
26	25	10\\
26	26	10\\
26	27	10\\
26	28	10\\
26	29	10\\
26	30	10\\
26	31	10\\
26	32	10\\
26	33	10\\
26	34	10\\
26	35	10\\
26	36	100\\
26	37	100\\
26	38	100\\
26	39	100\\
26	40	100\\
26	41	100\\
26	42	100\\
26	43	100\\
26	44	100\\
26	45	100\\
26	46	100\\
26	47	100\\
26	48	100\\
26	49	100\\
26	50	100\\
26	51	100\\
27	1	0\\
27	2	0\\
27	3	0\\
27	4	0\\
27	5	0\\
27	6	0\\
27	7	0\\
27	8	0\\
27	9	0\\
27	10	0\\
27	11	0\\
27	12	0\\
27	13	0\\
27	14	0\\
27	15	0\\
27	16	0\\
27	17	0\\
27	18	0\\
27	19	0\\
27	20	0\\
27	21	0\\
27	22	0\\
27	23	0\\
27	24	10\\
27	25	10\\
27	26	10\\
27	27	10\\
27	28	10\\
27	29	10\\
27	30	10\\
27	31	10\\
27	32	10\\
27	33	10\\
27	34	10\\
27	35	10\\
27	36	100\\
27	37	100\\
27	38	100\\
27	39	100\\
27	40	100\\
27	41	100\\
27	42	100\\
27	43	100\\
27	44	100\\
27	45	100\\
27	46	100\\
27	47	100\\
27	48	100\\
27	49	100\\
27	50	100\\
27	51	100\\
28	1	0\\
28	2	0\\
28	3	0\\
28	4	0\\
28	5	0\\
28	6	0\\
28	7	0\\
28	8	0\\
28	9	0\\
28	10	0\\
28	11	0\\
28	12	0\\
28	13	0\\
28	14	0\\
28	15	0\\
28	16	0\\
28	17	0\\
28	18	0\\
28	19	0\\
28	20	0\\
28	21	0\\
28	22	0\\
28	23	0\\
28	24	10\\
28	25	10\\
28	26	10\\
28	27	10\\
28	28	10\\
28	29	10\\
28	30	10\\
28	31	10\\
28	32	10\\
28	33	10\\
28	34	10\\
28	35	10\\
28	36	100\\
28	37	100\\
28	38	100\\
28	39	100\\
28	40	100\\
28	41	100\\
28	42	100\\
28	43	100\\
28	44	100\\
28	45	100\\
28	46	100\\
28	47	100\\
28	48	100\\
28	49	100\\
28	50	100\\
28	51	100\\
29	1	0\\
29	2	0\\
29	3	0\\
29	4	0\\
29	5	0\\
29	6	0\\
29	7	0\\
29	8	0\\
29	9	0\\
29	10	0\\
29	11	0\\
29	12	0\\
29	13	0\\
29	14	0\\
29	15	0\\
29	16	0\\
29	17	0\\
29	18	0\\
29	19	0\\
29	20	0\\
29	21	0\\
29	22	0\\
29	23	0\\
29	24	10\\
29	25	10\\
29	26	10\\
29	27	10\\
29	28	10\\
29	29	10\\
29	30	10\\
29	31	10\\
29	32	10\\
29	33	10\\
29	34	10\\
29	35	10\\
29	36	100\\
29	37	100\\
29	38	100\\
29	39	100\\
29	40	100\\
29	41	100\\
29	42	100\\
29	43	100\\
29	44	100\\
29	45	100\\
29	46	100\\
29	47	100\\
29	48	100\\
29	49	100\\
29	50	100\\
29	51	100\\
30	1	0\\
30	2	0\\
30	3	0\\
30	4	0\\
30	5	0\\
30	6	0\\
30	7	0\\
30	8	0\\
30	9	0\\
30	10	0\\
30	11	0\\
30	12	0\\
30	13	0\\
30	14	0\\
30	15	0\\
30	16	0\\
30	17	0\\
30	18	0\\
30	19	0\\
30	20	0\\
30	21	0\\
30	22	0\\
30	23	0\\
30	24	10\\
30	25	10\\
30	26	10\\
30	27	10\\
30	28	10\\
30	29	10\\
30	30	10\\
30	31	10\\
30	32	10\\
30	33	10\\
30	34	10\\
30	35	10\\
30	36	100\\
30	37	100\\
30	38	100\\
30	39	100\\
30	40	100\\
30	41	100\\
30	42	100\\
30	43	100\\
30	44	100\\
30	45	100\\
30	46	100\\
30	47	100\\
30	48	100\\
30	49	100\\
30	50	100\\
30	51	100\\
31	1	0\\
31	2	0\\
31	3	0\\
31	4	0\\
31	5	0\\
31	6	0\\
31	7	0\\
31	8	0\\
31	9	0\\
31	10	0\\
31	11	0\\
31	12	0\\
31	13	0\\
31	14	0\\
31	15	0\\
31	16	0\\
31	17	0\\
31	18	0\\
31	19	0\\
31	20	0\\
31	21	0\\
31	22	0\\
31	23	0\\
31	24	10\\
31	25	10\\
31	26	10\\
31	27	10\\
31	28	10\\
31	29	10\\
31	30	10\\
31	31	10\\
31	32	10\\
31	33	10\\
31	34	10\\
31	35	10\\
31	36	100\\
31	37	100\\
31	38	100\\
31	39	100\\
31	40	100\\
31	41	100\\
31	42	100\\
31	43	100\\
31	44	100\\
31	45	100\\
31	46	100\\
31	47	100\\
31	48	100\\
31	49	100\\
31	50	100\\
31	51	100\\
32	1	0\\
32	2	0\\
32	3	0\\
32	4	0\\
32	5	0\\
32	6	0\\
32	7	0\\
32	8	0\\
32	9	0\\
32	10	0\\
32	11	0\\
32	12	0\\
32	13	0\\
32	14	0\\
32	15	0\\
32	16	0\\
32	17	0\\
32	18	0\\
32	19	0\\
32	20	0\\
32	21	0\\
32	22	0\\
32	23	0\\
32	24	10\\
32	25	10\\
32	26	10\\
32	27	10\\
32	28	10\\
32	29	10\\
32	30	10\\
32	31	10\\
32	32	10\\
32	33	10\\
32	34	10\\
32	35	10\\
32	36	100\\
32	37	100\\
32	38	100\\
32	39	100\\
32	40	100\\
32	41	100\\
32	42	100\\
32	43	100\\
32	44	100\\
32	45	100\\
32	46	100\\
32	47	100\\
32	48	100\\
32	49	100\\
32	50	100\\
32	51	100\\
33	1	0\\
33	2	0\\
33	3	0\\
33	4	0\\
33	5	0\\
33	6	0\\
33	7	0\\
33	8	0\\
33	9	0\\
33	10	0\\
33	11	0\\
33	12	0\\
33	13	0\\
33	14	0\\
33	15	0\\
33	16	0\\
33	17	0\\
33	18	0\\
33	19	0\\
33	20	0\\
33	21	0\\
33	22	0\\
33	23	0\\
33	24	10\\
33	25	10\\
33	26	10\\
33	27	10\\
33	28	10\\
33	29	10\\
33	30	10\\
33	31	10\\
33	32	10\\
33	33	10\\
33	34	10\\
33	35	10\\
33	36	100\\
33	37	100\\
33	38	100\\
33	39	100\\
33	40	100\\
33	41	100\\
33	42	100\\
33	43	100\\
33	44	100\\
33	45	100\\
33	46	100\\
33	47	100\\
33	48	100\\
33	49	100\\
33	50	100\\
33	51	100\\
34	1	0\\
34	2	0\\
34	3	0\\
34	4	0\\
34	5	0\\
34	6	0\\
34	7	0\\
34	8	0\\
34	9	0\\
34	10	0\\
34	11	0\\
34	12	0\\
34	13	0\\
34	14	0\\
34	15	0\\
34	16	0\\
34	17	0\\
34	18	0\\
34	19	0\\
34	20	0\\
34	21	0\\
34	22	0\\
34	23	0\\
34	24	10\\
34	25	10\\
34	26	10\\
34	27	10\\
34	28	10\\
34	29	10\\
34	30	10\\
34	31	10\\
34	32	10\\
34	33	10\\
34	34	10\\
34	35	10\\
34	36	10\\
34	37	100\\
34	38	100\\
34	39	100\\
34	40	100\\
34	41	100\\
34	42	100\\
34	43	100\\
34	44	100\\
34	45	100\\
34	46	100\\
34	47	100\\
34	48	100\\
34	49	100\\
34	50	100\\
34	51	100\\
35	1	0\\
35	2	0\\
35	3	0\\
35	4	0\\
35	5	0\\
35	6	0\\
35	7	0\\
35	8	0\\
35	9	0\\
35	10	0\\
35	11	0\\
35	12	0\\
35	13	0\\
35	14	0\\
35	15	0\\
35	16	0\\
35	17	0\\
35	18	0\\
35	19	0\\
35	20	0\\
35	21	0\\
35	22	0\\
35	23	0\\
35	24	10\\
35	25	10\\
35	26	10\\
35	27	10\\
35	28	10\\
35	29	10\\
35	30	10\\
35	31	10\\
35	32	10\\
35	33	10\\
35	34	10\\
35	35	10\\
35	36	10\\
35	37	100\\
35	38	100\\
35	39	100\\
35	40	100\\
35	41	100\\
35	42	100\\
35	43	100\\
35	44	100\\
35	45	100\\
35	46	100\\
35	47	100\\
35	48	100\\
35	49	100\\
35	50	100\\
35	51	100\\
36	1	0\\
36	2	0\\
36	3	0\\
36	4	0\\
36	5	0\\
36	6	0\\
36	7	0\\
36	8	0\\
36	9	0\\
36	10	0\\
36	11	0\\
36	12	0\\
36	13	0\\
36	14	0\\
36	15	0\\
36	16	0\\
36	17	0\\
36	18	0\\
36	19	0\\
36	20	0\\
36	21	0\\
36	22	0\\
36	23	0\\
36	24	10\\
36	25	10\\
36	26	10\\
36	27	10\\
36	28	10\\
36	29	10\\
36	30	10\\
36	31	10\\
36	32	10\\
36	33	10\\
36	34	10\\
36	35	10\\
36	36	10\\
36	37	100\\
36	38	100\\
36	39	100\\
36	40	100\\
36	41	100\\
36	42	100\\
36	43	100\\
36	44	100\\
36	45	100\\
36	46	100\\
36	47	100\\
36	48	100\\
36	49	100\\
36	50	100\\
36	51	100\\
37	1	0\\
37	2	0\\
37	3	0\\
37	4	0\\
37	5	0\\
37	6	0\\
37	7	0\\
37	8	0\\
37	9	0\\
37	10	0\\
37	11	0\\
37	12	0\\
37	13	0\\
37	14	0\\
37	15	0\\
37	16	0\\
37	17	0\\
37	18	0\\
37	19	0\\
37	20	0\\
37	21	0\\
37	22	0\\
37	23	0\\
37	24	10\\
37	25	10\\
37	26	10\\
37	27	10\\
37	28	10\\
37	29	10\\
37	30	10\\
37	31	10\\
37	32	10\\
37	33	10\\
37	34	10\\
37	35	10\\
37	36	10\\
37	37	100\\
37	38	100\\
37	39	100\\
37	40	100\\
37	41	100\\
37	42	100\\
37	43	100\\
37	44	100\\
37	45	100\\
37	46	100\\
37	47	100\\
37	48	100\\
37	49	100\\
37	50	100\\
37	51	100\\
38	1	0\\
38	2	0\\
38	3	0\\
38	4	0\\
38	5	0\\
38	6	0\\
38	7	0\\
38	8	0\\
38	9	0\\
38	10	0\\
38	11	0\\
38	12	0\\
38	13	0\\
38	14	0\\
38	15	0\\
38	16	0\\
38	17	0\\
38	18	0\\
38	19	0\\
38	20	0\\
38	21	0\\
38	22	0\\
38	23	0\\
38	24	10\\
38	25	10\\
38	26	10\\
38	27	10\\
38	28	10\\
38	29	10\\
38	30	10\\
38	31	10\\
38	32	10\\
38	33	10\\
38	34	10\\
38	35	10\\
38	36	10\\
38	37	100\\
38	38	100\\
38	39	100\\
38	40	100\\
38	41	100\\
38	42	100\\
38	43	100\\
38	44	100\\
38	45	100\\
38	46	100\\
38	47	100\\
38	48	100\\
38	49	100\\
38	50	100\\
38	51	100\\
39	1	0\\
39	2	0\\
39	3	0\\
39	4	0\\
39	5	0\\
39	6	0\\
39	7	0\\
39	8	0\\
39	9	0\\
39	10	0\\
39	11	0\\
39	12	0\\
39	13	0\\
39	14	0\\
39	15	0\\
39	16	0\\
39	17	0\\
39	18	0\\
39	19	0\\
39	20	0\\
39	21	0\\
39	22	0\\
39	23	0\\
39	24	10\\
39	25	10\\
39	26	10\\
39	27	10\\
39	28	10\\
39	29	10\\
39	30	10\\
39	31	10\\
39	32	10\\
39	33	10\\
39	34	10\\
39	35	10\\
39	36	10\\
39	37	100\\
39	38	100\\
39	39	100\\
39	40	100\\
39	41	100\\
39	42	100\\
39	43	100\\
39	44	100\\
39	45	100\\
39	46	100\\
39	47	100\\
39	48	100\\
39	49	100\\
39	50	100\\
39	51	100\\
40	1	0\\
40	2	0\\
40	3	0\\
40	4	0\\
40	5	0\\
40	6	0\\
40	7	0\\
40	8	0\\
40	9	0\\
40	10	0\\
40	11	0\\
40	12	0\\
40	13	0\\
40	14	0\\
40	15	0\\
40	16	0\\
40	17	0\\
40	18	0\\
40	19	0\\
40	20	0\\
40	21	0\\
40	22	0\\
40	23	0\\
40	24	10\\
40	25	10\\
40	26	10\\
40	27	10\\
40	28	10\\
40	29	10\\
40	30	10\\
40	31	10\\
40	32	10\\
40	33	10\\
40	34	10\\
40	35	10\\
40	36	10\\
40	37	100\\
40	38	100\\
40	39	100\\
40	40	100\\
40	41	100\\
40	42	100\\
40	43	100\\
40	44	100\\
40	45	100\\
40	46	100\\
40	47	100\\
40	48	100\\
40	49	100\\
40	50	100\\
40	51	100\\
41	1	0\\
41	2	0\\
41	3	0\\
41	4	0\\
41	5	0\\
41	6	0\\
41	7	0\\
41	8	0\\
41	9	0\\
41	10	0\\
41	11	0\\
41	12	0\\
41	13	0\\
41	14	0\\
41	15	0\\
41	16	0\\
41	17	0\\
41	18	0\\
41	19	0\\
41	20	0\\
41	21	0\\
41	22	0\\
41	23	0\\
41	24	10\\
41	25	10\\
41	26	10\\
41	27	10\\
41	28	10\\
41	29	10\\
41	30	10\\
41	31	10\\
41	32	10\\
41	33	10\\
41	34	10\\
41	35	10\\
41	36	10\\
41	37	100\\
41	38	100\\
41	39	100\\
41	40	100\\
41	41	100\\
41	42	100\\
41	43	100\\
41	44	100\\
41	45	100\\
41	46	100\\
41	47	100\\
41	48	100\\
41	49	100\\
41	50	100\\
41	51	100\\
42	1	0\\
42	2	0\\
42	3	0\\
42	4	0\\
42	5	0\\
42	6	0\\
42	7	0\\
42	8	0\\
42	9	0\\
42	10	0\\
42	11	0\\
42	12	0\\
42	13	0\\
42	14	0\\
42	15	0\\
42	16	0\\
42	17	0\\
42	18	0\\
42	19	0\\
42	20	0\\
42	21	0\\
42	22	0\\
42	23	0\\
42	24	10\\
42	25	10\\
42	26	10\\
42	27	10\\
42	28	10\\
42	29	10\\
42	30	10\\
42	31	10\\
42	32	10\\
42	33	10\\
42	34	10\\
42	35	10\\
42	36	10\\
42	37	100\\
42	38	100\\
42	39	100\\
42	40	100\\
42	41	100\\
42	42	100\\
42	43	100\\
42	44	100\\
42	45	100\\
42	46	100\\
42	47	100\\
42	48	100\\
42	49	100\\
42	50	100\\
42	51	100\\
43	1	0\\
43	2	0\\
43	3	0\\
43	4	0\\
43	5	0\\
43	6	0\\
43	7	0\\
43	8	0\\
43	9	0\\
43	10	0\\
43	11	0\\
43	12	0\\
43	13	0\\
43	14	0\\
43	15	0\\
43	16	0\\
43	17	0\\
43	18	0\\
43	19	0\\
43	20	0\\
43	21	0\\
43	22	0\\
43	23	0\\
43	24	10\\
43	25	10\\
43	26	10\\
43	27	10\\
43	28	10\\
43	29	10\\
43	30	10\\
43	31	10\\
43	32	10\\
43	33	10\\
43	34	10\\
43	35	10\\
43	36	10\\
43	37	100\\
43	38	100\\
43	39	100\\
43	40	100\\
43	41	100\\
43	42	100\\
43	43	100\\
43	44	100\\
43	45	100\\
43	46	100\\
43	47	100\\
43	48	100\\
43	49	100\\
43	50	100\\
43	51	100\\
44	1	0\\
44	2	0\\
44	3	0\\
44	4	0\\
44	5	0\\
44	6	0\\
44	7	0\\
44	8	0\\
44	9	0\\
44	10	0\\
44	11	0\\
44	12	0\\
44	13	0\\
44	14	0\\
44	15	0\\
44	16	0\\
44	17	0\\
44	18	0\\
44	19	0\\
44	20	0\\
44	21	0\\
44	22	0\\
44	23	0\\
44	24	10\\
44	25	10\\
44	26	10\\
44	27	10\\
44	28	10\\
44	29	10\\
44	30	10\\
44	31	10\\
44	32	10\\
44	33	10\\
44	34	10\\
44	35	10\\
44	36	10\\
44	37	10\\
44	38	100\\
44	39	100\\
44	40	100\\
44	41	100\\
44	42	100\\
44	43	100\\
44	44	100\\
44	45	100\\
44	46	100\\
44	47	100\\
44	48	100\\
44	49	100\\
44	50	100\\
44	51	100\\
45	1	0\\
45	2	0\\
45	3	0\\
45	4	0\\
45	5	0\\
45	6	0\\
45	7	0\\
45	8	0\\
45	9	0\\
45	10	0\\
45	11	0\\
45	12	0\\
45	13	0\\
45	14	0\\
45	15	0\\
45	16	0\\
45	17	0\\
45	18	0\\
45	19	0\\
45	20	0\\
45	21	0\\
45	22	0\\
45	23	0\\
45	24	10\\
45	25	10\\
45	26	10\\
45	27	10\\
45	28	10\\
45	29	10\\
45	30	10\\
45	31	10\\
45	32	10\\
45	33	10\\
45	34	10\\
45	35	10\\
45	36	10\\
45	37	10\\
45	38	100\\
45	39	100\\
45	40	100\\
45	41	100\\
45	42	100\\
45	43	100\\
45	44	100\\
45	45	100\\
45	46	100\\
45	47	100\\
45	48	100\\
45	49	100\\
45	50	100\\
45	51	100\\
46	1	0\\
46	2	0\\
46	3	0\\
46	4	0\\
46	5	0\\
46	6	0\\
46	7	0\\
46	8	0\\
46	9	0\\
46	10	0\\
46	11	0\\
46	12	0\\
46	13	0\\
46	14	0\\
46	15	0\\
46	16	0\\
46	17	0\\
46	18	0\\
46	19	0\\
46	20	0\\
46	21	0\\
46	22	0\\
46	23	0\\
46	24	10\\
46	25	10\\
46	26	10\\
46	27	10\\
46	28	10\\
46	29	10\\
46	30	10\\
46	31	10\\
46	32	10\\
46	33	10\\
46	34	10\\
46	35	10\\
46	36	10\\
46	37	10\\
46	38	100\\
46	39	100\\
46	40	100\\
46	41	100\\
46	42	100\\
46	43	100\\
46	44	100\\
46	45	100\\
46	46	100\\
46	47	100\\
46	48	100\\
46	49	100\\
46	50	100\\
46	51	100\\
47	1	0\\
47	2	0\\
47	3	0\\
47	4	0\\
47	5	0\\
47	6	0\\
47	7	0\\
47	8	0\\
47	9	0\\
47	10	0\\
47	11	0\\
47	12	0\\
47	13	0\\
47	14	0\\
47	15	0\\
47	16	0\\
47	17	0\\
47	18	0\\
47	19	0\\
47	20	0\\
47	21	0\\
47	22	0\\
47	23	0\\
47	24	10\\
47	25	10\\
47	26	10\\
47	27	10\\
47	28	10\\
47	29	10\\
47	30	10\\
47	31	10\\
47	32	10\\
47	33	10\\
47	34	10\\
47	35	10\\
47	36	10\\
47	37	10\\
47	38	100\\
47	39	100\\
47	40	100\\
47	41	100\\
47	42	100\\
47	43	100\\
47	44	100\\
47	45	100\\
47	46	100\\
47	47	100\\
47	48	100\\
47	49	100\\
47	50	100\\
47	51	100\\
48	1	0\\
48	2	0\\
48	3	0\\
48	4	0\\
48	5	0\\
48	6	0\\
48	7	0\\
48	8	0\\
48	9	0\\
48	10	0\\
48	11	0\\
48	12	0\\
48	13	0\\
48	14	0\\
48	15	0\\
48	16	0\\
48	17	0\\
48	18	0\\
48	19	0\\
48	20	0\\
48	21	0\\
48	22	0\\
48	23	0\\
48	24	10\\
48	25	10\\
48	26	10\\
48	27	10\\
48	28	10\\
48	29	10\\
48	30	10\\
48	31	10\\
48	32	10\\
48	33	10\\
48	34	10\\
48	35	10\\
48	36	10\\
48	37	10\\
48	38	100\\
48	39	100\\
48	40	100\\
48	41	100\\
48	42	100\\
48	43	100\\
48	44	100\\
48	45	100\\
48	46	100\\
48	47	100\\
48	48	100\\
48	49	100\\
48	50	100\\
48	51	100\\
49	1	0\\
49	2	0\\
49	3	0\\
49	4	0\\
49	5	0\\
49	6	0\\
49	7	0\\
49	8	0\\
49	9	0\\
49	10	0\\
49	11	0\\
49	12	0\\
49	13	0\\
49	14	0\\
49	15	0\\
49	16	0\\
49	17	0\\
49	18	0\\
49	19	0\\
49	20	0\\
49	21	0\\
49	22	0\\
49	23	0\\
49	24	10\\
49	25	10\\
49	26	10\\
49	27	10\\
49	28	10\\
49	29	10\\
49	30	10\\
49	31	10\\
49	32	10\\
49	33	10\\
49	34	10\\
49	35	10\\
49	36	10\\
49	37	10\\
49	38	100\\
49	39	100\\
49	40	100\\
49	41	100\\
49	42	100\\
49	43	100\\
49	44	100\\
49	45	100\\
49	46	100\\
49	47	100\\
49	48	100\\
49	49	100\\
49	50	100\\
49	51	100\\
50	1	0\\
50	2	0\\
50	3	0\\
50	4	0\\
50	5	0\\
50	6	0\\
50	7	0\\
50	8	0\\
50	9	0\\
50	10	0\\
50	11	0\\
50	12	0\\
50	13	0\\
50	14	0\\
50	15	0\\
50	16	0\\
50	17	0\\
50	18	0\\
50	19	0\\
50	20	0\\
50	21	0\\
50	22	0\\
50	23	0\\
50	24	10\\
50	25	10\\
50	26	10\\
50	27	10\\
50	28	10\\
50	29	10\\
50	30	10\\
50	31	10\\
50	32	10\\
50	33	10\\
50	34	10\\
50	35	10\\
50	36	10\\
50	37	10\\
50	38	100\\
50	39	100\\
50	40	100\\
50	41	100\\
50	42	100\\
50	43	100\\
50	44	100\\
50	45	100\\
50	46	100\\
50	47	100\\
50	48	100\\
50	49	100\\
50	50	100\\
50	51	100\\
51	1	0\\
51	2	0\\
51	3	0\\
51	4	0\\
51	5	0\\
51	6	0\\
51	7	0\\
51	8	0\\
51	9	0\\
51	10	0\\
51	11	0\\
51	12	0\\
51	13	0\\
51	14	0\\
51	15	0\\
51	16	0\\
51	17	0\\
51	18	0\\
51	19	0\\
51	20	0\\
51	21	0\\
51	22	0\\
51	23	0\\
51	24	10\\
51	25	10\\
51	26	10\\
51	27	10\\
51	28	10\\
51	29	10\\
51	30	10\\
51	31	10\\
51	32	10\\
51	33	10\\
51	34	10\\
51	35	10\\
51	36	10\\
51	37	10\\
51	38	100\\
51	39	100\\
51	40	100\\
51	41	100\\
51	42	100\\
51	43	100\\
51	44	100\\
51	45	100\\
51	46	100\\
51	47	100\\
51	48	100\\
51	49	100\\
51	50	100\\
51	51	100\\
};
\end{axis}
\end{tikzpicture}%